\documentclass[oneside,11pt]{article}
\usepackage{amsmath,amssymb,amsthm,amsfonts}

\usepackage{authblk}          % author / affiliation
\usepackage[mathscr]{eucal}   % script math font

\usepackage[numbers,sort&compress]{natbib}

\usepackage[
  colorlinks=true,
  linkcolor=blue,
  citecolor=blue,
  urlcolor=blue,
  anchorcolor=blue,
  pdfborder={0 0 0},
  plainpages=false,
  pdfpagelabels
]{hyperref}

\theoremstyle{plain}
\newtheorem{theorem}{Theorem}[section]
\newtheorem{proposition}[theorem]{Proposition}

\newtheorem{corollary}[theorem]{Corollary}

\theoremstyle{definition}
\newtheorem{remark}[theorem]{Remark}
\theoremstyle{plain}
\newcommand{\bracenom}[2]{\genfrac\{\}{0pt}{}{#1}{#2}}
\newtheorem{theoremA}{Theorem}

\let\s\sigma          % sigma_n, sigma_n(t)
\def\D{\Delta}        % forward difference operator
\date{}

\begin{document}

	\begin{NoHyper}
		\let\thefootnote\relax\footnotetext{\\
			\noindent{\em Keywords}: Comtet numbers of two kinds; Lehmer numbers; (weighted) Stirling polynomials and numbers of two kinds; partial Bell polynomials; recurrence relations; convolution identities; hyperharmonic polynomials \\
			{\em Mathematics Subject Classification}  2020: Primary 11B73; Secondary 05A15, 05A19, 11B83 
			\\
			{\rm E-mail}: stj@inha.ac.kr}  
   %\\
%			{${}^2$\rm E-mail}: 22151060@inha.edu}
	\end{NoHyper}
	
	\author{Sangtae Jeong}
	\affil{Department of Mathematics, Inha University, Incheon 22212, Republic of Korea}

	\title {{\bf On a Question of Lehmer concerning the Comtet Numbers}}
	
	\maketitle

	\begin{abstract}
The Comtet numbers $b(n,k)$ and $B(n,k)$ of the first and second kind arise from the powers of
$(1+x)\log(1+x)$ and of its compositional inverse, respectively. By interpreting both families as
special values of weighted Stirling polynomials, we answer Lehmer's question of whether $B(n,k)$
satisfies a recurrence with a fixed number of terms by proving the four-term recurrence
\[
  k\,B(n+1,k+1)=B(n,k-1)-(n-k)\,B(n,k)-B(n+1,k).
\]
The
corresponding relation for the \emph{unsigned} array was conjectured by M.~Kurkov, but not proved, in
OEIS A354794. We further derive explicit formulas, convolution identities, and differential--difference
relations for the Touchard-type polynomials attached to the two families.
	\end{abstract}

\section{Introduction}

In 1974, Comtet \cite{Comt} introduced the numbers $b(n,k)$, defined for $k\ge1$ by
\begin{equation}
    \sum_{n\ge1} b(n,k)\frac{x^n}{n!}
    =
    \frac{\bigl[(1+x)\log(1+x)\bigr]^k}{k!}.
    \label{EGF-bnk}
\end{equation}
These numbers arise naturally in the formula for the $n$th derivative of $x^x$:
\[
D^n x^x
=
x^x
\sum_{j=0}^{n}
(\log x)^j
\binom{n}{j}
\sum_{h=0}^{n-j}
b(n-j,n-j-h)x^{-h};
\]
see also Gould \cite{Gou}, who studied a set of polynomials attached to these derivatives.

In 1985, Lehmer \cite{Leh} introduced a second family of numbers $B(n,k)$, again for $k\ge1$, by means of the exponential generating function
\begin{equation}
    \sum_{n\ge1}
    B(n,k)\frac{x^n}{n!}
    =
    \frac{\psi(x)^k}{k!},
    \label{EGF-Bnk}
\end{equation}
where
\begin{equation}
    \psi(x)
    =
    \sum_{\nu\ge1}
    (-1)^{\nu-1}
    (\nu-1)^{\nu-1}
    \frac{x^\nu}{\nu!},
\end{equation}
which is the compositional inverse of the function $(1+x)\log(1+x)$.

Throughout this paper we adopt the conventions
\[
b(n,0)=B(n,0)=\delta_{n,0},
\qquad
b(n,k)=B(n,k)=0
\quad (k>n),
\]
where $\delta_{n,0}$ denotes the Kronecker delta.

Lehmer called the numbers $b(n,k)$ and $B(n,k)$ the \emph{Comtet numbers of the first kind} and the \emph{Comtet numbers of the second kind}, respectively, in analogy with the classical Stirling numbers of the first and second kinds.

As with the Stirling numbers, the two Comtet families are orthogonal:
\[
\sum_{k=1}^{n}
b(n,k)B(k,r)
=
\delta_{n,r}
=
\sum_{k=1}^{n}
B(n,k)b(k,r).
\]

Lehmer also investigated the Taylor expansion of $(1+x)^{1+x}$, proving that
\begin{equation}
(1+x)^{1+x}
=
\sum_{n\ge0}
\sigma_n
\frac{x^n}{n!},
\label{def-L-x2x}
\end{equation}
where $\sigma_0=\sigma_1=1$ and, for $n\ge2$,
\begin{equation}
\sigma_n
=
n!
\sum_{k=1}^{\lfloor n/2\rfloor}
\frac{
s(n-k-1,k)+s(n-k-1,k-1)
}
{(n-k)!},
\label{def-sigma-n}
\end{equation}
with $s(n,k)$ denoting the signed Stirling numbers of the first kind, defined later in \eqref{defsnk}.

For the numbers of the first kind, Comtet \cite{Comt} established the four-term recurrence
\begin{equation}
b(n+1,k)
=
nb(n-1,k-1)
+b(n,k-1)
-(n-k)b(n,k),
\label{RR-b-intro}
\end{equation}
which Lehmer \cite[Equation (23)]{Leh} later rederived directly from the exponential generating function.

Lehmer \cite[p.~473]{Leh} subsequently asked whether the numbers $B(n,k)$ of the second kind also satisfy a recurrence involving only a \emph{fixed} number of terms. To the best of our knowledge, this question has remained unanswered. Our first main theorem answers it in the affirmative.

\begin{theoremA}[Theorem~\ref{thm-main-B}]\label{thmA}
For all $n\ge k\ge1$,
\begin{equation}
kB(n+1,k+1)
=
B(n,k-1)
-(n-k)B(n,k)
-B(n+1,k).
\label{RR-B-intro}
\end{equation}
\end{theoremA}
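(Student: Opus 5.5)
The plan is to prove \eqref{RR-B-intro} at the level of exponential generating functions. The only input is the functional equation for $\psi$, namely that it is the compositional inverse of $(1+x)\log(1+x)$.

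Put $F_k(x)=\psi(x)^k/k!=\sum_{n}B(n,k)x^n/n!$ for $k\ge0$, so that $F_0=1$, consistent with $B(n,0)=\delta_{n,0}$. Each term of the recurrence corresponds to a standard operation on EGFs:
\[
\sum_{n\ge0}B(n+1,k)\frac{x^n}{n!}=F_k',
\qquad
\sum_{n\ge0}nB(n,k)\frac{x^n}{n!}=xF_k'.
\]
Hence the generating function of $(n-k)B(n,k)$ is $xF_k'-kF_k$. The recurrence, for all $n\ge0$, is therefore equivalent to the identity of formal power series
\[
kF_{k+1}'=F_{k-1}-xF_k'+kF_k-F_k'.
\]
This includes the range $n\ge k\ge1$ of the statement. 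For $n<k$ both sides vanish anyway, by the conventions $B(n,k)=0$ for $k>n$.

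Next I substitute $F_{k+1}'=\psi^k\psi'/k!$ and $F_k'=\psi^{k-1}\psi'/(k-1)!$, and multiply through by $(k-1)!/\psi^{k-1}$. This is legitimate as an identity of Laurent series, or one can simply compare both sides after factoring out $\psi^{k-1}/(k-1)!$. The identity then reduces to the single $k$-independent equation
\[
\psi\psi'=1-x\psi'+\psi-\psi',
\qquad\text{i.e.}\qquad
\psi'(x)\,\bigl(1+x+\psi(x)\bigr)=1+\psi(x).
\]

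To verify this, I start from $(1+\psi)\log(1+\psi)=x$ and differentiate, which gives $\psi'\bigl(\log(1+\psi)+1\bigr)=1$. By the functional equation itself, $\log(1+\psi)=x/(1+\psi)$, so $\psi'\bigl(x+1+\psi\bigr)=1+\psi$. This is exactly the required equation, and extracting the coefficients of $x^n/n!$ then yields \eqref{RR-B-intro}.

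The only delicate point is bookkeeping at the boundary. For $k=1$ the term $F_0=1$ contributes $\delta_{n,0}$. This matches the term $B(n,0)$ under the stated convention, and the reduction step must multiply by $(k-1)!/\psi^{k-1}=1$ there, which is harmless. I expect no real obstacle beyond this: the key step is recognizing that the differential equation $\psi'(1+x+\psi)=1+\psi$ encodes the recurrence.
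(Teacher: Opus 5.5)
Your proof is correct, and it takes a genuinely different route from the paper's.

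The paper never works with $\psi$ directly. It uses the specialization $G_{n,k}(-n)=k!\,B(n+1,k+1)$ of \eqref{def-Gnk-Com2} and computes $G_{n+1,k}(-n)$ in two ways. One comes from the three-term recurrence \eqref{RR-Gnkx} at $x=-n$, the other from the difference formula \eqref{D-Gnr} at $x=-n-1$; equating them and shifting $(n,k)\mapsto(n-1,k-1)$ gives the result. This places the recurrence inside the weighted-Stirling framework and parallels the paper's derivation of Comtet's recurrence \eqref{RR-b-intro} from $s_{n,k}(x)$. However, it depends on \eqref{def-Gnk-Com2}, which the paper obtains from Lehmer's polynomial identity \eqref{Leh-poly}, imported from Lehmer's paper rather than proved.

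Your argument needs only the defining relation $(1+\psi)\log(1+\psi)=x$, and it gives the identity for every $n\ge0$ at once. It also shows why a fixed-length recurrence exists at all. The function $\psi$ satisfies a first-order relation $\psi'(1+x+\psi)=1+\psi$ that is linear in $\psi'$ with coefficients polynomial in $x$ and $\psi$. Multiplying such a relation by $\psi^{k-1}/(k-1)!$ turns it into a fixed-length linear relation among the $F_j$ and $F_j'$.

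The same computation explains the first-kind recurrence. The function $\phi=(1+x)\log(1+x)$ satisfies $(1+x)\phi'=1+x+\phi$, which is exactly your equation read through the inverse function. Applying your method to $\phi^k/k!$ yields \eqref{RR-b-intro}, so both Comtet recurrences come from one differential relation.

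One small inaccuracy concerns your side remark that for $n<k$ both sides vanish ``by the conventions''. When $n=k-1$, the right-hand terms $B(k-1,k-1)=B(k,k)=1$ cancel rather than vanish individually. The remark is superfluous anyway, since your power-series identity already covers all $n\ge0$.
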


The triangle $B(n,k)$ was tabulated by Lehmer \cite{Leh} and appears in the
\emph{On-Line Encyclopedia of Integer Sequences}
as sequence A039621 \cite{OEIS39621}, together with its unsigned counterpart A354794 \cite{OEIS354794}. Writing $T(n,k)=(-1)^{\,n-k}B(n,k)$ for the unsigned array, the recurrence \eqref{RR-B-intro} is equivalent to
\begin{equation}\label{RR-T-intro}
T(n+1,k)
=
kT(n+1,k+1)+T(n,k-1)+(n-k)T(n,k),
\end{equation}
in which every term is nonnegative, reflecting a combinatorial interpretation of $T(n,k)$ recorded in A354794 and discussed in Section~\ref{sec:Bnk}; the recurrence \eqref{RR-T-intro} was conjectured in A354794 by M.~Kurkov (20 April 2026) but left unproved. On the same page, P.~Luschny (8 May 2026) observed a Lambert--$W$ row-polynomial convolution and remarked, without proof, that the corresponding coefficient recurrence follows from it.

The aim of this paper is to give a unified account of the Comtet numbers of both kinds. Our principal result answers Lehmer's question, establishing the fixed-term recurrence of Theorem~\ref{thmA} for $B(n,k)$ and thereby resolving the conjectured recurrence \eqref{RR-T-intro} for $T(n,k)$. Beyond this, we investigate the combinatorial properties of the two families and derive a range of identities for them---explicit formulas, convolution identities, and further recurrences, together with differential--difference relations for the associated Touchard-type polynomials and partial Bell polynomial representations.

Our approach places both families within the framework of weighted Stirling polynomials. Specifically, $b(n,k)$ is realized as a special value of the weighted Stirling polynomial $s_{n,k}(x)$ of the first kind, while $B(n,k)$ is obtained as a special value of the corresponding weighted Stirling polynomial $G_{n,k}(x)$ of the second kind. The recurrence in Theorem~\ref{thmA} follows from two elementary identities satisfied by $G_{n,k}(x)$. The same correspondence also yields a second proof of Comtet's recurrence \eqref{RR-b-intro}, together with explicit formulas and convolution identities for both families.

The remainder of the paper is organized as follows. Section~\ref{sec:wsp} reviews the weighted Stirling polynomials required in the sequel; complete proofs of the identities quoted there are deferred to Appendix~\ref{app:proofs}, so that the paper is self-contained. Section~\ref{sec:comtet} develops the corresponding properties of the Comtet numbers. Theorem~\ref{thmA} is proved in Section~\ref{sec:rec}. In Section~\ref{sec:bell}, we study the Touchard-type polynomials $\sigma_n(t)$ and $\Sigma_n(t)$ attached to the two families and recover the partial Bell polynomial representations of the Comtet numbers of both kinds.

\section{Weighted Stirling polynomials and numbers of two kinds}
\label{sec:wsp}

In this section we briefly review the properties of the weighted Stirling polynomials and numbers of the first and second kinds that will be used throughout the paper. We also collect several auxiliary facts concerning the coefficients $c(n,k)$ arising in the expansion of the binomial polynomial $\binom{x+1}{n}$ in powers of $x$.

\subsection{Signed weighted Stirling polynomials and numbers of the first kind}

The signed Stirling numbers of the first kind are defined by the expansion of the falling factorial
\begin{equation}
(y)_n
=
\sum_{k=0}^{n}
s(n,k)y^k,
\label{defsnk}
\end{equation}
where
\[
(y)_n=y(y-1)\cdots(y-n+1),\qquad (y)_0:=1.
\]

More generally, expanding the two-variable falling factorial $(x+y)_n$ with respect to the variable $y$ gives
\begin{equation}
(x+y)_n
=
\sum_{k=0}^{n}
s_{n,k}(x)y^k,
\label{defsnkx}
\end{equation}
which uniquely defines a polynomial $s_{n,k}(x)\in\mathbb{C}[x]$ for each pair $(n,k)$.
Setting $x=0$ in \eqref{defsnkx} and comparing with \eqref{defsnk} immediately yields
\[
s_{n,k}(0)=s(n,k),
\qquad
s_{n,0}(x)=(x)_n.
\]
Thus the polynomials $s_{n,k}(x)$ constitute a natural polynomial extension of the signed Stirling numbers of the first kind, and will be referred to as the \emph{signed weighted Stirling polynomials of the first kind}.

We conclude this subsection by collecting the properties of $s_{n,k}(x)$ that will be needed in the subsequent sections.
\begin{theorem}\label{BPsnkx}
(i) Addition formulas:
\begin{eqnarray}
    s_{n,k}(x+y) &=& \sum_{j=k}^{n} \binom{j}{k} s_{n,j}(y) x^{j-k};\label{AFsnkx} \\
    s_{n,k}(x+y) &=& \sum_{j=k}^{n} \binom{n}{j} s_{j,k}(y) (x)_{n-j}.\label{AFsnkx2}
\end{eqnarray}
(ii) Explicit expressions:
\begin{eqnarray}
    s_{n,k}(x) &=&\sum_{j=k}^{n} \binom{j}{k}s(n,j)x^{j-k};\label{EEsnk1} \\
    s_{n,k}(y) &=&\sum_{j=k}^{n} \binom{n}{j}s(j,k)(y)_{n-j}.\label{EEsnk2}
\end{eqnarray}
(iii) Recurrence relations:
\begin{equation} \label{RRsnkx}
    s_{n+1,k}(x) = (x-n) s_{n,k}(x)+ s_{n,k-1}(x).
\end{equation}
\begin{equation} \label{RRsnk}
    s(n+1,k) = -n\,s(n,k)+ s(n,k-1).
\end{equation}
(iv) Exponential generating function:
 \begin{equation}\label{EGFsnkx}
    \sum_{n\geq k} s_{n,k}(x) \frac{t^n}{n!}=(1+t)^x \frac{(\ln(1+t))^k}{k!}.
\end{equation}
(v) Double generating function:
\begin{equation} \label{DGFsnkx}
    \sum_{k\geq 0}\sum_{n\geq k} s_{n,k}(x) \frac{t^n}{n!} y^k=(1+t)^{x+y}.
\end{equation}
(vi) Convolution identity:
\begin{equation}  \label{CFsnkx}
  \binom{k+j}{k}s_{n,k+j}(x+y) =  \sum_{r=k}^{n-j}  \binom{n}{r}s_{r,k}(x)s_{n-r, j}(y).
\end{equation}
\end{theorem}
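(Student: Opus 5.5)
All six parts follow from the defining expansion \eqref{defsnkx}, $(x+y)_n=\sum_k s_{n,k}(x)y^k$, which fixes each $s_{n,k}(x)$ uniquely as a coefficient in $y$. I would establish the parts in a slightly different order from the statement: first the generating functions (iv) and (v), then (i)–(ii), then (iii), and finally the convolution (vi).

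\textbf{Generating functions and addition formulas.} For (v), I would apply the binomial series $\sum_n (x+y)_n t^n/n! = (1+t)^{x+y}$ and substitute \eqref{defsnkx}. For (iv), I would write $(1+t)^{x+y}=(1+t)^x e^{y\ln(1+t)}$, expand the second factor as $\sum_k y^k(\ln(1+t))^k/k!$, and compare coefficients of $y^k$.

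For \eqref{AFsnkx}, I would compute $(x+y+z)_n$ in two ways. Grouping it as $((x+y)+z)_n$ gives $\sum_k s_{n,k}(x+y)z^k$. Grouping it as $(y+(x+z))_n$ gives $\sum_j s_{n,j}(y)(x+z)^j$, and I would expand $(x+z)^j$ by the binomial theorem. Comparing coefficients of $z^k$ then yields \eqref{AFsnkx}. For \eqref{AFsnkx2}, I would use the Vandermonde identity $(x+(y+z))_n=\sum_j\binom{n}{j}(y+z)_j(x)_{n-j}$ and again compare coefficients of $z^k$. Part (ii) is the specialization $y=0$, using $s_{n,j}(0)=s(n,j)$; in \eqref{EEsnk2} I would also rename $x$ as $y$.

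\textbf{Recurrence.} For \eqref{RRsnkx}, I would start from $(x+y)_{n+1}=(x+y)_n\,(x+y-n)$. Expanding the right side with \eqref{defsnkx} gives $\sum_k s_{n,k}(x)\bigl((x-n)y^k+y^{k+1}\bigr)$, and comparing coefficients of $y^k$ gives the recurrence. Setting $x=0$ then yields \eqref{RRsnk}.

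\textbf{Convolution.} For (vi), I would use (iv) in the form $(1+t)^{x+y}\,(\ln(1+t))^{k+j}/(k!\,j!) = \bigl[(1+t)^x(\ln(1+t))^k/k!\bigr]\bigl[(1+t)^y(\ln(1+t))^j/j!\bigr]$. The left side equals $\binom{k+j}{k}\sum_n s_{n,k+j}(x+y)t^n/n!$. Multiplying the two exponential generating functions on the right and extracting the coefficient of $t^n/n!$ gives $\sum_r\binom{n}{r}s_{r,k}(x)s_{n-r,j}(y)$. The summation range $k\le r\le n-j$ comes from the vanishing of $s_{r,k}$ for $r<k$.

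None of these steps is a real obstacle. The only points needing care are the index bookkeeping, namely the vanishing conventions $s_{n,k}=0$ for $k>n$ that justify the stated summation limits, and confirming that the binomial expansion of $(1+t)^{x}$ holds as a formal power series identity in $t$ with polynomial coefficients in $x$.
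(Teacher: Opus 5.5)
Your proposal is correct and follows essentially the same route as the paper's proof in Appendix~\ref{app:first}. The paper also obtains (v) from Newton's binomial series and (iv) from $(1+t)^{x+y}=(1+t)^x e^{y\ln(1+t)}$. It likewise proves the addition formulas by expanding $(x+y+z)_n$ in two groupings (binomial theorem, resp.\ Vandermonde), gets (ii) by setting $y=0$, derives the recurrence from $(x+y)_{n+1}=(x+y-n)(x+y)_n$, and obtains the convolution as a Cauchy product of the generating functions in (iv); only your ordering of the parts differs.
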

\begin{proof}
Complete proofs are given in Appendix~\ref{app:first}; each identity is a short consequence of the
exponential generating function \eqref{EGFsnkx}, which is itself read off from the definition
\eqref{defsnkx}. These identities also appear in \cite{J-St1}, and several of them, in related forms,
in the earlier works of Carlitz
\cite{Car80a,Car80b}, Broder \cite{Bro}, and Koutras \cite{Kou}.
\end{proof}

The next theorem states the differentiation and difference formulas for $s_{n,k}(x)$.
\begin{theorem}\label{DG-SF-snkx}
(i) Differentiation formula:
\begin{equation} \label{DFsnkx}
    D s_{n,k}(x) =(k+1)s_{n,k+1}(x);\quad \quad  D^r s_{n,k}(x) =(k+1)^{\overline{r}}s_{n,k+r}(x),
\end{equation}
where $D=\frac{d}{dx}$ and $k^{\overline{r}}$ is the rising factorial.

(ii) Difference formula:
\begin{equation} \label{SFsnkx}
    \D s_{n,k}(x) =ns_{n-1,k}(x);\quad \quad  \D^r s_{n,k}(x) =(n)_rs_{n-r,k}(x),
\end{equation}
where $\D f(x)=f(x+1)-f(x)$ and $(n)_r$ is the falling factorial.
\end{theorem}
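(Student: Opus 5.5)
Both formulas follow from the defining expansion \eqref{defsnkx}, $(x+y)_n=\sum_{k} s_{n,k}(x)y^k$, by applying an operator in $x$ to both sides and comparing coefficients of $y^k$. Alternatively, one could differentiate or difference the exponential generating function \eqref{EGFsnkx}. I will use the definition, since it keeps the argument polynomial.

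\emph{Differentiation formula.} The left side of \eqref{defsnkx} depends on $x$ and $y$ only through $x+y$, so $D_x (x+y)_n = D_y (x+y)_n$. Applying $D_x$ to the right side gives $\sum_k D s_{n,k}(x)\,y^k$. Applying $D_y$ gives
\[
\sum_k k\,s_{n,k}(x)y^{k-1}=\sum_k (k+1)s_{n,k+1}(x)y^k .
\]
Comparing coefficients of $y^k$ yields $Ds_{n,k}(x)=(k+1)s_{n,k+1}(x)$. The $r$-fold version follows by induction on $r$:
\[
D^{r+1}s_{n,k}=(k+1)^{\overline r}\,D s_{n,k+r}=(k+1)^{\overline r}(k+r+1)s_{n,k+r+1}=(k+1)^{\overline{r+1}}s_{n,k+r+1}.
\]
Here $s_{n,j}=0$ for $j>n$, which is consistent with \eqref{defsnkx}.

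\emph{Difference formula.} The key identity is the standard falling-factorial difference relation
\[
(z+1)_n-(z)_n=n(z)_{n-1},
\]
which holds because $(z+1)_n-(z)_n=(z)_{n-1}\bigl[(z+1)-(z-n+1)\bigr]$. Put $z=x+y$. Applying $\Delta$ in $x$ to \eqref{defsnkx} then gives
\[
\sum_k \Delta s_{n,k}(x)\,y^k=n(x+y)_{n-1}=n\sum_k s_{n-1,k}(x)y^k .
\]
Comparing coefficients gives $\Delta s_{n,k}(x)=ns_{n-1,k}(x)$. Iterating, $\Delta^r s_{n,k}=n(n-1)\cdots(n-r+1)s_{n-r,k}=(n)_r s_{n-r,k}$, by induction on $r$. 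When $r>n$ this is consistent, since $(n)_r=0$.

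There is no real obstacle. The only points needing care are the index conventions: $s_{n,k}=0$ for $k>n$ or $k<0$, and the shift of the summation index in the $D_y$ computation.
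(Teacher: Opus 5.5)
Your proposal is correct and follows essentially the same route as the paper: you use $\partial_x=\partial_y$ on $(x+y)_n$ to raise the lower index, and $\Delta_x(x+y)_n=n\,(x+y)_{n-1}$ for the difference formula, each followed by iteration. Your explicit check of the falling-factorial difference identity and of the induction step $(k+1)^{\overline r}(k+r+1)=(k+1)^{\overline{r+1}}$ simply fills in details the paper leaves implicit.
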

\begin{proof}
Both formulas follow from the defining relation \eqref{defsnkx}. Since $(x+y)_n$ depends only on
$x+y$, the operators $D=\partial_x$ and $\partial_y$ agree on it; as $\partial_y$ raises the lower
index, $D s_{n,k}(x)=(k+1)s_{n,k+1}(x)$, and iteration gives the $D^r$ form. Likewise
$\D_x(x+y)_n=n\,(x+y)_{n-1}$ gives $\D s_{n,k}(x)=n\,s_{n-1,k}(x)$, and iteration gives the $\D^r$ form.
\end{proof}

We close this subsection with a further recurrence for $s_{n,k}(x)$, its value at $x=1$, and the
classical inversion formula for the Stirling numbers of the first kind.
\begin{proposition}
For $k\ge1$,
\begin{eqnarray}
 s_{n,k}(x) &=& x\,s_{n-1,k}(x-1) + s_{n-1,k-1}(x-1);\label{IFHRRsnk-b}\\
 s_{n,k}(1) &=& s(n-1,k) + s(n-1,k-1).\label{IFHRRsnk-c}
\end{eqnarray}
Moreover, for $k\ge0$, the classical inversion formula
\begin{equation}\label{IFHRRsnk}
 s_{n,k}(-1)=\sum_{j=k}^n (-1)^{j-k}\binom{j}{k} s(n,j)= s(n+1,k+1)
\end{equation}
holds.
\end{proposition}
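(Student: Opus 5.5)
The plan is to derive all three identities directly from the defining expansion \eqref{defsnkx}, using nothing beyond elementary factorizations of the falling factorial.

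For \eqref{IFHRRsnk-b}, write
\[
(x+y)_n=(x+y)\,(x-1+y)_{n-1}.
\]
Expanding $(x-1+y)_{n-1}=\sum_j s_{n-1,j}(x-1)y^j$ by \eqref{defsnkx} and multiplying by $x+y$ gives
\[
(x+y)_n=\sum_j\bigl[x\,s_{n-1,j}(x-1)+s_{n-1,j-1}(x-1)\bigr]y^j .
\]
Comparing coefficients of $y^k$ for $k\ge1$ yields \eqref{IFHRRsnk-b}. Setting $x=1$ and using $s_{n-1,j}(0)=s(n-1,j)$ then gives \eqref{IFHRRsnk-c} immediately.

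For \eqref{IFHRRsnk}, the middle expression is exactly \eqref{EEsnk1} evaluated at $x=-1$, so only the final equality needs proof. We use the factorization
\[
(y-1)_n=\frac{(y)_{n+1}}{y}.
\]
By \eqref{defsnk}, $(y)_{n+1}=\sum_{k\ge0}s(n+1,k+1)y^{k+1}$, because $s(n+1,0)=0$ for $n\ge0$. Dividing by $y$ gives
\[
(y-1)_n=\sum_{k\ge0}s(n+1,k+1)\,y^k .
\]
On the other hand, \eqref{defsnkx} with $x=-1$ gives $(y-1)_n=\sum_k s_{n,k}(-1)y^k$. Comparing coefficients proves $s_{n,k}(-1)=s(n+1,k+1)$.

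Almost nothing here is an obstacle. The only points needing care are the boundary index conventions: the restriction $k\ge1$ in the first identity, since the $j=k-1$ term is absent when $k=0$, and the vanishing of $s(n+1,0)$, which is what makes the division by $y$ legitimate.
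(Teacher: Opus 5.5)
Your proof is correct and follows the paper's route. Both arguments rest on the factorization $(x)_n=x\,(x-1)_{n-1}$: the paper extracts coefficients by applying $D^k/k!$ with the Leibniz rule and \eqref{DFsnkx}, while you compare coefficients of $y^k$ in $(x+y)_n=(x+y)(x-1+y)_{n-1}$ directly; these are equivalent because $(x+y)_n$ depends only on $x+y$. Likewise, your relation $(y)_{n+1}=y\,(y-1)_n$ is the $x=0$ case of that same factorization, which is exactly how the paper obtains $s_{n,k}(-1)=s(n+1,k+1)$, by setting $x=0$ in \eqref{IFHRRsnk-b} and shifting indices.
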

\begin{proof}
To prove \eqref{IFHRRsnk-b}, we start with the falling-factorial identity
\[
 s_{n,0}(x)=x\,s_{n-1,0}(x-1),
\]
that is, $(x)_{n}=x\,(x-1)_{n-1}$. Applying the Leibniz rule for the derivative $D^{k}$ to this
identity, together with \eqref{DFsnkx}, we obtain
\[
   k!\, s_{n,k}(x)=x\,k!\, s_{n-1,k}(x-1)+k!\, s_{n-1,k-1}(x-1),
\]
and dividing by $k!$ gives \eqref{IFHRRsnk-b}. Setting $x=1$ in \eqref{IFHRRsnk-b} and using
$s_{n-1,k}(0)=s(n-1,k)$ and $s_{n-1,k-1}(0)=s(n-1,k-1)$ yields \eqref{IFHRRsnk-c}. Finally, setting
$x=0$ in \eqref{IFHRRsnk-b} and using $s_{n,k}(0)=s(n,k)$ gives $s(n,k)=s_{n-1,k-1}(-1)$; replacing
$(n,k)$ by $(n+1,k+1)$ then yields $s(n+1,k+1)=s_{n,k}(-1)$, while the summation expression follows
from \eqref{EEsnk1} at $x=-1$. This is the classical inversion formula; see
\cite[p.~265, Eq.~(6.21)]{GKP} and \cite[Theorem~8.8, Eq.~(8.30)]{Ch}.
\end{proof}

\subsection{Unsigned weighted Stirling polynomials and numbers of the first kind}

As with the polynomials $s_{n,k}(x)$, define the unsigned weighted Stirling polynomials of the first kind by
\begin{equation}\label{defrnkx}
(x+y)^{\overline n}
=
\sum_{k=0}^{n}
r_{n,k}(x)y^k,
\end{equation}
where
\[
(x+y)^{\overline n}
=(x+y)(x+y+1)\cdots(x+y+n-1),\qquad (x+y)_0:=1.
\]
The coefficients $r_{n,k}(x)\in\mathbb{C}[x]$ are uniquely determined. Setting $x=0$ gives
\begin{equation}\label{defrnk}
y^{\overline n}
=
\sum_{k=0}^{n}
r(n,k)y^k,
\end{equation}
so that
\[
r(n,k)=r_{n,k}(0),
\qquad
r_{n,0}(x)=x^{\overline n}.
\]
Thus $\{r_{n,k}(x)\}$ is a polynomial extension of the unsigned Stirling numbers of the first kind.

Replacing $(x,y)$ by $(-x,-y)$ in \eqref{defsnkx} gives
\[
(x+y)^{\overline n}
=
\sum_{k=0}^{n}
(-1)^{\,n+k}s_{n,k}(-x)y^k.
\]
Comparison with \eqref{defrnkx} yields
\begin{equation}\label{RSR}
r_{n,k}(x)
=
(-1)^{\,n+k}s_{n,k}(-x),
\end{equation}
and hence
\begin{equation}\label{R-rnksnk}
r(n,k)
=
(-1)^{\,n+k}s(n,k).
\end{equation}

The polynomials $r_{n,k}(x)$ satisfy identities entirely analogous to those of the signed weighted
Stirling polynomials in Theorem~\ref{BPsnkx}.
\begin{theorem}\label{BPrnkx}
(i) Addition formulas:
\begin{eqnarray}
    r_{n,k}(x+y) &=&\sum_{j=k}^{n} \binom{j}{k} r_{n,j}(y) x^{j-k};\label{AFrnkx} \\
    r_{n,k}(x+y) &=&\sum_{j=k}^{n} \binom{n}{j} r_{j,k}(y) x^{\overline{n-j}}.\label{AFrnkx2}
\end{eqnarray}
(ii) Explicit expression:
\begin{equation} \label{EErnk1}
    r_{n,k}(x) =\sum_{j=k}^{n} \binom{j}{k}r(n,j)x^{j-k}.
\end{equation}
(iii) Recurrence relations:
\begin{equation} \label{RRrnkx}
    r_{n+1,k}(x) = (n+x) r_{n,k}(x)+ r_{n,k-1}(x).
\end{equation}
\begin{equation} \label{RRrnk}
     r({n+1,k}) = n r({n,k})+ r({n,k-1}).
\end{equation}
(iv) Exponential generating function:
 \begin{equation}\label{EGFrnkx}
    \sum_{n\geq k} r_{n,k}(x) \frac{t^n}{n!}=(1-t)^{-x} \frac{(-\ln(1-t))^k}{k!}.
\end{equation}
(v) Double generating function:
\begin{equation} \label{DGFrnkx}
    \sum_{k\geq 0}\sum_{n\geq k} r_{n,k}(x) \frac{t^n}{n!} y^k=(1-t)^{-x-y}.
\end{equation}
(vi) Convolution identity:
\begin{equation} \label{CFrnkx}
  \binom{k+j}{k}r_{n,k+j}(x+y) =  \sum_{r=k}^{n-j}  \binom{n}{r}r_{r,k}(x)r_{n-r, j}(y).
\end{equation}
\end{theorem}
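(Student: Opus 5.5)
The cleanest route is to transfer Theorem~\ref{BPsnkx} through the sign relation \eqref{RSR}, $r_{n,k}(x)=(-1)^{n+k}s_{n,k}(-x)$, rather than reprove everything. One could instead start from the defining expansion \eqref{defrnkx}. Since both parts are short, I will use \eqref{RSR} for the identities and check the generating functions directly from \eqref{defrnkx}.

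\textbf{Step 1: generating functions, parts (iv) and (v).} Expanding $(1-t)^{-(x+y)}=\sum_n (x+y)^{\overline n}\,t^n/n!$ and inserting \eqref{defrnkx} gives \eqref{DGFrnkx} at once. Writing
\[
(1-t)^{-x-y}=(1-t)^{-x}e^{y(-\ln(1-t))}
\]
and extracting the coefficient of $y^k$ then yields \eqref{EGFrnkx}. Alternatively, substitute $t\mapsto -t$ and $x\mapsto -x$ in \eqref{EGFsnkx} and use \eqref{RSR}; the factor $(-1)^{n+k}$ is absorbed by $(-t)^n$ and by $(-\ln(1-t))^k$.

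\textbf{Step 2: addition formulas and the explicit expression, parts (i) and (ii).}
\begin{itemize}
\item For \eqref{AFrnkx}, apply \eqref{AFsnkx} to $s_{n,k}(-x-y)$. The sign $(-1)^{n+k}$ splits as $(-1)^{n+j}(-1)^{j-k}$, and $(-1)^{j-k}$ combines with $(-x)^{j-k}$ to give $x^{j-k}$.
\item For \eqref{AFrnkx2}, use \eqref{AFsnkx2} together with $(-x)_{n-j}=(-1)^{n-j}x^{\overline{n-j}}$; the signs again cancel.
\item Setting $y=0$ in \eqref{AFrnkx} gives \eqref{EErnk1}.
\end{itemize}

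\textbf{Step 3: recurrences, part (iii).} Multiply $(x+y)^{\overline{n+1}}=(x+y+n)(x+y)^{\overline n}$ out and compare coefficients of $y^k$; this gives \eqref{RRrnkx}. Setting $x=0$ gives \eqref{RRrnk}.

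\textbf{Step 4: convolution, part (vi).} Multiply the EGFs \eqref{EGFrnkx} for $(x,k)$ and $(y,j)$. The product is
\[
(1-t)^{-x-y}\,\frac{(-\ln(1-t))^{k+j}}{k!\,j!}=\binom{k+j}{k}\,(1-t)^{-(x+y)}\,\frac{(-\ln(1-t))^{k+j}}{(k+j)!}.
\]
Comparing the coefficients of $t^n/n!$ on both sides gives \eqref{CFrnkx}. The range $k\le r\le n-j$ holds because $r_{r,k}=0$ for $r<k$.

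I do not expect a genuine obstacle. The only care needed is the sign bookkeeping in Step 2: checking that $(-1)^{n+k}$, $(-1)^{n+j}$ and the signs from $(-x)^{j-k}$ and $(-x)_{n-j}$ cancel exactly.
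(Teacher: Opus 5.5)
Your proof is correct. For parts (i) and (ii) it matches the paper's argument. The paper also obtains \eqref{AFrnkx} and \eqref{AFrnkx2} by pushing \eqref{AFsnkx} and \eqref{AFsnkx2} through \eqref{RSR}, with the same sign cancellations (including $(-x)_{n-j}=(-1)^{n-j}x^{\overline{n-j}}$), and then sets $y=0$ to get \eqref{EErnk1}.

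You depart from the paper in (iii)--(vi). The paper transports these through \eqref{RSR} as well:
\begin{itemize}
\item it replaces $x$ by $-x$ in \eqref{RRsnkx} and multiplies by $(-1)^{n+1+k}$;
\item it substitutes $t\mapsto -t$ in \eqref{EGFsnkx};
\item it multiplies the transformed \eqref{CFsnkx} by $(-1)^{n+k+j}$.
\end{itemize}
You instead rederive (iii)--(vi) from the defining expansion \eqref{defrnkx}, using the series $(1-t)^{-a}=\sum_n a^{\overline n}t^n/n!$. In effect you rerun the appendix proofs for $s_{n,k}(x)$ with $(1+t)^{x+y}$ replaced by $(1-t)^{-x-y}$.

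What each approach buys:
\begin{itemize}
\item The paper's uniform transfer makes the whole theorem a formal corollary of Theorem~\ref{BPsnkx}, using a single mechanism.
\item Your route makes (iii)--(vi) independent of \eqref{RSR} and removes the sign bookkeeping there, at the cost of a few lines of repetition.
\end{itemize}

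One small omission: in Step 4 the upper limit $r\le n-j$ also requires $r_{n-r,j}(y)=0$ for $n-r<j$. This holds for the same reason as the lower limit.
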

\begin{proof}
By \eqref{RSR}, $r_{n,k}(x)=(-1)^{\,n+k}s_{n,k}(-x)$, so each of (i)--(vi) is the image of the
corresponding identity of Theorem~\ref{BPsnkx} under $x\mapsto-x$; we record the point that needs
care in each case. For the generating functions the substitution is accompanied by $t\mapsto-t$, so
\eqref{EGFsnkx} gives
\[
  \sum_{n}r_{n,k}(x)\frac{t^{n}}{n!}
  =(-1)^{k}\sum_{n}s_{n,k}(-x)\frac{(-t)^{n}}{n!}
  =(1-t)^{-x}\frac{\bigl(-\ln(1-t)\bigr)^{k}}{k!},
\]
which is \eqref{EGFrnkx}, and multiplying by $y^{k}$ and summing over $k$ gives \eqref{DGFrnkx}.
Replacing $x$ by $-x$ in \eqref{RRsnkx} and multiplying by $(-1)^{\,n+1+k}$ gives \eqref{RRrnkx},
and $x=0$ then gives \eqref{RRrnk}. Applying $(x,y)\mapsto(-x,-y)$ to \eqref{AFsnkx} and
\eqref{AFsnkx2} and using \eqref{RSR}, the accumulated signs cancel to
$+1$, while $(-x)_{n-j}=(-1)^{\,n-j}x^{\overline{n-j}}$ turns the falling factorial into the rising
one; this yields \eqref{AFrnkx} and \eqref{AFrnkx2}, and $y=0$ in \eqref{AFrnkx} gives
\eqref{EErnk1}. Finally, the same substitution in \eqref{CFsnkx}, multiplied by $(-1)^{\,n+k+j}$,
gives \eqref{CFrnkx}.
\end{proof}

\begin{remark}\normalfont
By \eqref{RSR} and \eqref{R-rnksnk}, the constant terms $r(n,k)=(-1)^{\,n+k}s(n,k)$ are the unsigned
Stirling numbers of the first kind (Stirling cycle numbers), and $r_{n,k}(x)$ coincides with
Carlitz's weighted Stirling polynomial of the first kind,
$r_{n,k}(x)=R_1(n,k,x)=\overline{S}_1(n,k,x)+S_1(n,k)$ with $S_1(n,k)=r(n,k)$ \cite[Eq.~(1.4)]{Car80a}.
Its values at $x=r$ give Broder's $r$-Stirling numbers of the first kind,
$R_1(n,k,r)=\genfrac[]{0pt}{}{n+r}{k+r}_{r}$ \cite{Bro}. Thus studying $r_{n,k}(x)$ through $s_{n,k}(x)$
recovers and extends the properties of $R_1(n,k,x)$ obtained by Carlitz; the second-kind analogues
appear in Remark~\ref{rem:earlier}.
\end{remark}

The following theorem is the counterpart of Theorem~\ref{DG-SF-snkx}.
\begin{theorem}\label{DG-SF-rnkx}
(i) Differentiation formula:
\begin{equation} \label{DFrnkx}
    D r_{n,k}(x) =(k+1)r_{n,k+1}(x);\quad  D^r r_{n,k}(x) =(k+1)^{\overline{r}}r_{n,k+r}(x).
\end{equation}

(ii) Difference formula:
\begin{equation} \label{SFrnkx}
    \D r_{n,k}(x) =nr_{n-1,k}(x+1);\quad  \D^r r_{n,k}(x) =(n)_rr_{n-r,k}(x+r).
\end{equation}
\end{theorem}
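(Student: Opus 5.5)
Both formulas follow from the defining relation \eqref{defrnkx}, $(x+y)^{\overline n}=\sum_{k}r_{n,k}(x)y^k$, exactly as in the proof of Theorem~\ref{DG-SF-snkx}.

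For (i), the left side of \eqref{defrnkx} depends on $x$ and $y$ only through $x+y$, so $\partial_x$ and $\partial_y$ act identically on it. Applying $\partial_x$ to the right side gives $\sum_k Dr_{n,k}(x)\,y^k$. Applying $\partial_y$ gives
\[
\sum_k k\,r_{n,k}(x)y^{k-1}=\sum_k (k+1)r_{n,k+1}(x)y^k .
\]
Comparing coefficients of $y^k$ yields $Dr_{n,k}(x)=(k+1)r_{n,k+1}(x)$. Iterating $r$ times gives $D^r r_{n,k}(x)=(k+1)(k+2)\cdots(k+r)\,r_{n,k+r}(x)=(k+1)^{\overline r}r_{n,k+r}(x)$, by a trivial induction on $r$.

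An alternative route is to differentiate \eqref{RSR}, $r_{n,k}(x)=(-1)^{n+k}s_{n,k}(-x)$, and use \eqref{DFsnkx}. The sign change from the chain rule combines with $(-1)^{n+k}$ to give $(-1)^{n+k+1}$, which matches the sign attached to $r_{n,k+1}$. Either route works; the direct one is cleaner.

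For (ii), apply the forward difference in $x$ to $(x+y)^{\overline n}$:
\[
(x+y+1)^{\overline n}-(x+y)^{\overline n}=(x+y+1)^{\overline{n-1}}\bigl[(x+y+n)-(x+y)\bigr]=n\,(x+1+y)^{\overline{n-1}}.
\]
By \eqref{defrnkx} with $x$ replaced by $x+1$, the right side equals $n\sum_k r_{n-1,k}(x+1)y^k$. Comparing coefficients of $y^k$ gives $\D r_{n,k}(x)=n\,r_{n-1,k}(x+1)$.

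For the iterated form, I would induct on $r$, using that $\D$ commutes with the shift $x\mapsto x+1$. Assuming $\D^{r}r_{n,k}(x)=(n)_r r_{n-r,k}(x+r)$, applying $\D$ and the base case with $n-r$ in place of $n$ and $x+r$ in place of $x$ gives
\[
(n)_r(n-r)\,r_{n-r-1,k}(x+r+1)=(n)_{r+1}r_{n-r-1,k}(x+r+1).
\]

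The only point needing care is this shift bookkeeping in (ii): unlike the falling factorial in Theorem~\ref{DG-SF-snkx}, the rising factorial drops its lowest factor $(x+y)$, so the argument of the result advances by $1$ at each step. Once that is tracked, the rest is routine.
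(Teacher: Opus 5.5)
Your proof is correct, but it takes a different route from the paper's. The paper proves both parts in one line by transporting the first-kind formulas of Theorem~\ref{DG-SF-snkx} through the reflection \eqref{RSR}, $r_{n,k}(x)=(-1)^{n+k}s_{n,k}(-x)$. For (i) this is exactly your alternative remark: the chain-rule sign combines with $(-1)^{n+k}$ to give $(-1)^{n+k+1}$. For (ii), reflecting $x\mapsto -x$ turns the forward difference into a backward one:
\[
s_{n,k}(-x-1)-s_{n,k}(-x)=-\Delta s_{n,k}(-x-1)=-n\,s_{n-1,k}(-x-1),
\]
and absorbing the sign gives $n\,r_{n-1,k}(x+1)$. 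In the paper's route, therefore, the shift by $1$ comes from conjugating $\Delta$ by the reflection. You instead work directly from \eqref{defrnkx}, repeating for the rising factorial the argument the paper used for $(x+y)_n$. Your identity $(x+y+1)^{\overline n}-(x+y)^{\overline n}=n\,(x+y+1)^{\overline{n-1}}$ and your induction for $\Delta^r$ are both right. The paper's route is shorter once Theorem~\ref{DG-SF-snkx} is available, and it stresses that the unsigned family is just the reflected signed one. Yours is self-contained and needs no sign bookkeeping. It also explains the shift $x\mapsto x+1$ intrinsically, since the rising factorial loses its bottom factor $x+y$, which makes the accumulated shift $x+r$ in the iterated formula transparent.
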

\begin{proof}
Both formulas follow by applying \eqref{RSR} to \eqref{DFsnkx} and \eqref{SFsnkx}, respectively.
\end{proof}

We conclude with the connection formula relating the two families, together with two of its
consequences.
\begin{theorem}(Connection formula)
\begin{equation}\label{R-snkx-rnkx}
s_{n,k}(x)=r_{n,k}(x+1-n).
\end{equation}
\end{theorem}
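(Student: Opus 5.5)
The connection formula reduces to a polynomial identity in two variables. By the defining relations \eqref{defsnkx} and \eqref{defrnkx}, the coefficient of $y^k$ in $(x+y)_n$ is $s_{n,k}(x)$, and the coefficient of $y^k$ in $(x'+y)^{\overline n}$ is $r_{n,k}(x')$. It therefore suffices to prove
\[
(x+y)_n=\bigl((x+1-n)+y\bigr)^{\overline n}
\]
as polynomials in $x$ and $y$. Once this holds, comparing coefficients of $y^k$ on both sides gives $s_{n,k}(x)=r_{n,k}(x+1-n)$.

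The identity itself is the elementary relation $(z)_n=(z-n+1)^{\overline n}$, applied with $z=x+y$. To verify it, write
\[
(z-n+1)^{\overline n}=(z-n+1)(z-n+2)\cdots(z-n+n)=\prod_{i=0}^{n-1}(z-i)=(z)_n,
\]
which simply lists the same $n$ factors in reverse order. The case $n=0$ is trivial, since both sides equal $1$.

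No real obstacle is expected. The only care needed is that both defining expansions are taken with respect to the same variable $y$, with $x$ shifted only inside the rising factorial. Uniqueness of the coefficients in $\mathbb{C}[x][y]$ then justifies the comparison.

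As a cross-check one could argue from the generating functions. Note that $r_{n,k}(x+1-n)$ is not of the form handled by \eqref{EGFrnkx}, because the shift depends on $n$, so the direct factor-reversal argument above is the route I would take. Alternatively, one could verify the identity by induction on $n$ using the recurrences \eqref{RRsnkx} and \eqref{RRrnkx}. With $x'=x+1-n$, the recurrence \eqref{RRrnkx} at index $n$ and argument $x-n$ reads $r_{n+1,k}(x-n)=(n+x-n)\,r_{n,k}(x-n)+r_{n,k-1}(x-n)$. This does not match \eqref{RRsnkx} directly, since the shift changes with $n$. That mismatch is why I prefer the one-line product argument.
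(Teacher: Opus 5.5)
Your argument is correct and is essentially the paper's: both rest on the reversal identity $(z)_n=(z+1-n)^{\overline n}$. The paper applies it only at $z=x$ (the case $k=0$) and lifts to general $k$ by applying $D^k/k!$ together with the differentiation formulas \eqref{DFsnkx} and \eqref{DFrnkx}; you instead substitute $z=x+y$ and read off the coefficients of $y^k$ directly from the defining expansions \eqref{defsnkx} and \eqref{defrnkx}, which is the same Taylor-coefficient extraction done without citing those formulas.
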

\begin{proof}
We use the well-known relation between the falling and rising factorials,
\[
 s_{n,0}(x) =r_{n,0}(x+1-n),
\]
that is, $(x)_n=(x+1-n)^{\overline n}$. Applying the operator $\frac{D^k}{k!}$ to this identity and
using \eqref{DFsnkx} and \eqref{DFrnkx} yields \eqref{R-snkx-rnkx}.
\end{proof}
\begin{corollary}
\begin{eqnarray}
    r_{n,k}(-n) &=& s_{n,k}(-1)=s(n+1,k+1);\quad s_{n,k}(n) =r_{n,k}(1)= r(n+1,k+1). \label{my-rnkmn-EQ}\\
 r_{n,k}(1-n)&=& s(n,k);\quad s_{n,k}(n-1)= r(n,k).\label{my-rnkmn-EQ2}
\end{eqnarray}
\end{corollary}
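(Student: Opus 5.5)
The plan is to derive all four identities as specializations of the two results just proved: the inversion formula \eqref{IFHRRsnk} and the connection formula \eqref{R-snkx-rnkx}. The dictionary \eqref{RSR} and \eqref{R-rnksnk} will carry us back and forth between the signed and unsigned families. No generating functions should be needed; every step is an evaluation at a particular point.

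First, \eqref{my-rnkmn-EQ2}. Put $x=0$ in \eqref{R-snkx-rnkx}. This gives $s_{n,k}(0)=r_{n,k}(1-n)$, and since $s_{n,k}(0)=s(n,k)$ we obtain $r_{n,k}(1-n)=s(n,k)$. For the companion identity, evaluate \eqref{RSR} at $x=1-n$:
\[
r_{n,k}(1-n)=(-1)^{n+k}s_{n,k}(n-1).
\]
Comparing with the previous line gives $s_{n,k}(n-1)=(-1)^{n+k}s(n,k)$, and this equals $r(n,k)$ by \eqref{R-rnksnk}.

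Next, \eqref{my-rnkmn-EQ}. Put $x=-1$ in \eqref{R-snkx-rnkx} to get $s_{n,k}(-1)=r_{n,k}(-n)$. Then \eqref{IFHRRsnk} gives $s_{n,k}(-1)=s(n+1,k+1)$. For the last pair, evaluate \eqref{RSR} at $x=-1$:
\[
r_{n,k}(1)=(-1)^{n+k}s_{n,k}(-1)=(-1)^{n+k}s(n+1,k+1)=r(n+1,k+1),
\]
where the final equality is \eqref{R-rnksnk} with $(n,k)$ replaced by $(n+1,k+1)$; the sign $(-1)^{n+k}$ is unchanged because $(n+1)+(k+1)\equiv n+k$. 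Finally, \eqref{R-snkx-rnkx} at $x=n$ gives $s_{n,k}(n)=r_{n,k}(1)$, which closes the chain.

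I expect no real obstacle. The only points that need care are the signs when applying \eqref{RSR} and \eqref{R-rnksnk} with shifted indices, and the range of validity. Since \eqref{IFHRRsnk} holds for all $k\ge0$, the identities hold for all $0\le k\le n$.
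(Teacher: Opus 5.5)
Your proof is correct and takes the same route as the paper. The paper gives no separate proof, presenting the corollary as an immediate consequence of the connection formula \eqref{R-snkx-rnkx} specialized at suitable points, together with \eqref{IFHRRsnk}, \eqref{RSR} and \eqref{R-rnksnk}, which is what you do. One small slip: the display $r_{n,k}(1)=(-1)^{n+k}s_{n,k}(-1)$ is \eqref{RSR} at $x=1$, not $x=-1$; also, $s_{n,k}(n-1)=r_{n,k}(0)=r(n,k)$ follows more directly from \eqref{R-snkx-rnkx} at $x=n-1$.
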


We now present reflection formulas for $s_{n,k}(x)$ and $r_{n,k}(x)$.
\begin{theorem}(Reflection formula)
\begin{eqnarray}
s_{n,k}(\tfrac{n-1}{2}+x) &=& (-1)^{n+k}s_{n,k}(\tfrac{n-1}{2}-x);\label{RF-Feq-snkx2}\\
r_{n,k}(\tfrac{1-n}{2}+x) &=& (-1)^{n+k}r_{n,k}(\tfrac{1-n}{2}-x).\label{RF-Feq-snkx}
\end{eqnarray}
\end{theorem}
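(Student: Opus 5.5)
The plan is to work at the level of the defining generating polynomial rather than coefficient by coefficient. By \eqref{defsnkx}, $s_{n,k}(x)$ is the coefficient of $y^k$ in $(x+y)_n$. I will therefore write $x=\tfrac{n-1}{2}+u$ and look for a symmetry of $(\tfrac{n-1}{2}+u+y)_n$ under $(u,y)\mapsto(-u,-y)$.

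\textbf{Step 1: the reflection identity for the falling factorial.} Reversing the order of the factors in the falling factorial gives
\[
(z)_n=\prod_{i=0}^{n-1}(z-i)=\prod_{i=0}^{n-1}\bigl(z-(n-1)+i\bigr)=(-1)^n\prod_{i=0}^{n-1}\bigl(n-1-z-i\bigr)=(-1)^n(n-1-z)_n .
\]
I substitute $z=\tfrac{n-1}{2}+u+y$. Then $n-1-z=\tfrac{n-1}{2}-u-y$, and so
\[
\Bigl(\tfrac{n-1}{2}+u+y\Bigr)_n=(-1)^n\Bigl(\tfrac{n-1}{2}-u+(-y)\Bigr)_n .
\]

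\textbf{Step 2: compare coefficients of $y^k$.} On the left-hand side, \eqref{defsnkx} gives the coefficient $s_{n,k}(\tfrac{n-1}{2}+u)$. On the right-hand side, I apply \eqref{defsnkx} with $x=\tfrac{n-1}{2}-u$ and variable $-y$, which gives the coefficient $(-1)^n(-1)^k s_{n,k}(\tfrac{n-1}{2}-u)$. Renaming $u$ as $x$ yields \eqref{RF-Feq-snkx2}.

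\textbf{Step 3: the rising-factorial version.} I apply \eqref{RSR}, namely $r_{n,k}(x)=(-1)^{n+k}s_{n,k}(-x)$, to both sides of the claimed identity:
\[
r_{n,k}\bigl(\tfrac{1-n}{2}+x\bigr)=(-1)^{n+k}s_{n,k}\bigl(\tfrac{n-1}{2}-x\bigr),\qquad
r_{n,k}\bigl(\tfrac{1-n}{2}-x\bigr)=(-1)^{n+k}s_{n,k}\bigl(\tfrac{n-1}{2}+x\bigr).
\]
By \eqref{RF-Feq-snkx2}, the right-hand sides differ by the factor $(-1)^{n+k}$, and this gives \eqref{RF-Feq-snkx}.

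As a cross-check, one could instead derive \eqref{RF-Feq-snkx} from the connection formula \eqref{R-snkx-rnkx}, since $r_{n,k}(\tfrac{1-n}{2}+x)=s_{n,k}(\tfrac{n-1}{2}+x)$. Combined with \eqref{RF-Feq-snkx2} and the same connection formula at $-x$, this reproduces the result.

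There is no real obstacle in this argument. The only points needing care are keeping the signs $(-1)^n$ and $(-1)^k$ straight, and justifying the coefficient extraction in Step 2. The latter is legitimate because both sides are polynomials in $y$ with polynomial coefficients in $u$, so the coefficients can be compared as polynomial identities.
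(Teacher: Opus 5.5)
Your proof is correct, but you derive the key identity differently from the paper. After Step 2, before recentring, your argument gives $s_{n,k}(x)=(-1)^{n+k}s_{n,k}(n-1-x)$, and this is exactly the identity the paper uses.

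The routes to it differ:
\begin{itemize}
\item You reverse the factors of the two-variable falling factorial, $(x+y)_n=(-1)^n(n-1-x-y)_n$, and read off the coefficients of $y^k$ directly from \eqref{defsnkx}.
\item The paper chains two coefficient-level identities it has already proved: \eqref{RSR}, and the connection formula \eqref{R-snkx-rnkx}, which it obtains by applying $D^k/k!$ to $(x)_n=(x+1-n)^{\overline n}$.
\end{itemize}
Your reversal identity is precisely the composite of the two facts behind those formulas, namely $(x)_n=(x+1-n)^{\overline n}$ and $(-z)^{\overline n}=(-1)^n(z)_n$. So the mathematical content is the same.

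What each version buys: yours is self-contained and needs only the definition plus a sign count. The paper's reuses its existing machinery and treats both families symmetrically. For the second formula, the paper derives $r_{n,k}(x)=(-1)^{n+k}r_{n,k}(1-n-x)$ by the same two-step chain, whereas you transfer \eqref{RF-Feq-snkx2} through \eqref{RSR}. Your cross-check adds a further observation: given \eqref{R-snkx-rnkx} alone, the two reflection formulas are equivalent to each other.
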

\begin{proof}
By \eqref{RSR} and the connection formula \eqref{R-snkx-rnkx},
\[
 s_{n,k}(x)=(-1)^{n+k}r_{n,k}(-x)=(-1)^{n+k}s_{n,k}(n-1-x);
\]
replacing $x$ by $\frac{n-1}{2}+x$ gives \eqref{RF-Feq-snkx2}. Likewise
\[
 r_{n,k}(x)=(-1)^{n+k}s_{n,k}(-x)=(-1)^{n+k}r_{n,k}(1-n-x),
\]
and replacing $x$ by $\frac{1-n}{2}+x$ gives \eqref{RF-Feq-snkx}.
\end{proof}

\subsection{Weighted Stirling polynomials and numbers of the second kind}

The four classes of weighted Stirling polynomials of the second kind introduced in
\cite{J-St1} are defined by
\begin{align}
(x+y)^n
&=
\sum_{k=0}^{n}
G_{n,k}(x)\binom{y}{k},
\label{defGnraa}\\
(x+y)^n
&=
\sum_{k=0}^{n}
J_{n,k}(x)\binom{y+k}{k},
\label{defJnraa}\\
(x+y)^n
&=
\sum_{k=0}^{n}
K_{n,k}(x)\binom{y+k-1}{k},
\label{defKnraa}\\
(x+y)^n
&=
\sum_{k=0}^{n}
H_{n,k}(x)\binom{y-1}{k}.
\label{defHnraa}
\end{align}

For each family we write $F(n,k)=F_{n,k}(x)\big|_{x=0}$ for its constant term, where $F_{n,k}(x)$
denotes any one of $G_{n,k}(x)$, $J_{n,k}(x)$, $K_{n,k}(x)$, $H_{n,k}(x)$. These constant
specializations recover, up to sign and a factorial factor, the classical Stirling numbers of the
second kind.

\begin{proposition}\label{prop:GJKH}
For $0\le k\le n$,
\begin{align}
G(n,k)
&=
k!S(n,k),
\label{defGnk}\\
J(n,k)
&=
(-1)^{\,n+k}k!S(n+1,k+1),
\label{defJnk}\\
K(n,k)
&=
(-1)^{\,n+k}k!S(n,k),
\label{defKnk}\\
H(n,k)
&=
k!S(n+1,k+1),
\label{defHnk}
\end{align}
\end{proposition}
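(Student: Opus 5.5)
Setting $x=0$ in each defining relation \eqref{defGnraa}--\eqref{defHnraa} gives an expansion of $y^n$ in a binomial basis, and the task is to identify its coefficients with Stirling numbers of the second kind. The starting point is the classical expansion
\[
y^n=\sum_{k=0}^{n}S(n,k)(y)_k=\sum_{k=0}^{n}k!\,S(n,k)\binom{y}{k}.
\]
Since the polynomials $\binom{y}{k}$ form a basis of $\mathbb{C}[y]$, uniqueness of coefficients in \eqref{defGnraa} at $x=0$ gives \eqref{defGnk} immediately.

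For \eqref{defKnk}, I will substitute $y\mapsto -y$ in the classical expansion and use
\[
(-y)_k=(-1)^k y^{\overline k}=(-1)^k k!\binom{y+k-1}{k}.
\]
This gives
\[
y^n=(-1)^n(-y)^n=\sum_k (-1)^{n+k}k!\,S(n,k)\binom{y+k-1}{k},
\]
and comparing with \eqref{defKnraa} at $x=0$ proves \eqref{defKnk}.

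For \eqref{defHnk}, the plan is to shift the variable. I will use the standard identity
\[
(y+1)^n=\sum_{k}S(n+1,k+1)(y)_k.
\]
This follows, for example, from the recurrence $S(n+1,k+1)=\sum_j\binom{n}{j}S(j,k)$, or from $y^{n+1}=\sum_k S(n+1,k+1)(y)_{k+1}$ after dividing by $y$ and using $(y)_{k+1}=y\,(y-1)_k$. Replacing $y$ by $y-1$ gives
\[
y^n=\sum_k k!\,S(n+1,k+1)\binom{y-1}{k},
\]
which is \eqref{defHnk}. This division step, justifying the shifted identity, is the only place requiring a genuine identity rather than bookkeeping, so I regard it as the main obstacle. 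It is still routine: both sides of $y^{n+1}=\sum_k S(n+1,k+1)\,y\,(y-1)_k$ are polynomials, and cancelling $y$ is legitimate in $\mathbb{C}[y]$, using $S(n+1,0)=0$ for $n\ge0$.

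Finally, \eqref{defJnk} follows by combining the two previous tricks. I will put $y\mapsto -y$ in the identity $y^n=\sum_k k!\,S(n+1,k+1)\binom{y-1}{k}$ and use
\[
\binom{-y-1}{k}=(-1)^k\binom{y+k}{k}.
\]
This yields
\[
y^n=\sum_k(-1)^{n+k}k!\,S(n+1,k+1)\binom{y+k}{k},
\]
and comparing with \eqref{defJnraa} at $x=0$ completes the proof. In each of the four cases, uniqueness of coefficients holds because the relevant binomial polynomials have exact degree $k$ and therefore form a basis.
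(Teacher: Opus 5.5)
Your proof is correct, but it takes a different route from the paper's. The paper proves this proposition only after Theorem~\ref{BPGnkx} and Proposition~\ref{prop:KJH-G}, in three steps:
\begin{enumerate}
\item It sets $x=0$ in the alternating-sum formula \eqref{Exp-Gnkx} and recognizes the classical expression $k!S(n,k)=\sum_{\nu}(-1)^{k-\nu}\binom{k}{\nu}\nu^n$.
\item It reads off $K(n,k)$ from \eqref{R-KnrGnr} at $x=0$.
\item It computes $G_{n,k}(1)=k!S(n+1,k+1)$ by differentiating $(e^t-1)^{k+1}$ and comparing with \eqref{stipol} at $x=1$. Then $H(n,k)$ and $J(n,k)$ come from \eqref{R-HnrGnr} and \eqref{R-JnrGnr}.
\end{enumerate}

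You instead work only with polynomial identities in $y$. Your substitutions $y\mapsto -y$ and $y\mapsto y-1$ are the $x=0$ specializations of Proposition~\ref{prop:KJH-G}. Your one substantive input, $(y+1)^n=\sum_k S(n+1,k+1)(y)_k$, is exactly the statement $G_{n,k}(1)=k!S(n+1,k+1)$, read through \eqref{defGnraa} at $x=1$. You obtain it by factoring $(y)_{k+1}=y\,(y-1)_k$ and cancelling $y$, where the paper uses the exponential generating function.

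What each approach buys: yours is shorter and needs only the falling-factorial expansion of $y^n$ and the basis property, so it could come before any generating-function machinery. The paper anchors on a different classical characterization of $S(n,k)$, the alternating-sum formula. It reuses identities it needs anyway, and it establishes the full polynomial relations among $G,J,K,H$ rather than only their constant terms.
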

\begin{proof}
See Appendix~\ref{app:second}.
\end{proof}

The remaining three families reduce to $G_{n,k}(x)$ as follows.
\begin{proposition}\label{prop:KJH-G}
For $0\le k\le n$,
\begin{align}
K_{n,k}(x)
&=
(-1)^{\,n+k}G_{n,k}(-x),
\label{R-KnrGnr}\\
J_{n,k}(x)
&=
(-1)^{\,n+k}G_{n,k}(1-x),
\label{R-JnrGnr}\\
H_{n,k}(x)
&=
G_{n,k}(1+x).
\label{R-HnrGnr}
\end{align}
\end{proposition}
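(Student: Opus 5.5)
The plan is to compare the defining expansions directly. Each of \eqref{defJnraa}, \eqref{defKnraa}, \eqref{defHnraa} becomes an instance of \eqref{defGnraa} after a change of variables. Since the polynomials $\binom{y}{k}$, $0\le k\le n$, form a basis of polynomials in $y$ of degree at most $n$, the coefficients $G_{n,k}(x)$ in \eqref{defGnraa} are uniquely determined. The same holds for the bases $\binom{y+k}{k}$, $\binom{y+k-1}{k}$ and $\binom{y-1}{k}$. So it suffices to rewrite each basis element as a signed $\binom{\cdot}{k}$ of a shifted variable and then compare coefficients.

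For \eqref{R-HnrGnr}, I substitute $y\mapsto y-1$ and $x\mapsto x+1$ in \eqref{defGnraa}. Since $x+y$ is unchanged, this gives
\[
(x+y)^n=\sum_{k=0}^n G_{n,k}(x+1)\binom{y-1}{k}.
\]
Comparing with \eqref{defHnraa} yields $H_{n,k}(x)=G_{n,k}(1+x)$.

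For \eqref{R-KnrGnr}, I use the upper negation identity
\[
\binom{-y}{k}=(-1)^k\binom{y+k-1}{k}.
\]
I replace $(x,y)$ by $(-x,-y)$ in \eqref{defGnraa}, which gives $(-1)^n(x+y)^n=\sum_k G_{n,k}(-x)(-1)^k\binom{y+k-1}{k}$. Multiplying by $(-1)^n$ and comparing with \eqref{defKnraa} gives $K_{n,k}(x)=(-1)^{n+k}G_{n,k}(-x)$.

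For \eqref{R-JnrGnr}, I use the same negation identity with a shift:
\[
\binom{-y-1}{k}=(-1)^k\binom{y+k}{k}.
\]
I substitute $y\mapsto -y-1$ and $x\mapsto 1-x$ in \eqref{defGnraa}; then $x+y\mapsto -(x+y)$. This gives $(-1)^n(x+y)^n=\sum_k G_{n,k}(1-x)(-1)^k\binom{y+k}{k}$, and uniqueness in \eqref{defJnraa} gives the claim.

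The argument is routine. The only point needing care is justifying uniqueness of the coefficients, which follows because each listed family has exact degree $k$ in $y$. I would also check the sign bookkeeping in the two negation identities, which I would verify at $k=1$.
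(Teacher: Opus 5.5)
Your proof is correct and takes essentially the paper's approach: change variables in the defining expansions, apply the upper-negation identity, and compare coefficients in the relevant binomial basis. The only cosmetic difference is in the $J$ case: you go directly from \eqref{defGnraa} using $\binom{-y-1}{k}=(-1)^k\binom{y+k}{k}$, whereas the paper first shows $J_{n,k}(x)=K_{n,k}(x-1)$ and then invokes \eqref{R-KnrGnr}.
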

\begin{proof}
See Appendix~\ref{app:second}.
\end{proof}

The polynomials $G_{n,k}(x)$ satisfy identities parallel to those for $s_{n,k}(x)$ established in
Theorem~\ref{BPsnkx}. Analogous identities hold for $H_{n,k}(x)$, $J_{n,k}(x)$, and $K_{n,k}(x)$. In
what follows, we focus on the family $G_{n,k}(x)$ and use these identities to study Comtet numbers.
\begin{theorem}\label{BPGnkx}
(i) Addition formulas:
\begin{eqnarray}
   G_{n,k}(x+y) &=& \sum_{j=k}^n \binom{n}{j} G_{j,k}(y)\, x^{n-j};\label{AFGnkx} \\
   G_{n,k}(x+y) &=& \sum_{j=k}^n G_{n,j}(y)\binom{x}{j-k}.\label{AFGnkx2}
\end{eqnarray}
(ii) Explicit expressions:
\begin{eqnarray}
   G_{n,k}(x) &=&\sum_{j=k}^n \binom{n}{j} G(j,k)\, x^{n-j};\label{defSP} \\
   G_{n,k}(x) &=&\sum_{\nu=0}^{k}(-1)^{k-\nu}\binom{k}{\nu}(x+\nu)^n.\label{Exp-Gnkx}
\end{eqnarray}
(iii) Recurrence relations:
\begin{equation}\label{RR-Gnkx}
     G_{n+1,k}(x)= (x+k)G_{n,k}(x) + kG_{n,k-1}(x).
\end{equation}
\begin{equation}\label{RR-Gnk}
     G(n+1,k)= kG(n,k) + kG(n,k-1).
\end{equation}
(iv) Exponential generating function:
\begin{equation}\label{stipol}
     \sum_{n\geq0}G_{n,k}(x)\frac{t^n}{n!}= e^{xt}\bigl(e^t-1\bigr)^k.
\end{equation}
(v) Double generating function:
\begin{equation}\label{DGFGnkx}
   \sum_{k\geq0}\sum_{n\geq k} G_{n,k}(x)\binom{y}{k}\frac{t^n}{n!}= e^{(x+y)t}.
\end{equation}
(vi) Convolution identity:
\begin{equation}\label{Con-Gnkx}
   G_{n,k+j}(x+y) = \sum_{r=k}^{n-j} \binom{n}{r} G_{r,k}(x) G_{n-r,j}(y).
\end{equation}
\end{theorem}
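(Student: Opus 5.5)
The plan is to derive every part of the theorem from the exponential generating function (iv), which I establish first directly from the definition \eqref{defGnraa}. Multiply \eqref{defGnraa} by $t^n/n!$ and sum over $n$. The left side gives $e^{(x+y)t}=e^{xt}(1+(e^t-1))^y=e^{xt}\sum_{k\ge0}\binom{y}{k}(e^t-1)^k$, the binomial series being valid as formal power series in $t$ because $e^t-1$ has no constant term. Since $(e^t-1)^k=O(t^k)$, the coefficient of $t^n/n!$ in $e^{xt}(e^t-1)^k$ vanishes for $k>n$. The polynomials $\binom{y}{k}$ form a basis of $\mathbb{C}[y]$, so comparing coefficients of $\binom{y}{k}t^n/n!$ yields \eqref{stipol}, and \eqref{DGFGnkx} is just the combined statement.

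With (iv) available, the remaining parts follow in this order.

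\emph{Explicit expressions.} Expanding $(e^t-1)^k$ binomially gives $e^{xt}(e^t-1)^k=\sum_{\nu}(-1)^{k-\nu}\binom{k}{\nu}e^{(x+\nu)t}$, and extracting the coefficient of $t^n/n!$ gives \eqref{Exp-Gnkx}. Setting $x=0$ in (iv) shows that $\sum_n G(n,k)t^n/n!=(e^t-1)^k$. Multiplying this series by $e^{xt}$ and taking the Cauchy product gives \eqref{defSP}.

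\emph{Addition formulas.} For \eqref{AFGnkx}, write $e^{(x+y)t}(e^t-1)^k=e^{xt}\cdot e^{yt}(e^t-1)^k$ and take the Cauchy product. For \eqref{AFGnkx2}, use instead $e^{xt}=(1+(e^t-1))^x=\sum_i\binom{x}{i}(e^t-1)^i$. Then $e^{(x+y)t}(e^t-1)^k=\sum_i\binom{x}{i}e^{yt}(e^t-1)^{k+i}$, and reading off coefficients gives $\sum_j G_{n,j}(y)\binom{x}{j-k}$.

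\emph{Convolution.} The product $e^{xt}(e^t-1)^k\cdot e^{yt}(e^t-1)^j$ equals $e^{(x+y)t}(e^t-1)^{k+j}$, and its Cauchy product gives \eqref{Con-Gnkx}. The summation range $k\le r\le n-j$ comes from the vanishing of $G_{r,k}$ for $r<k$.

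\emph{Recurrence.} Differentiate (iv) in $t$. Using $e^t=(e^t-1)+1$,
\[
\frac{d}{dt}\Bigl[e^{xt}(e^t-1)^k\Bigr]=(x+k)e^{xt}(e^t-1)^k+k\,e^{xt}(e^t-1)^{k-1}.
\]
Comparing coefficients of $t^n/n!$ gives \eqref{RR-Gnkx}, and setting $x=0$ gives \eqref{RR-Gnk}.

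The only step needing real care is the first one: justifying the substitution $(1+u)^y$ with $u=e^t-1$ as an identity of formal power series whose coefficients are polynomials in $y$, and then using linear independence of the $\binom{y}{k}$. I would handle this by noting that both sides are, coefficientwise in $t$, polynomials in $y$ that agree for every nonnegative integer $y$, where the ordinary binomial theorem applies. Agreement at infinitely many points forces them to be equal as polynomials. Everything after that is routine coefficient extraction.
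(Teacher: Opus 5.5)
Your proposal is correct and takes essentially the paper's route: derive \eqref{stipol} from the definition via $e^{(x+y)t}=e^{xt}\sum_k\binom{y}{k}(e^t-1)^k$, then get every other part by coefficient extraction, Cauchy products, and differentiation in $t$. The upper limit $r\le n-j$ in \eqref{Con-Gnkx} comes from $G_{n-r,j}(y)=0$ for $n-r<j$, not only from $G_{r,k}(x)=0$ for $r<k$. The one real difference is \eqref{AFGnkx2}: you expand $e^{xt}=\sum_i\binom{x}{i}(e^t-1)^i$ inside the generating function, while the paper applies \eqref{defGnraa} to $(x+y)+z=y+(x+z)$ together with the Vandermonde convolution \eqref{A-vdm-bin}. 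These are equivalent forms of $(1+u)^{x+z}=(1+u)^x(1+u)^z$, and your polynomial-identity justification of the formal binomial series is a welcome detail that the paper leaves implicit.
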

\begin{proof}
Complete proofs are given in Appendix~\ref{app:second}; each identity follows with some work
from the exponential generating function \eqref{stipol}, which is itself read off from the
definition \eqref{defGnraa}. These identities, too, occur as special cases in
\cite{J-St1}, with related forms appearing in the earlier works of Carlitz
\cite{Car80a,Car80b}, Broder \cite{Bro}, and Koutras \cite{Kou}.
\end{proof}

The next theorem states the differentiation and difference formulas for $G_{n,k}(x)$.
\begin{theorem}\label{DG-SF-Gnkx}
(i) Differentiation formula:
\begin{equation}\label{DFGnkx}
   D G_{n,k}(x) = n\,G_{n-1,k}(x);\quad\quad D^r G_{n,k}(x) = (n)_r\,G_{n-r,k}(x).
\end{equation}

(ii) Difference formula:
\begin{equation}\label{D-Gnr}
   \D G_{n,k}(x) = G_{n,k+1}(x);\quad\quad \D^r G_{n,k}(x) = G_{n,k+r}(x).
\end{equation}

\end{theorem}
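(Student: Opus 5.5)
Both formulas can be read off directly from the exponential generating function \eqref{stipol}, $\sum_{n\ge0}G_{n,k}(x)\,t^n/n! = e^{xt}(e^t-1)^k$. Alternatively, one could work from the explicit finite-difference expression \eqref{Exp-Gnkx}; I would use that as a cross-check.

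\emph{Differentiation.} I would differentiate \eqref{stipol} with respect to $x$. The left side becomes $\sum_n DG_{n,k}(x)\,t^n/n!$. The right side becomes
\[
t\,e^{xt}(e^t-1)^k=\sum_{n\ge0}G_{n,k}(x)\frac{t^{n+1}}{n!}=\sum_{n\ge1}nG_{n-1,k}(x)\frac{t^n}{n!}.
\]
Comparing coefficients of $t^n/n!$ gives $DG_{n,k}(x)=nG_{n-1,k}(x)$. Iterating $r$ times then gives
\[
D^rG_{n,k}(x)=n(n-1)\cdots(n-r+1)\,G_{n-r,k}(x)=(n)_r\,G_{n-r,k}(x),
\]
by induction on $r$.

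\emph{Difference.} Here I would apply $\Delta$ in $x$ to the same generating function:
\[
e^{(x+1)t}(e^t-1)^k-e^{xt}(e^t-1)^k=e^{xt}(e^t-1)^{k+1}.
\]
The right side is exactly the generating function of $G_{n,k+1}(x)$, so $\Delta G_{n,k}(x)=G_{n,k+1}(x)$. Induction on $r$ then gives $\Delta^rG_{n,k}(x)=G_{n,k+r}(x)$.

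As a sanity check on the difference formula, apply $\Delta$ to \eqref{Exp-Gnkx}, which is $\Delta^k$ applied to $x^n$. It yields $\Delta^{k+1}x^n$, consistent with the above. Similarly, $D$ commutes with $\Delta^k$, so $DG_{n,k}=\Delta^k(nx^{n-1})=nG_{n-1,k}$. Because these operator manipulations commute, there is no real obstacle. The only care needed is the index shift in the $t$-multiplication step (including the $n=0$ term, where both sides vanish), and the convention $G_{n,k}=0$ for $k>n$, which is consistent with both generating-function arguments.
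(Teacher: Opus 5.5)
Your proposal is correct and follows the paper's own proof: differentiate \eqref{stipol} in $x$ (multiplication by $t$) and apply $\Delta$ in $x$ (multiplication by $e^t-1$), then compare coefficients of $t^n/n!$ and iterate. Your extra cross-check via $G_{n,k}(x)=\Delta^k x^n$ and the commuting of $D$ with $\Delta$ is also sound.
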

\begin{proof}
Both formulas come from the generating function \eqref{stipol}: applying $D_x$ multiplies its
right-hand side by $t$, which yields \eqref{DFGnkx}, while applying $\D_x$ multiplies it by
$(e^t-1)$, which yields \eqref{D-Gnr}. Iteration gives the $D^r$ and $\D^r$ forms.
\end{proof}

Finally, $G_{n,k}(x)$ enjoys a reflection symmetry about the point $x=-k/2$, the second-kind
counterpart of the reflection formulas \eqref{RF-Feq-snkx2} and \eqref{RF-Feq-snkx}.
\begin{theorem}(Reflection formula)
\begin{equation}\label{RecF-Gnkx}
G_{n,k}\left(-\tfrac{k}{2}+x\right) = (-1)^{n+k}G_{n,k}\left(-\tfrac{k}{2}-x\right).
\end{equation}
\end{theorem}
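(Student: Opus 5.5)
We prove the reflection formula \eqref{RecF-Gnkx} from the exponential generating function \eqref{stipol}. Replacing $x$ by $-\tfrac{k}{2}+x$ there gives
\[
\sum_{n\ge0}G_{n,k}\!\left(-\tfrac{k}{2}+x\right)\frac{t^n}{n!}
= e^{xt}\,e^{-kt/2}\bigl(e^t-1\bigr)^k
= e^{xt}\bigl(e^{t/2}-e^{-t/2}\bigr)^k ,
\]
since $e^{-kt/2}(e^t-1)^k=\bigl(e^{-t/2}(e^t-1)\bigr)^k$. The symmetric factor $f(t)=e^{t/2}-e^{-t/2}=2\sinh(t/2)$ is an odd function of $t$, so $f(-t)^k=(-1)^k f(t)^k$. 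This oddness drives the whole argument.

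Next we replace $t$ by $-t$ and $x$ by $-x$ simultaneously. Then $e^{xt}$ is unchanged, and the right-hand side becomes
\[
e^{xt}\bigl(e^{-t/2}-e^{t/2}\bigr)^k=(-1)^k e^{xt}\bigl(e^{t/2}-e^{-t/2}\bigr)^k .
\]
On the left-hand side, the same substitution gives
\[
\sum_{n\ge0}G_{n,k}\!\left(-\tfrac{k}{2}-x\right)\frac{(-t)^n}{n!} .
\]

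Combining these, the two series
\[
\sum_{n} (-1)^n G_{n,k}\!\left(-\tfrac{k}{2}-x\right)\frac{t^n}{n!}
\quad\text{and}\quad
(-1)^k\sum_{n} G_{n,k}\!\left(-\tfrac{k}{2}+x\right)\frac{t^n}{n!}
\]
are equal as formal power series in $t$ with polynomial coefficients in $x$. Comparing the coefficients of $t^n/n!$ yields
\[
(-1)^n G_{n,k}\!\left(-\tfrac{k}{2}-x\right)=(-1)^k G_{n,k}\!\left(-\tfrac{k}{2}+x\right),
\]
which is \eqref{RecF-Gnkx} after multiplying by $(-1)^k$.

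As a cross-check, the same identity follows from the explicit formula \eqref{Exp-Gnkx}. Substituting $\nu\mapsto k-\nu$ in
\[
G_{n,k}\!\left(-\tfrac{k}{2}+x\right)=\sum_{\nu}(-1)^{k-\nu}\binom{k}{\nu}\left(x+\nu-\tfrac{k}{2}\right)^n
\]
turns $\nu-\tfrac{k}{2}$ into $\tfrac{k}{2}-\nu$ and $(-1)^{k-\nu}$ into $(-1)^\nu=(-1)^k(-1)^{k-\nu}$. Factoring $(-1)^n$ out of $\left(x-(\nu-\tfrac{k}{2})\right)^n$ then gives the same result.

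There is no real obstacle. The only point needing care is the sign bookkeeping when changing variables in both $x$ and $t$, so I would present the generating-function argument and mention the explicit-formula check as confirmation.
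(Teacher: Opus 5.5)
Your proof is correct and is essentially the paper's argument: both apply $t\mapsto -t$ together with a reflection of $x$ in the generating function \eqref{stipol} and then compare coefficients. The only difference is the order of steps. The paper substitutes $x\mapsto -x-k$ to get $G_{n,k}(x)=(-1)^{n+k}G_{n,k}(-x-k)$ and shifts by $-\tfrac{k}{2}$ afterwards, whereas you shift first so that the oddness of $e^{t/2}-e^{-t/2}$ is explicit; your cross-check via \eqref{Exp-Gnkx} is also valid.
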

\begin{proof}
Substituting $x\mapsto-x-k$ and $t\mapsto-t$ in the generating function \eqref{stipol}, and using
$1-e^{-t}=e^{-t}(e^t-1)$, we obtain
\[
\sum_{n\ge0}(-1)^{n+k}G_{n,k}(-x-k)\frac{t^n}{n!}
=(-1)^{k}e^{(x+k)t}\bigl(e^{-t}-1\bigr)^k
=e^{xt}\bigl(e^{t}-1\bigr)^k
=\sum_{n\ge0}G_{n,k}(x)\frac{t^n}{n!}.
\]
Comparing the coefficients of $t^n/n!$ gives $G_{n,k}(x)=(-1)^{n+k}G_{n,k}(-x-k)$; replacing $x$ by
$-\frac{k}{2}+x$ yields \eqref{RecF-Gnkx}.
\end{proof}

We now remark on several earlier studies concerning variants of the polynomials $G_{n,k}(x)$, and,
in the last item, of $s_{n,k}(x)$ and $r_{n,k}(x)$ as well.
\begin{remark}\label{rem:earlier}
{\rm
1. Applying the Newton interpolation formula \cite{Jor} to the monomial $f(x)=x^n$ yields the
representation \eqref{defGnraa}, with expansion coefficients
\begin{equation}\label{New-exp1}
G_{n,k}(x)=\Delta^k x^n=\sum_{j=0}^{k}(-1)^{k-j}\binom{k}{j}(x+j)^n,
\end{equation}
which coincides with the explicit expression \eqref{Exp-Gnkx}. This identity is classical; see
\cite[Example 1]{Ri58} or \cite[Equation 9.23]{QG}, where it is written as
\begin{equation}\label{New-exp2}
 \Delta^k x^n =\sum_{j=0}^n\binom{n}{j}k!S(j,k)x^{n-j}.
\end{equation}

2. Nielsen \cite{Nie} formulated $(x+y)^n$ using ${\mathcal A}_k^n(x)$, which is equivalent to
$\Delta^k x^n$, and used it to derive numerous formulas for Bernoulli and Euler polynomials.

3. Carlitz \cite{Car80a,Car80b} introduced polynomials $R(n,k,x)$ from a sum related to the weighted
Stirling numbers of the second kind $\overline{S}(n,k+1,x)$, namely
$R(n,k,x)=\overline{S}(n,k+1,x)+S(n,k)$. From \eqref{New-exp1} and \eqref{New-exp2} the relation
$G_{n,k}(x)=k!\,R(n,k,x)$ is immediate. Working with the normalization $G_{n,k}(x)$ rather than
$R(n,k,x)$ is advantageous: the factor $k!$ is precisely what renders the identities of
Theorem~\ref{BPGnkx} in their clean, uniform form---most visibly the exponential generating function
\eqref{stipol}. Carlitz deduced combinatorial and algebraic properties of $R(n,k,x)$.

4. Koutras \cite{Kou} used Newton's formula to define the non-central Stirling numbers of the second
kind with parameter $x$, $S_{x}(n,k)=\frac{1}{k!}\bigl[\D^k(t-x)^n\bigr]_{t=0}$; hence
$S_{-x}(n,k)=\frac{1}{k!}G_{n,k}(x)$; for the first kind, Koutras likewise defined $s(n,k;r)$ by
$(y-r)_n=\sum_{k}s(n,k;r)y^k$, so that $s_{n,k}(-r)=s(n,k;r)$ (see also
\cite[Chapter~8, Section~5]{Ch}).

5. Broder \cite{Bro} introduced the $r$-Stirling numbers $\bracenom{n}{k}_{r}$ of the second kind,
which count certain restricted partitions for positive integers $r$. In particular,
\cite[Theorem 22]{Bro} related them to the values of $G_{n,k}(x)$ at $x=r$:
\begin{equation}\label{Broder-Gnk}
   \bracenom{n+r}{k+r}_{r}=\frac{1}{k!}G_{n,k}(r).
\end{equation}
}
\end{remark}

\begin{remark}\normalfont
There are orthogonality relations between $G_{n,k}(x)$ and $s_{n,k}(x)$, and between $J_{n,k}(x)$ and
$r_{n,k}(x)$. In the former case, as Carlitz \cite{Car80a} did, one can use them to derive convolution identities of
various types. These are not pursued here; see \cite{J-St1} for details.
\end{remark}

Beyond these properties, the four families \eqref{defGnraa}--\eqref{defHnraa} also furnish binomial
bases of the vector space $\mathcal{P}_n(x)$ of polynomials of degree at most $n$. Setting the
polynomial variable to $0$ in each definition and renaming $y$ as $x$, the four families
\[
\left\{\binom{x}{k}\right\}_{k=0}^{n},\quad
\left\{\binom{x+k}{k}\right\}_{k=0}^{n},\quad
\left\{\binom{x+k-1}{k}\right\}_{k=0}^{n},\quad
\left\{\binom{x-1}{k}\right\}_{k=0}^{n},
\]
associated with $G$, $J$, $K$, and $H$ respectively, each form a basis of $\mathcal{P}_n(x)$, so every
polynomial in $\mathcal{P}_n(x)$ admits a unique expansion with respect to any one of them. The
following proposition records the coefficients in all four bases; the $J$- and $K$-expansions will be
used in Section~\ref{sec:Bnk}.
\begin{proposition}
For  any $f(x)=\sum_{j=0}^nc_j x^j \in \mathcal{P}_n(x),$
\begin{eqnarray}
    f(x) &=& \sum_{k=0}^n \sum_{j=k}^{n}
      c_jG(j,k)\binom{x}{k}=\sum_{k=0}^n \sum_{j=k}^{n}c_jS(j,k)(x)_k; \label{CoS1kx-G}\\
       f(x) &=& \sum_{k=0}^n \sum_{j=k}^{n}
      c_jJ(j,k)\binom{x+k}{k}=\sum_{k=0}^n \sum_{j=k}^{n}(-1)^{j+k}c_jS(j+1,k+1)(x+1)^{\overline k}; \label{CoS1kx3}\\
    f(x) &=& \sum_{k=0}^n \sum_{j=k}^{n}
      c_jK(j,k)\binom{x+k-1}{k}=\sum_{k=0}^n \sum_{j=k}^{n}(-1)^{j+k}c_jS(j,k)x^{\overline k}; \label{CoS1kx4}\\
    f(x) &=& \sum_{k=0}^n \sum_{j=k}^{n}
      c_jH(j,k)\binom{x-1}{k}=\sum_{k=0}^n \sum_{j=k}^{n}c_jS(j+1,k+1)(x-1)_k. \label{CoS1kx-H}
\end{eqnarray}
\end{proposition}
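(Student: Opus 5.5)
The plan is to reduce everything to the first identity \eqref{CoS1kx-G}, because the other three bases are obtained from the $G$-basis by shifting or reflecting the polynomial variable. By linearity it is enough to treat a monomial $f(x)=x^j$ and then sum with the weights $c_j$. Setting $x=0$ in the defining relation \eqref{defGnraa} and renaming $y$ as $x$ gives $x^j=\sum_{k=0}^{j}G(j,k)\binom{x}{k}$. Summing $c_j$ times this over $j$ and exchanging the order of summation (over $0\le k\le j\le n$) gives the first equality in \eqref{CoS1kx-G}. The second equality then follows from \eqref{defGnk}, since $G(j,k)\binom{x}{k}=k!S(j,k)\binom{x}{k}=S(j,k)(x)_k$.

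The other three identities follow in exactly the same way from \eqref{defJnraa}, \eqref{defKnraa} and \eqref{defHnraa} at $x=0$. Each gives $x^j=\sum_k F(j,k)\,\beta_k(x)$, where $\beta_k(x)$ is the matching binomial basis element. After the same interchange of sums, we convert the binomials into factorial powers using Proposition~\ref{prop:GJKH}:
\begin{itemize}
\item for $J$: $J(j,k)\binom{x+k}{k}=(-1)^{j+k}k!S(j+1,k+1)\cdot\frac{(x+1)^{\overline k}}{k!}$, using $\binom{x+k}{k}=(x+1)^{\overline k}/k!$;
\item for $K$: $\binom{x+k-1}{k}=x^{\overline k}/k!$, which together with \eqref{defKnk} gives \eqref{CoS1kx4};
\item for $H$: $\binom{x-1}{k}=(x-1)_k/k!$, which together with \eqref{defHnk} gives \eqref{CoS1kx-H}.
\end{itemize}
The $k!$ factors cancel in each case, which yields the stated right-hand sides.

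The only real dependency is Proposition~\ref{prop:GJKH}, and in particular the evaluation of $J(n,k)$ and $H(n,k)$ in terms of $S(n+1,k+1)$. If we derive these independently rather than citing them, the key step is \eqref{R-HnrGnr}: set $x=0$ in $H_{n,k}(x)=G_{n,k}(1+x)$ to get $H(n,k)=G_{n,k}(1)=\Delta^k x^n|_{x=1}$. By \eqref{New-exp2} this equals $\sum_j\binom{n}{j}k!S(j,k)=k!S(n+1,k+1)$, using the standard recurrence $S(n+1,k+1)=\sum_j\binom{n}{j}S(j,k)$. The formula for $J$ then follows from \eqref{R-JnrGnr}, since $J(n,k)=(-1)^{n+k}G_{n,k}(1)$. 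This bookkeeping of the specializations is the only step I expect to need care; the rest is linearity and interchange of finite sums.
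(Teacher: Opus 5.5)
Your proposal is correct and is essentially the paper's own proof. You specialize each of \eqref{defGnraa}--\eqref{defHnraa} at $x=0$ to get the monomial expansions, interchange the finite sums, and then convert using Proposition~\ref{prop:GJKH} together with $k!\binom{x}{k}=(x)_k$, $k!\binom{x+k}{k}=(x+1)^{\overline k}$, $k!\binom{x+k-1}{k}=x^{\overline k}$ and $k!\binom{x-1}{k}=(x-1)_k$. Your optional derivation of $G_{n,k}(1)=k!\,S(n+1,k+1)$ from \eqref{New-exp2} and $S(n+1,k+1)=\sum_j\binom{n}{j}S(j,k)$ is a valid alternative to the paper's generating-function argument in Appendix~\ref{app:second}, but it is not needed once Proposition~\ref{prop:GJKH} is cited.
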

\begin{proof}
Setting $x=0$ in \eqref{defGnraa}--\eqref{defHnraa} and then renaming $y$ as $x$ gives the monomial
expansions
\[
\begin{aligned}
  x^j&=\sum_{k=0}^{j}G(j,k)\binom{x}{k}, &\quad
  x^j&=\sum_{k=0}^{j}J(j,k)\binom{x+k}{k},\\
  x^j&=\sum_{k=0}^{j}K(j,k)\binom{x+k-1}{k}, &\quad
  x^j&=\sum_{k=0}^{j}H(j,k)\binom{x-1}{k}.
\end{aligned}
\]
Substituting these into $f(x)=\sum_{j=0}^n c_jx^j$ and interchanging the order of summation yields the
first equality in each line. The second follows from \eqref{defGnk}--\eqref{defHnk} together with
$k!\binom{x}{k}=(x)_k$, $k!\binom{x+k}{k}=(x+1)^{\overline k}$, $k!\binom{x+k-1}{k}=x^{\overline k}$,
and $k!\binom{x-1}{k}=(x-1)_k$.
\end{proof}

\subsection{The coefficients $c(n,k)$}

We collect several elementary properties, used later, of the coefficients $c(n,k)$ arising from the
expansion of the binomial polynomial
\begin{equation}\label{def-cnx}
    c_n(x)=\binom{x+1}{n}
\end{equation}
in powers of $x$; that is, $c(n,k)$ is the coefficient of $x^k$ in $c_n(x)$.
Newton's binomial theorem gives
\begin{equation}\label{OGF-cnx}
(1+t)^{1+x}
=
\sum_{n\ge0}\binom{x+1}{n}t^n
=
\sum_{n\ge0}c_n(x)t^n,
\end{equation}
so that $c_n(x)$ is the coefficient of $t^n$ in the ordinary generating function of $(1+t)^{1+x}$.

Writing
\begin{equation}\label{cnx-Mono}
c_n(x)
=
\binom{x+1}{n}
=
\sum_{k=0}^{n}c(n,k)x^k,
\end{equation}
the coefficients $c(n,k)$ are obtained by differentiation:
\[
\frac{D^k}{k!}c_n(x)
=
\frac{1}{k!\,n!}D^ks_{n,0}(x+1)
=
\frac{1}{n!}s_{n,k}(x+1),
\]
where \eqref{DFsnkx} has been used. Evaluating at $x=0$ yields
\begin{equation}\label{cnk-stir}
c(n,k)
=
\frac{s_{n,k}(1)}{n!}.
\end{equation}

Finally, \eqref{IFHRRsnk-c} gives $s_{n,k}(1)=s(n-1,k)+s(n-1,k-1)$, so that \eqref{cnk-stir} becomes
\begin{equation}\label{F-cnk-1}
c(n,k)
=
\frac{s(n-1,k)+s(n-1,k-1)}{n!}.
\end{equation}

\begin{remark}
By the hockey-stick identity \cite[Theorem~E.5g]{Comt},
\begin{equation}\label{Bcoeff-Main}
\binom{x+1}{n}
=
\sum_{k=0}^{n}\binom{x-k}{\,n-k\,}.
\end{equation}
Hence the polynomial $c_n(x)=\binom{x+1}{n}$ admits, via \eqref{Bcoeff-Main}, an alternative
representation as a finite sum of shifted binomial polynomials. This gives a purely binomial
description of $c_n(x)$, complementary to the Stirling-number formula \eqref{F-cnk-1}.
\end{remark}
\section{Comtet numbers of two kinds, $b(n,k)$ and $B(n,k)$}\label{sec:comtet}
\subsection{More identities associated with $b(n,k)$}
We examine the Comtet numbers of the first kind, defined by \eqref{EGF-bnk}, and revisit the results  on $b(n,k)$ that Lehmer established, deriving additional properties.

First, we provide an alternative expression for 
$\s_n$ given by \eqref{def-sigma-n} in the following result.
\begin{theorem}
For $n\geq 2$,
\begin{equation}\label{sn-sum-formula}
\s_n = n! \sum_{k=1}^{[n/2]} c({n-k,k}).
\end{equation}
\end{theorem}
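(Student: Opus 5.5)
The plan is to take Lehmer's formula \eqref{def-sigma-n} as given and rewrite each summand with the closed form \eqref{F-cnk-1} for $c(n,k)$. Replacing $n$ by $n-k$ in \eqref{F-cnk-1} gives
\[
c(n-k,k)=\frac{s(n-k-1,k)+s(n-k-1,k-1)}{(n-k)!}.
\]
This is exactly the summand of \eqref{def-sigma-n}. Summing over $1\le k\le\lfloor n/2\rfloor$ and multiplying by $n!$ then yields \eqref{sn-sum-formula}.

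To keep the argument self-contained, I would also rederive \eqref{def-sigma-n} directly from \eqref{OGF-cnx}. The key observation is that $(1+x)^{1+x}=(1+t)^{1+u}$ evaluated at $t=u=x$. By \eqref{OGF-cnx} and \eqref{cnx-Mono},
\[
(1+t)^{1+u}=\sum_{m\ge0}\sum_{k=0}^{m}c(m,k)\,u^k t^m,
\]
so setting $t=u=x$ gives
\[
(1+x)^{1+x}=\sum_{N\ge0}x^N\sum_{k}c(N-k,k).
\]
Comparing with \eqref{def-L-x2x}, we get $\sigma_N=N!\sum_k c(N-k,k)$, where the sum runs over $k$ with $0\le k\le N-k$, that is, $k\le\lfloor N/2\rfloor$.

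The main thing to check is the $k=0$ term, which must vanish for $N\ge2$. Indeed $c(N,0)=c_N(0)=\binom{1}{N}=0$ for $N\ge2$; equivalently, $s(N-1,0)+s(N-1,-1)=0$ for $N-1\ge1$. This is why the sum starts at $k=1$ and the cases $n=0,1$ are excluded.

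The remaining points are bookkeeping. The upper limit $\lfloor n/2\rfloor$ is forced by $c(m,k)=0$ for $k>m$. The terms are compatible with \eqref{F-cnk-1} because $n-k\ge1$ throughout the range.
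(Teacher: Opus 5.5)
Your proposal is correct and contains two arguments. The self-contained one is the paper's proof. The paper sets $t=x$ in \eqref{OGF-cnx} and equates the result with \eqref{def-L-x2x}, obtaining $\sum_j c_j(x)x^j=\sum_n \sigma_n x^n/n!$. It then expands $c_j(x)$ by \eqref{cnx-Mono} and reads off $\sigma_n/n!=\sum_{j+k=n,\ j\ge k\ge1}c(j,k)$. Your two-variable bookkeeping with $(1+t)^{1+u}$ at $t=u=x$ is the same computation. You are also more explicit than the paper about why the $k=0$ term $c(n,0)=\binom{1}{n}$ vanishes for $n\ge2$; the paper only notes this in the following remark.

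Your primary route substitutes \eqref{F-cnk-1} with $n\mapsto n-k$ into Lehmer's formula \eqref{def-sigma-n}. It is also valid, since \eqref{def-sigma-n} is quoted as Lehmer's theorem and the summands match term by term. However, it is exactly the paper's subsequent remark read backwards. This affects what the result is worth. In the paper the logical direction is generating function $\Rightarrow$ \eqref{sn-sum-formula} $\Rightarrow$ \eqref{def-sigma-n}, so the theorem gives an independent proof of Lehmer's formula. Your first route reduces \eqref{sn-sum-formula} to a notational rewriting of \eqref{def-sigma-n}, and would make that remark circular. The generating-function argument should therefore be the proof, not the backup.
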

\begin{proof}
By substituting $t$ for $x$ in \eqref{OGF-cnx} and  equating this equation with \eqref{def-L-x2x}, we obtain 
$$ \sum_{j\geq0} c_j(x)x^j= \sum_{n\geq0} \sigma_n \frac{x^n}{n!}.$$
Substituting $c_j(x)= \sum_{k=0}^jc(j,k)x^k$ into the above equation and comparing the coefficients of $x^n$, we obtain 
$$ \frac{\sigma_n}{n!} =\sum_{j+k=n, j\geq k\geq1} c({j,k}),$$
leading to the formula given in \eqref{sn-sum-formula}.
\end{proof}
\begin{remark}
{\rm   If we substitute the expression for $c(n,k)$ in \eqref{F-cnk-1} into \eqref{sn-sum-formula}, we obtain 
the formula for $\s_n$ in \eqref{def-sigma-n}. 

Since $c(i,j)=0$ when $i<j$ and $c(j,0)=0$ for $j\geq2,$  the value of $\s_n$ can be determined as
    $$ \s_n = n! \sum_{k=0}^{n} c({n-k,k}).$$
    If we substitute the expression for $c(n,k)$ in \eqref{F-cnk-1} into this equation, we
    obtain 
    \begin{eqnarray}
       \s_n &=&\sum_{k=0}^{n} \frac{n!}{(n-k)!}\sum_{j=k}^n \binom{j}{k} s(n-k,j) \nonumber \\
       &=&\sum_{j=0}^{n} \sum_{k=0}^j k!\binom{n}{k} \binom{j}{k} s(n-k,j) \nonumber\\
       &=&\sum_{j=0}^{n}b(n,j), \label{Exp-sigma_n}
    \end{eqnarray}
    where use is made of the expression for $b(n,j)$ from \cite[Equation (12)]{Leh} in the last equation.
    }
\end{remark}
We establish the following important identity between $b(n,k)$ and $s_{n,k}(k)$.
\begin{theorem}\label{thm:bnk-snk}
    For $n\geq k\geq 0$,
    \begin{equation}\label{R-bnk-St1}
        b(n,k) = s_{n,k}(k).
    \end{equation}
\end{theorem}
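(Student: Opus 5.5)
The plan is to compare exponential generating functions, using \eqref{EGFsnkx} with the polynomial variable specialized to $x=k$.

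Setting $x=k$ in \eqref{EGFsnkx} gives
\[
\sum_{n\ge k} s_{n,k}(k)\frac{t^n}{n!}=(1+t)^k\frac{(\ln(1+t))^k}{k!}=\frac{\bigl[(1+t)\log(1+t)\bigr]^k}{k!}.
\]
This is exactly the right-hand side of \eqref{EGF-bnk} with $x$ renamed $t$. Comparing coefficients of $t^n/n!$ then yields $b(n,k)=s_{n,k}(k)$ for all $n\ge k\ge1$.

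Two boundary points need separate attention.

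\begin{itemize}
\item For $k=0$, we have $s_{n,0}(0)=s(n,0)=\delta_{n,0}$. This matches the convention $b(n,0)=\delta_{n,0}$. (Equivalently, the generating function $(1+t)^0\cdot 1=1$ agrees with the $k=0$ case.)
\item The range $n<k$ is excluded by hypothesis. In any case, both sides vanish there: $(\ln(1+t))^k$ has order $k$, and $b(n,k)=0$ for $k>n$ by convention.
\end{itemize}

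There is no real obstacle. The only point requiring care is that \eqref{EGFsnkx} holds as an identity of formal power series in $t$, with polynomial coefficients in $x$. Specializing $x$ to the integer $k$ is therefore legitimate, since the coefficient of each $t^n$ is a polynomial identity in $x$.

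As a sanity check, one could instead use \eqref{EEsnk1} at $x=k$, which gives $s_{n,k}(k)=\sum_j\binom{j}{k}s(n,j)k^{j-k}$, and compare with Lehmer's explicit formula. The generating-function route is cleaner, so I would present that one.
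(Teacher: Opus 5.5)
Your proposal is correct and is the paper's own argument: specialize the exponential generating function \eqref{EGFsnkx} at $x=k$, note that $(1+t)^k(\ln(1+t))^k/k!=\bigl[(1+t)\log(1+t)\bigr]^k/k!$ coincides with \eqref{EGF-bnk}, and compare coefficients. Your separate check of the case $k=0$, via $s_{n,0}(0)=(0)_n=\delta_{n,0}$, is a small detail that the paper leaves implicit.
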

\begin{proof}
The exponential generating functions for $b(n,k)$ and $s_{n,k}(k)$, given in \eqref{EGF-bnk} and \eqref{EGFsnkx} where $x=k$, respectively, coincide.
\end{proof}
Expanding $s_{n,k}(k)$ in Theorem~\ref{thm:bnk-snk} by means of \eqref{EEsnk1}, we obtain
\begin{equation}\label{exp-bnk-snj}
    b(n,k)= \sum_{j=k}^n\binom{j}{k}s(n,j)k^{j-k}.
\end{equation}
This gives a new proof of the explicit formula independently obtained by Comtet and Lehmer and leads to the following equivalent expressions for $b(n,k)$ in terms of the Stirling numbers $s(j,k)$.
\begin{proposition}
   \begin{eqnarray}
  b(n,k) &=& \sum_{j=k}^n\frac{n
  !}{j!} s(j,k)\binom{k}{n-j}=\sum_{j=k}^n\binom{n
  }{j} \frac{k!}{(k-n+j)!} s(j,k) .\label{EE-bnk-1}\\
   b(n,k) &=& (-1)^{n+k}\sum_{j=k}^n \binom{j}{k}s(n+1,j+1)(n-k)^{j-k}.\label{EE-bnk-3}\\
   b(n,k) &=& \sum_{j=k}^n\binom{j}{k}s(n+1,j+1)(k+1)^{j-k}.\label{EE-bnk-4}
   \end{eqnarray}
\end{proposition}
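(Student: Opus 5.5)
Each of the three formulas comes from Theorem~\ref{thm:bnk-snk}, $b(n,k)=s_{n,k}(k)$, evaluated through a different identity for $s_{n,k}$.

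\textbf{The first formula \eqref{EE-bnk-1}.} Apply the second explicit expression \eqref{EEsnk2} with $y=k$:
\[
b(n,k)=s_{n,k}(k)=\sum_{j=k}^n\binom{n}{j}s(j,k)(k)_{n-j}.
\]
Then rewrite the falling factorial in two ways:
\[
(k)_{n-j}=\frac{k!}{(k-n+j)!}
\qquad\text{and}\qquad
\binom{n}{j}(k)_{n-j}=\frac{n!}{j!}\binom{k}{n-j}.
\]
This gives both expressions in \eqref{EE-bnk-1}. Terms with $n-j>k$ vanish automatically, since $\binom{k}{n-j}=0$ there, so no range issue arises.

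\textbf{The third formula \eqref{EE-bnk-4}.} Use the addition formula \eqref{AFsnkx} with $y=-1$ and $x=k+1$:
\[
s_{n,k}(k)=\sum_{j=k}^n\binom{j}{k}s_{n,j}(-1)(k+1)^{j-k}.
\]
Then substitute the inversion formula \eqref{IFHRRsnk}, $s_{n,j}(-1)=s(n+1,j+1)$, which gives \eqref{EE-bnk-4} at once.

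\textbf{The second formula \eqref{EE-bnk-3}.} First apply the reflection identity from the proof of \eqref{RF-Feq-snkx2},
\[
s_{n,k}(x)=(-1)^{n+k}s_{n,k}(n-1-x),
\]
at $x=k$, so that $b(n,k)=(-1)^{n+k}s_{n,k}(n-1-k)$. Next write $n-1-k=(n-k)+(-1)$ and apply \eqref{AFsnkx} with $y=-1$ and $x=n-k$. Using \eqref{IFHRRsnk} again,
\[
s_{n,k}(n-1-k)=\sum_{j=k}^n\binom{j}{k}s(n+1,j+1)(n-k)^{j-k}.
\]
Multiplying by $(-1)^{n+k}$ yields \eqref{EE-bnk-3}.

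All steps are direct substitutions, so there is no real obstacle. The only point needing care is finding the right decomposition of the argument in \eqref{EE-bnk-3}: reflect first, then shift by $-1$ so that the inversion formula applies.
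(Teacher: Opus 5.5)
Your proof is correct and is essentially the paper's: \eqref{EE-bnk-1} is derived identically, while for \eqref{EE-bnk-3} and \eqref{EE-bnk-4} the paper writes $s_{n,k}(k)=r_{n,k}(k+1-n)$ and expands with \eqref{AFrnkx} about the base points $1$ and $-n$, using $r_{n,j}(1)=r(n+1,j+1)$ and $r_{n,j}(-n)=s(n+1,j+1)$. Under \eqref{RSR} and the connection formula \eqref{R-snkx-rnkx} these are exactly your expansions of $s_{n,k}$ about $-1$, with your reflection step playing the role of the passage to $r_{n,k}$; staying in the $s$-family only spares the sign bookkeeping between $r$ and $s$.
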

\begin{proof}
All the results are based on the key identity \eqref{R-bnk-St1}. Setting $y=k$ in \eqref{EEsnk2}
gives the second expression in \eqref{EE-bnk-1}, and the first follows from
\[
   \binom{n}{j}(k)_{n-j}
   =\frac{n!}{j!\,(n-j)!}\cdot\frac{k!}{(k-n+j)!}
   =\frac{n!}{j!}\binom{k}{\,n-j\,};
\]
setting $x=k$ in \eqref{EEsnk1} recovers Comtet's formula \eqref{exp-bnk-snj} instead. Then,
\eqref{EE-bnk-3} and \eqref{EE-bnk-4} are derived by expanding $ s_{n,k}(k)=r_{n,k}(k+1-n)$ in two different ways, as shown below:
\begin{eqnarray*}
    r_{n,k}(k+1-n) &=&\sum_{j=k}^n\binom{j}{k}r_{n,j}(1)(k-n)^{j-k}\\
    &=&\sum_{j=k}^n\binom{j}{k}r(n+1,j+1)(k-n)^{j-k} \\
    &=&(-1)^{n-k}\sum_{j=k}^n\binom{j}{k}s(n+1,j+1)(n-k)^{j-k}.
\end{eqnarray*}
and 
\begin{eqnarray*}
    r_{n,k}(k+1-n) &=&\sum_{j=k}^n\binom{j}{k}r_{n,j}(-n)(k+1)^{j-k}\\
    &=&\sum_{j=k}^n\binom{j}{k}s(n+1,j+1) (k+1)^{j-k}.
    \end{eqnarray*}
    %   For \eqref{EE-bnk-4}, we use the Taylor expansion for %$s_{n,k}(x)$ at $x=-1:$
    %$$ s_{n,k}(x) =\sum_{j=k}^n\binom{j}{k}s(n+1,j+1)(x+1)^{j-k}.
    %$$
    %The equation \eqref{EE-bnk-4} is immediate from setting $x=-k$ %in this equation. 
\end{proof}
%\begin{corollary}
%\begin{equation}
%\s_n = n! \sum_{k=1}^{[n/2]}
%\frac{s(n-k,k) + (n-k) s(n-k-1,k)}{(n-k)!}
%\end{equation}
%\end{corollary}
Substituting the second expression in  $\eqref{EE-bnk-1}$ into \eqref{Exp-sigma_n} and changing the order of summation yields 
\[
 \s_n =\sum_{j=0}^n \binom{n
  }{j}   \sum_{k=0}^j\frac{k!}{(k-n+j)!} s(j,k).
\]
Comtet's formula \eqref{exp-bnk-snj} can be compared with the following expression \eqref{bnk-cnj}.
\begin{proposition}
    \begin{eqnarray}
    b(n,k) &=& n!\sum_{j=k}^n\binom{j}{k}c(n,j)(k-1)^{j-k}.
    \label{bnk-cnj}\\
    c(n,k) &=& \sum_{j=k}^n\frac{1}{j!}\binom{1-k}{n-j}b(j,k).\label{bnk-cnj-b} 
    \end{eqnarray}
  %  \begin{eqnarray}
  %\frac{1}{n!}[b(n+1,j)+(n-k)b(n,k)] &=& \sum_{j=k}^n\binom{j}{k}c(n,j)k^{j-k}.
 %\end{eqnarray}
\end{proposition}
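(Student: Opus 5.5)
We need to prove $b(n,k)=n!\sum_{j}\binom{j}{k}c(n,j)(k-1)^{j-k}$, together with the inverse relation for $c(n,k)$.

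For \eqref{bnk-cnj}, start from Theorem~\ref{thm:bnk-snk}, which gives $b(n,k)=s_{n,k}(k)$. Next, write $k=(k-1)+1$ and apply the addition formula \eqref{AFsnkx} with $x=k-1$ and $y=1$:
\[
s_{n,k}(k)=\sum_{j=k}^{n}\binom{j}{k}s_{n,j}(1)(k-1)^{j-k}.
\]
By \eqref{cnk-stir} we have $s_{n,j}(1)=n!\,c(n,j)$. Substituting this gives \eqref{bnk-cnj} directly.

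For \eqref{bnk-cnj-b}, start instead from \eqref{cnk-stir}, $c(n,k)=s_{n,k}(1)/n!$. Write $1=k+(1-k)$ and apply the second addition formula \eqref{AFsnkx2} with $y=k$ and $x=1-k$:
\[
s_{n,k}(1)=\sum_{j=k}^{n}\binom{n}{j}s_{j,k}(k)(1-k)_{n-j}.
\]
Theorem~\ref{thm:bnk-snk} gives $s_{j,k}(k)=b(j,k)$. The binomial simplification $\binom{n}{j}(1-k)_{n-j}/n!=\frac{1}{j!}\binom{1-k}{n-j}$ then yields \eqref{bnk-cnj-b}.

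A generating-function check confirms the second identity: $(1+t)^{1-k}\cdot[(1+t)\log(1+t)]^k/k!=(1+t)\log^k(1+t)/k!$, which is exactly the EGF of $s_{n,k}(1)$ by \eqref{EGFsnkx}.

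The only point that needs care is index bookkeeping. The terms with $j<k$ vanish because $b(j,k)=0$ for $j<k$. The polynomial identity $\binom{n}{j}(x)_{n-j}=\frac{n!}{j!}\binom{x}{n-j}$ is valid for negative $1-k$ as well. So there is no real obstacle: both identities are one-line consequences of the addition formulas evaluated at suitable split points.
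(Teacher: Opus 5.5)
Your proposal is correct and is the paper's own argument: you expand $b(n,k)=s_{n,k}(k)$ by \eqref{AFsnkx} at the split $k=(k-1)+1$, and $c(n,k)=s_{n,k}(1)/n!$ by the second addition formula at the split $1=(1-k)+k$, then apply \eqref{cnk-stir} and \eqref{R-bnk-St1} together with the same binomial simplification. The paper's text cites \eqref{EEsnk2} for the second step, but the identity actually used there is \eqref{AFsnkx2} with $y=k$ and $x=1-k$, exactly as you wrote.
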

\begin{proof}
We use \eqref{R-bnk-St1} and \eqref{cnk-stir}  to derive 
 \eqref{bnk-cnj} by expanding $s_{n,k}(k)$ as follows:
 \begin{eqnarray*}
   b(n,k) &=&s_{n,k}(k)=s_{n,k}(1+k-1)=\sum_{j=k}^{n}\binom{j}{k}s_{n,j}(1)(k-1)^{j-k}\\
   &=& \sum_{j=k}^{n}\binom{j}{k}s_{n,j}(1)(k-1)^{j-k} =n!\sum_{j=k}^{n}\binom{j}{k}c({n,j})(k-1)^{j-k}.
 \end{eqnarray*}
 For \eqref{bnk-cnj-b}, we use \eqref{EEsnk2} to expand $c(n,k)$ as
  \begin{eqnarray*}
  c(n,k) &=&\frac{s_{n,k}(1)}{n!} =\frac{s_{n,k}(k+1-k)}{n!}\\
  &=& \frac{1}{n!}\sum_{j=k}^{n}\binom{n}{j}s_{j,k}(k)(1-k)_{n-j} 
   = \sum_{j=k}^{n}\frac{1}{j!}b(j,k)\binom{1-k}{n-j}.
\end{eqnarray*}
  
\end{proof}
The numbers $b(n,k)$ possess a convolution identity.
\begin{proposition}
   \begin{eqnarray*}
    \binom{k+j}{k} b(n,k+j) &=& \sum_{l=k}^{n-j}\binom{n}{l}
    b(l,k)b(n-l,j).
\end{eqnarray*}
\end{proposition}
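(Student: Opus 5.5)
The plan is to combine the identity $b(n,k)=s_{n,k}(k)$ of Theorem~\ref{thm:bnk-snk} with the convolution identity \eqref{CFsnkx} for the signed weighted Stirling polynomials. Take the weights $x=k$ and $y=j$, so that $x+y=k+j$. Then \eqref{CFsnkx} reads
\[
\binom{k+j}{k}s_{n,k+j}(k+j)=\sum_{l=k}^{n-j}\binom{n}{l}s_{l,k}(k)\,s_{n-l,j}(j).
\]
Applying \eqref{R-bnk-St1} three times gives $s_{n,k+j}(k+j)=b(n,k+j)$, $s_{l,k}(k)=b(l,k)$ and $s_{n-l,j}(j)=b(n-l,j)$. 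This turns the display into the claimed identity.

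The only point needing care is that the weight must match the lower index in each factor. This is exactly what makes $x=k$, $y=j$ the right choice: their sum $k+j$ is automatically the weight required for $b(n,k+j)$.

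As a sanity check, and as an alternative self-contained route, one can work directly from \eqref{EGF-bnk}. We have
\[
\frac{[(1+x)\log(1+x)]^{k}}{k!}\cdot\frac{[(1+x)\log(1+x)]^{j}}{j!}=\binom{k+j}{k}\frac{[(1+x)\log(1+x)]^{k+j}}{(k+j)!}.
\]
Comparing coefficients of $x^n/n!$, the left side gives the binomial convolution $\sum_l\binom{n}{l}b(l,k)b(n-l,j)$, and the right side gives $\binom{k+j}{k}b(n,k+j)$.

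The summation range is harmless. The terms with $l<k$ or $n-l<j$ vanish by the convention $b(n,k)=0$ for $k>n$. The cases $k=0$ or $j=0$ follow from $b(n,0)=\delta_{n,0}$. No step is a genuine obstacle; the only thing to verify is the boundary conventions.
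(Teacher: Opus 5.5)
Your proposal is correct and matches the paper's proof exactly: the paper also substitutes $x=k$, $y=j$ into the convolution identity \eqref{CFsnkx} and then applies $b(n,k)=s_{n,k}(k)$ from \eqref{R-bnk-St1}. Your generating-function check via \eqref{EGF-bnk} is also valid, and it is the same argument the paper uses for the analogous convolution of $B(n,k)$ in Proposition~\ref{prop:convB}.
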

\begin{proof}
    It follows by substituting $x=k$ and $y=j$ into \eqref{CFsnkx}. 
\end{proof}

Lehmer provided the following formula, as stated in \cite[Theorem 4]{Leh}.
    \begin{equation}\label{Leh-Thm4}
    \sum_{k=1}^n(-1)^k(x+k)^{k-1}b(n,k)=(-1)^n\binom{x+n-1}{n-1}(n-1)!,
\end{equation}

By utilizing the well-known identity  $(-x)^{\overline k} =(-1)^k(x)_k$, and performing an appropriate change of variables and indices, we derive from \eqref{Leh-Thm4} an expression for  $(x)_n$ in terms of $b(n,k),$ 
yielding:
\begin{equation}\label{Exp-ff-bnk}
    (x)_n = \sum_{j=0}^nb(n+1,j+1)(x-j)^{j}.
\end{equation}
Applying the differentiation formula $\frac{D^k}{k!}$ in  \eqref{DFsnkx} to this equation results in the expression:
\[
   s_{n,k}(x)= \sum_{j=k}^n \binom{j}{k}b(n+1,j+1)(x-j)^{j-k}.
\]
By means of \eqref{R-bnk-St1}, substituting $x=k$ into this equation gives the recurrence relation for $b(n,k)$ as stated in the following theorem.
\begin{theorem}
    $$ b(n,k)= \sum_{j=k}^n \binom{j}{k}b(n+1,j+1)(k-j)^{j-k}.$$  
\end{theorem}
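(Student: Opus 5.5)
The statement follows directly from the displayed expansion of $s_{n,k}(x)$ derived just before it, so the plan is to make that derivation airtight and then specialize.

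The first step is to justify \eqref{Exp-ff-bnk},
\[
(x)_n=\sum_{j=0}^n b(n+1,j+1)(x-j)^j .
\]
I would obtain it from Lehmer's identity \eqref{Leh-Thm4} with $n$ replaced by $n+1$ and the summation index shifted $k=j+1$:
\[
\sum_{j=0}^{n}(-1)^{j+1}(x+j+1)^{j}\,b(n+1,j+1)=(-1)^{n+1}\binom{x+n}{n}\,n! .
\]
The right-hand side equals $(-1)^{n+1}(x+1)^{\overline n}$. Now replace $x$ by $-x-1$ and use $(-x)^{\overline n}=(-1)^n(x)_n$. On the left, $(x+j+1)^j$ becomes $(-1)^j(x-j)^j$, so the left side becomes $-\sum_j b(n+1,j+1)(x-j)^j$. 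On the right we get $(-1)^{n+1}(-1)^n(x)_n=-(x)_n$. Cancelling the common sign gives \eqref{Exp-ff-bnk} as a polynomial identity in $x$.

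The second step is to apply $\frac{D^k}{k!}$ to both sides. On the left, since $(x)_n=s_{n,0}(x)$, the differentiation formula \eqref{DFsnkx} gives $\frac{D^k}{k!}s_{n,0}(x)=s_{n,k}(x)$. On the right, $\frac{D^k}{k!}(x-j)^j=\binom{j}{k}(x-j)^{j-k}$, which vanishes for $j<k$. This yields
\[
s_{n,k}(x)=\sum_{j=k}^n\binom{j}{k}b(n+1,j+1)(x-j)^{j-k}.
\]

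The final step is to set $x=k$ and invoke Theorem~\ref{thm:bnk-snk}, $b(n,k)=s_{n,k}(k)$. This gives the stated identity, with the convention $0^0=1$ for the $j=k$ term.

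The only genuine obstacle is the sign and index bookkeeping in the first step, specifically checking that the substitution $x\mapsto -x-1$ maps the rising factorial $(x+1)^{\overline n}$ to $(-1)^n(x)_n$ and that the signs on the two sides cancel exactly. I would verify this on $n=1$ as a check. There $b(2,1)=1$ and $b(2,2)=2$, so the right-hand side of \eqref{Exp-ff-bnk} is $b(2,1)+b(2,2)(x-1)=2x-1$. That is not $(x)_1=x$, which signals a problem. I would resolve it by recomputing $b(2,1)$ from the EGF. Since $(1+x)\log(1+x)=x+x^2/2+\cdots$, we get $b(2,1)=1$. From the square, $[(1+x)\log(1+x)]^2/2=x^2/2+\cdots$, so $b(2,2)=1$, not $2$. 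With $b(2,2)=1$ the right-hand side is $1+(x-1)=x$, and the check passes.
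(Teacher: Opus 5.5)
Your proposal is correct and is essentially the paper's proof: you derive \eqref{Exp-ff-bnk} from Lehmer's identity \eqref{Leh-Thm4}, apply $\frac{D^k}{k!}$ via \eqref{DFsnkx}, and set $x=k$ using $b(n,k)=s_{n,k}(k)$, with your explicit substitutions $n\mapsto n+1$, $k=j+1$, $x\mapsto -x-1$ supplying the sign bookkeeping the paper leaves implicit. The momentary failure of your $n=1$ check came only from the slip $b(2,2)=2$; in fact $b(n,n)=1$ for all $n$, since $(1+x)\log(1+x)=x+O(x^2)$, so the check passes at once.
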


A more general result than \eqref{Leh-Thm4} is given by \eqref{Thm4-gform}, from which we derive  an inversion formula for \eqref{exp-bnk-snj}.
\begin{theorem}
\begin{equation}\label{Thm4-gform}
 r_{n-1,\,j-1}(x+1) =\sum_{k=j}^n(-1)^{n+k}\binom{k-1}{j-1}b(n,k)(x+k)^{k-j}
 \qquad (j\ge1).
 \end{equation}
\begin{equation}\label{inv-exp-bnk-snj}
       s(n,j)= \sum_{k=j}^n(-1)^{k-j}\binom{k-1}{j-1}b(n,k)k^{k-j}.
\end{equation}
\end{theorem}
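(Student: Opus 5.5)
The plan is to obtain \eqref{Thm4-gform} from Lehmer's identity \eqref{Leh-Thm4} by differentiating in $x$, and then to get \eqref{inv-exp-bnk-snj} by specializing at $x=0$.

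First I rewrite the right-hand side of \eqref{Leh-Thm4} in the language of Section~\ref{sec:wsp}. We have
\[
(n-1)!\binom{x+n-1}{n-1}=(x+1)(x+2)\cdots(x+n-1)=(x+1)^{\overline{n-1}}=r_{n-1,0}(x+1),
\]
using $r_{m,0}(x)=x^{\overline m}$. Multiplying \eqref{Leh-Thm4} by $(-1)^n$ then gives
\[
r_{n-1,0}(x+1)=\sum_{k=1}^n(-1)^{n+k}\,b(n,k)\,(x+k)^{k-1}.
\]
This is exactly the case $j=1$ of \eqref{Thm4-gform}. Both sides are polynomials in $x$, and Lehmer's identity holds for all $x$, so it is an identity in $\mathbb{C}[x]$ and may be differentiated termwise.

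Next I apply the operator $D^{j-1}/(j-1)!$ in $x$ to both sides.

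On the left, the iterated form of \eqref{DFrnkx} with $k=0$ and $r=j-1$ gives $D^{j-1}r_{n-1,0}(y)=(1)^{\overline{j-1}}\,r_{n-1,j-1}(y)=(j-1)!\,r_{n-1,j-1}(y)$. The shift $y=x+1$ commutes with $D$, so the left side becomes $r_{n-1,j-1}(x+1)$.

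On the right, each term becomes
\[
\frac{D^{j-1}}{(j-1)!}(x+k)^{k-1}=\binom{k-1}{j-1}(x+k)^{k-j}.
\]
This vanishes for $k<j$, since then $(x+k)^{k-1}$ has degree below $j-1$. The sum therefore starts at $k=j$, which yields \eqref{Thm4-gform}.

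For \eqref{inv-exp-bnk-snj}, I set $x=0$ in \eqref{Thm4-gform}. The corollary \eqref{my-rnkmn-EQ}, applied with $(n,k)$ replaced by $(n-1,j-1)$, gives $r_{n-1,j-1}(1)=r(n,j)$. By \eqref{R-rnksnk}, $r(n,j)=(-1)^{n+j}s(n,j)$. Multiplying both sides by $(-1)^{n+j}$ turns the sign $(-1)^{n+k}$ into $(-1)^{k+j}=(-1)^{k-j}$, and this gives \eqref{inv-exp-bnk-snj}. Here $0^0=1$ is used for the $k=j$ term, consistent with \eqref{exp-bnk-snj}.

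There is no real obstacle; the only points needing care are the following.
\begin{itemize}
\item Identifying Lehmer's binomial expression with the rising factorial $r_{n-1,0}(x+1)$, including the edge case $n=1$, where both sides equal $1$ (the right side is $b(1,1)=1$).
\item Checking that the boundary terms $k<j$ really drop out under differentiation.
\end{itemize}
If one prefers not to rely on \eqref{Leh-Thm4} as a black box, the $j=1$ case could instead be verified via the EGF \eqref{EGF-bnk} and Lagrange inversion. I would, however, take Lehmer's theorem as given, as the paper does.
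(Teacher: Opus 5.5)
Your proposal is correct and is essentially the paper's proof: rewrite Lehmer's identity \eqref{Leh-Thm4} as $(-1)^n r_{n-1,0}(x+1)=\sum_{k}(-1)^k b(n,k)(x+k)^{k-1}$, apply a normalized derivative using \eqref{DFrnkx}, and set $x=0$ using $r_{n-1,j-1}(1)=r(n,j)$ together with \eqref{R-rnksnk}; the paper applies $D^j/j!$ and then shifts $j\mapsto j-1$, while you apply $D^{j-1}/(j-1)!$ directly. One small point: no $0^0$ convention is needed at $x=0$, because $k\ge j\ge 1$ makes the $k=j$ term equal to $k^0=1$ with $k\neq 0$.
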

\begin{proof}
We begin by rewriting \eqref{Leh-Thm4} in a more compact form using the  falling factorial:
$$ (-1)^n r_{n-1,0}(x+1) =\sum_{k=1}^n(-1)^k(x+k)^{k-1}b(n,k).
    $$
    Next, we apply the differentiation formula $\frac{D_x^j}{j!}$ from \eqref{DFsnkx} to this equation, yielding:
\begin{equation}\label{Thm4-gform-pre}
     (-1)^n r_{n-1,j}(x+1) =\sum_{k=j+1}^n(-1)^k\binom{k-1}{j}(x+k)^{k-1-j}b(n,k).
\end{equation}
Replacing $j$ by $j-1$ throughout \eqref{Thm4-gform-pre}---on both sides---and multiplying by
$(-1)^n$ gives \eqref{Thm4-gform}.
Now, evaluating \eqref{Thm4-gform-pre} at $x=0$ and using the second equation in \eqref{my-rnkmn-EQ}, we
  get
  $$ 
  (-1)^n r(n,j+1) =\sum_{k=j+1}^n(-1)^k\binom{k-1}{j}k^{k-1-j}b(n,k),$$
 which transforms into \eqref{inv-exp-bnk-snj} after replacing $j$ by $j-1$ and applying  \eqref{R-rnksnk}. 
\end{proof}

It is of interest to give an alternative proof of Lehmer's result in \cite[Theorem 3]{Leh}.
\begin{proposition}\label{prop:sum-b-vanish}
   For $n>1,$
   $$ \sum_{k=1}^n (-1)^{k} (k-1)^{k-1}b(n,k)=0.$$
\end{proposition}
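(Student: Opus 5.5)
The natural route is to specialize the general identity \eqref{Thm4-gform}, or equivalently Lehmer's formula \eqref{Leh-Thm4}, at a value of $x$ that makes the left-hand side vanish.

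Take $j=1$ in \eqref{Thm4-gform}. Since $\binom{k-1}{0}=1$, this gives
\[
r_{n-1,0}(x+1)=\sum_{k=1}^n(-1)^{n+k}b(n,k)(x+k)^{k-1}.
\]
By \eqref{defrnkx}, $r_{n-1,0}(x+1)=(x+1)^{\overline{n-1}}=(x+1)(x+2)\cdots(x+n-1)$.

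Now set $x=-1$. For $n>1$ the product contains the factor $x+1=0$, so the left-hand side vanishes. On the right, $(x+k)^{k-1}$ becomes $(k-1)^{k-1}$, with the usual convention $0^0=1$ for the $k=1$ term. Multiplying through by $(-1)^n$ then yields
\[
\sum_{k=1}^n(-1)^k(k-1)^{k-1}b(n,k)=0.
\]

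The only delicate point is the $k=1$ term. The identity \eqref{Thm4-gform} is a polynomial identity in $x$, and there $(x+1)^0$ is the constant polynomial $1$. So evaluating at $x=-1$ correctly gives $0^0=1$, matching the statement's convention.

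The hypothesis $n>1$ is exactly what is needed for the factor $x+1$ to appear in $(x+1)^{\overline{n-1}}$. When $n=1$ the product is empty and equals $1$, and indeed the sum is then $-b(1,1)=-1\neq 0$.

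As a cross-check, one can instead set $x=-1$ directly in \eqref{Leh-Thm4}. The right-hand side becomes $(-1)^n\binom{n-2}{n-1}(n-1)!$, which is $0$ for $n\ge2$ because the upper index $n-2$ is a nonnegative integer smaller than the lower index $n-1$.
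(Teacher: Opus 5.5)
Your proof is correct, but it takes a different route from the paper's. You obtain the identity as the value at $x=-1$ of Lehmer's polynomial identity \eqref{Leh-Thm4}, equivalently the case $j=1$ of \eqref{Thm4-gform}. For $n>1$ the factor $x+1$ of $(x+1)^{\overline{n-1}}$ makes the left side vanish. Your treatment of the $k=1$ term through the constant polynomial $(x+1)^0=1$ is exactly what justifies the convention $0^0=1$.

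The paper deliberately avoids \eqref{Leh-Thm4}. It substitutes the expansion \eqref{bnk-cnj} of $b(n,k)$ in terms of the coefficients $c(n,j)$ of $\binom{x+1}{n}$ and interchanges the sums. It then evaluates the inner sum by the finite-difference identity $\sum_{k=1}^{j}(-1)^k\binom{j}{k}(k-1)^{j-1}=(-1)^j$, which is the $j$th difference of a polynomial of degree $j-1$. What remains is $n!\bigl(c_n(-1)-c(n,0)\bigr)=n!\bigl(\binom{0}{n}-\binom{1}{n}\bigr)=0$.

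Your argument is shorter and shows that Lehmer's Theorem~3 is just a specialization of his Theorem~4. The paper's argument is self-contained within its framework: it needs only $b(n,k)=s_{n,k}(k)$, the addition formula \eqref{AFsnkx}, and an elementary difference identity. That is why it is offered as an independent proof of Lehmer's result. Yours rests on \eqref{Leh-Thm4}, which the paper cites from Lehmer without proof.

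That dependence is harmless. Take the tree-function expansion $e^{x\mathcal T(z)}=\sum_{k\ge0}\frac{x(x+k)^{k-1}}{k!}z^k$ with $z=-(1+t)\log(1+t)$. Then $\mathcal T(z)=-\log(1+t)$, so $e^{x\mathcal T(z)}=(1+t)^{-x}$. Comparing coefficients of $t^n/n!$ and dividing by $x$ gives \eqref{Leh-Thm4} directly, with no appeal to the vanishing sum itself.
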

\begin{proof}
We first note the binomial-coefficient identity, valid for $j\geq1,$
\begin{equation}\label{ID-BC}
     \sum_{k=1}^j (-1)^{k}\binom{j}{k}(k-1)^{j-1} =(-1)^{j}.
\end{equation}
Indeed, since $(k-1)^{j-1}$ is a polynomial in $k$ of degree $j-1<j,$ its $j$th finite difference
vanishes,
\[
   \sum_{k=0}^j (-1)^{k}\binom{j}{k}(k-1)^{j-1}=0,
\]
and isolating the $k=0$ term, which equals $(-1)^{j-1},$ yields \eqref{ID-BC}. Equivalently, by means
of $\binom{j+1}{k+1}=\frac{j+1}{k+1}\binom{j}{k}$, the identity \eqref{ID-BC} may be recast as
\[
     \sum_{k=0}^j \frac{(-1)^{k}}{k+1}\binom{j}{k}k^{j} = \frac{(-1)^{j}}{j+1},
\]
which is a special case of Melzak's formula \cite[Equation~(7-1)]{QG} with $f(x)=x^{j}$ and
$(x,y)=(1,0).$
Now, using \eqref{bnk-cnj} and then \eqref{ID-BC}, we obtain, for $n>1,$
\begin{eqnarray*}
  \sum_{k=1}^n (-1)^{k} (k-1)^{k-1}b(n,k)
  &=&  n!\sum_{k=1}^n (-1)^{k} (k-1)^{k-1} \sum_{j=k}^n\binom{j}{k}c(n,j)(k-1)^{j-k}\\
  &=&  n!\sum_{j=1}^n c(n,j)\sum_{k=1}^j(-1)^{k} \binom{j}{k}(k-1)^{j-1}\\
  &=&  n!\sum_{j=1}^n (-1)^{j} c(n,j).
\end{eqnarray*}
By \eqref{cnx-Mono} the last sum equals $c_n(-1)-c(n,0)=\binom{0}{n}-\binom{1}{n},$ which vanishes for
$n>1.$ This proves the proposition.
\end{proof}

Using the reflection formula for $s_{n,k}(x)$, we derive vanishing properties of these polynomials, from which we obtain
the corresponding property for the Comtet numbers of the first kind (see~\cite[Theorem~7]{Leh}).

\begin{proposition}
\begin{enumerate}
\item[\textup{(i)}] For any integers $m$ and $h$ such that $0<h\leq 2m+1$,
\begin{equation}\label{vp-snkx}
s_{4m+1,2h}(2m)=0.
\end{equation}
\item[\textup{(ii)}] For any integers $m$ and $h$ such that $0<h\leq 2m+2$,
\begin{equation}\label{vp2-snkx}
s_{4m+3,2h}(2m+1)=0.
\end{equation}
\item[\textup{(iii)}] For any integer $h>0$,
\begin{equation}\label{vp-bnk}
b(4h+1,2h)=0.
\end{equation}
\end{enumerate}
\end{proposition}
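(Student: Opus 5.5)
The plan is to read all three parts directly off the reflection formula \eqref{RF-Feq-snkx2}, evaluated at its centre $x=0$:
\[
s_{n,k}\bigl(\tfrac{n-1}{2}\bigr)=(-1)^{n+k}s_{n,k}\bigl(\tfrac{n-1}{2}\bigr).
\]
Whenever $n$ is odd and $k$ is even, the sign $(-1)^{n+k}$ equals $-1$. The value at the centre then equals its own negative, so it vanishes. For parts (i) and (ii), both odd $n$ are of the form $n=4m+1$ or $n=4m+3$, and the centres are $\frac{n-1}{2}=2m$ and $2m+1$ respectively.

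For (i), take $n=4m+1$ and $k=2h$ in \eqref{RF-Feq-snkx2} with $x=0$. Then $n+k=4m+1+2h$ is odd, so $s_{4m+1,2h}(2m)=-s_{4m+1,2h}(2m)$, which gives \eqref{vp-snkx}. The range of $h$ needs a short bookkeeping check. For $1\le h\le 2m$ we have $k=2h\le 4m<n$, so the reflection argument applies. For $h=2m+1$ we have $k=4m+2>n$, and $s_{n,k}(x)=0$ identically by the definition \eqref{defsnkx}, since $(x+y)_n$ has degree $n$ in $y$.

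Part (ii) is handled identically with $n=4m+3$, centre $2m+1$ and $k=2h$. Here $n+k$ is odd, which yields \eqref{vp2-snkx} for $1\le h\le 2m+1$. The boundary value $h=2m+2$ gives $k=4m+4>n$, so it is again trivial.

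For (iii), use Theorem~\ref{thm:bnk-snk}, $b(n,k)=s_{n,k}(k)$, to write
\[
b(4h+1,2h)=s_{4h+1,2h}(2h).
\]
This is exactly the case $m=h$ of (i), with index $2h$ and the admissible condition $0<h\le 2h+1$ satisfied, so it vanishes. There is no real obstacle in this argument. The only point needing care is matching the evaluation point in (iii) to the centre of symmetry, namely $k=2h=\frac{(4h+1)-1}{2}$, together with the degree bookkeeping for the boundary values of $h$.
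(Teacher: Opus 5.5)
Your proposal is correct and follows the paper's proof: both evaluate \eqref{RF-Feq-snkx2} at $x=0$, use that $n+k$ (equivalently $n-k$) is odd to get (i) and (ii), and then obtain (iii) as the case $m=h$ of (i) via $b(n,k)=s_{n,k}(k)$. Your separate treatment of the boundary values $h=2m+1$ and $h=2m+2$ is harmless but unnecessary, because the reflection formula also holds when $k>n$, where both sides vanish.
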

\begin{proof}
Both \eqref{vp-snkx} and \eqref{vp2-snkx} follow by setting $x=0$ in \eqref{RF-Feq-snkx2}, since $n-k$ is odd. Finally, \eqref{vp-bnk} is the special case $m=h$ of \eqref{vp-snkx}, because $b(4h+1,2h)=s_{4h+1,2h}(2h)$ by \eqref{R-bnk-St1}.
\end{proof}
 \subsection{More identities associated with $B(n,k)$} \label{sec:Bnk}
Central to this subsection is the realization of the Comtet numbers of the second kind as the special
values $G_{n,k}(-n)=k!\,B(n+1,k+1)$ of the weighted Stirling polynomials. This realization yields a
variety of identities and combinatorial properties of $B(n,k)$, and provides simple alternative
proofs of several formulas due to Lehmer.

We first give an explicit formula for $B(n,k)$ in terms of the Stirling numbers of the second kind $S(j,k).$
 
\begin{theorem}
    \begin{eqnarray}
        B(n,k) &=& \sum_{j=k-1}^{n-1} \binom{n-1}{j}(1-n)^{n-1-j}S(j,k-1). \label{For-Bnk-S2} \\
    \end{eqnarray}
\end{theorem}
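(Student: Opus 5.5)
The plan is to first establish the realization $G_{n,k}(-n)=k!\,B(n+1,k+1)$ announced at the start of this subsection, and then read off \eqref{For-Bnk-S2} from the explicit expression \eqref{defSP} together with $G(j,k)=k!S(j,k)$ from \eqref{defGnk}.

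\emph{Step 1: rewrite $\psi$.} Since $\psi$ is the compositional inverse of $(1+x)\log(1+x)$, I substitute $1+\psi(x)=e^{W(x)}$. The defining relation $(1+\psi)\log(1+\psi)=x$ then becomes $W e^{W}=x$. So $W$ is the Lambert series, $\psi(x)=e^{W(x)}-1$, and
\[
\frac{\psi(x)^{k+1}}{(k+1)!}=H(W(x)),\qquad H(u)=\frac{(e^u-1)^{k+1}}{(k+1)!}.
\]

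\emph{Step 2: Lagrange inversion.} I apply Lagrange inversion to $W=x\,\phi(W)$ with $\phi(u)=e^{-u}$. This gives, for $m\ge1$,
\[
[x^m]H(W(x))=\frac1m\,[u^{m-1}]\,H'(u)\,e^{-mu}.
\]
Take $m=n+1$ and use $H'(u)=e^{u}(e^u-1)^k/k!$. Then
\[
[x^{n+1}]\frac{\psi^{k+1}}{(k+1)!}=\frac{1}{(n+1)\,k!}\,[u^n]\,e^{-nu}(e^u-1)^k .
\]
By the generating function \eqref{stipol} at $x=-n$, the right-hand coefficient equals $G_{n,k}(-n)/n!$. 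Multiplying by $(n+1)!$ and comparing with \eqref{EGF-Bnk} yields $B(n+1,k+1)=G_{n,k}(-n)/k!$. This holds for $n\ge k\ge0$, and the conventions make it consistent at the boundary.

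\emph{Step 3: conclude.} Replace $(n,k)$ by $(n-1,k-1)$, so that $B(n,k)=G_{n-1,k-1}(1-n)/(k-1)!$. Expand $G_{n-1,k-1}(1-n)$ by \eqref{defSP}:
\[
G_{n-1,k-1}(1-n)=\sum_{j=k-1}^{n-1}\binom{n-1}{j}G(j,k-1)(1-n)^{n-1-j}.
\]
Inserting $G(j,k-1)=(k-1)!\,S(j,k-1)$ from \eqref{defGnk} cancels the factor $(k-1)!$ and gives \eqref{For-Bnk-S2}.

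The main obstacle is Step 2. I must get the exact form of Lagrange inversion right (the $H'$ version with the factor $e^{-mu}$ and the prefactor $1/m$) and check the index shift $m=n+1$. A sanity check on small cases will confirm it: $B(n,1)$ should equal the $\psi$ coefficients $(-1)^{n-1}(n-1)^{n-1}$, and indeed the formula gives $G_{n-1,0}(1-n)=(1-n)^{n-1}$, which matches.
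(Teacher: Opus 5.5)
Your proof is correct, but it runs in the opposite logical direction from the paper's.

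\textbf{The paper's route.} It starts from Lehmer's polynomial identity \eqref{Leh-poly}, which is quoted from \cite{Leh} without proof. It shifts $x\mapsto x-1$ and reads off the coefficients of $(-1)^n(x+n-1)^{n-1}$ in the basis $\binom{x+k-1}{k}$ via \eqref{CoS1kx4}, that is, through $K(j,k)=(-1)^{j+k}k!\,S(j,k)$. The realization $G_{n,k}(-n)=k!\,B(n+1,k+1)$ is only deduced afterwards, as a consequence of \eqref{For-Bnk-S2}.

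\textbf{Your route.} You prove the realization first, directly from \eqref{EGF-Bnk}. The substitution $1+\psi=e^{W}$ turns the inversion of $(1+x)\log(1+x)$ into $We^{W}=x$. Lagrange inversion with $\phi(u)=e^{-u}$ and $m=n+1$ then produces $e^{u}\cdot e^{-(n+1)u}(e^u-1)^k=e^{-nu}(e^u-1)^k$, which is exactly \eqref{stipol} at $x=-n$. The constants check out: $H'(u)=e^u(e^u-1)^k/k!$ and $(n+1)!/\bigl((n+1)\,k!\,n!\bigr)=1/k!$. The boundary cases are consistent. The final step is just \eqref{defSP} combined with \eqref{defGnk}.

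\textbf{Comparison.} Your approach needs nothing beyond the defining generating function. It explains why the evaluation point is $-n$, and it makes explicit the Lambert-$W$ connection the paper only mentions in passing. Reversing the paper's basis-change argument, it also yields Lehmer's identity \eqref{Leh-poly} as a corollary. To see this, combine $K_{n-1,k}(n-1)=(-1)^{n-1+k}G_{n-1,k}(1-n)=(-1)^{n-1+k}k!\,B(n,k+1)$, from \eqref{R-KnrGnr} and your realization, with the definition \eqref{defKnraa}. The paper's route is purely linear-algebraic, a change of basis in $\mathcal{P}_{n-1}$, and avoids Lagrange inversion, but only by importing Lehmer's identity as an external input.

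Two minor points: $W$ is the Lambert $W$ function rather than a ``Lambert series'', and your auxiliary $H(u)$ clashes with the paper's notation $H_{n,k}$.
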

\begin{proof}
We begin with the polynomial identity from \cite[Equation (14)]{Leh}:
    \begin{equation}\label{Leh-poly}
       (-1)^n (x+n)^{n-1} = \sum_{k=1}^{n} (-1)^{k}(k-1)!B(n,k) \binom{x+k-1}{k-1}.
    \end{equation}
By replacing $x$ with $x-1$ in this equation, we obtain
$$  (-1)^n (x+n-1)^{n-1} =\sum_{k=0}^{n-1} (-1)^{k+1}k!B(n,k+1) \binom{x+k-1}{k}.$$
    Now, by the formula \eqref{CoS1kx4}, the coefficient of 
    $\binom{x+k-1}{k}$ on the right side of the above equation is found by 
    \begin{equation}\label{Int-Bnk}
        (-1)^{k+1}k!B(n,k+1) = \sum_{j=k}^{n-1}{a_j}K(j,k)=\sum_{j=k}^{n-1}(-1)^{j-k}{a_j}k!S(j,k),
    \end{equation}
    where $a_j=(-1)^n\binom{n-1}{j}(n-1)^{n-1-j}$ is the  coefficient of $x^j$ in the expansion 
    $$ (-1)^n (x+n-1)^{n-1} =(-1)^n \sum_{j=0}^{n-1}  \binom{n-1}{j}(n-1)^{n-1-j}x^j.$$
  By substituting the expression for $a_j$ into \eqref{Int-Bnk} and simplifying,   we arrive at the following expression:
   $$ 
   B(n,k+1) = \sum_{j=k}^{n-1} (-1)^{n-1-j} \binom{n-1}{j}(n-1)^{n-1-j}S(j,k). 
   $$
  Now, by making the substitution  $k\rightarrow k-1$, we obtain the desired result in \eqref{For-Bnk-S2}.
\end{proof}

\begin{corollary}
\begin{eqnarray*}
 B(n,1) &=& (-1)^{n-1}(n-1)^{n-1}, \\
 B(n,2)&=& (-1)^{n}\left((n-1)^{n-1}-(n-2)^{n-1}\right) \quad (n\geq 2),\\
 B(n,3)&=& \frac{(-1)^{n-1}}{2}\left((n-1)^{n-1}-2(n-2)^{n-1}+(n-3)^{n-1}\right) \quad (n\geq 3),\\
 B(n,n)&=&1,\\
 B(n,n-1)&=&-\binom{n}{2}\quad (n\geq2),\\
 B(n,n-2)&=&4\binom{n}{3}+3\binom{n}{4}\quad (n\geq2),\\
 B(n,n-3)&=&-\frac{(n+2)(n+5)}{2}\binom{n}{4}\quad (n\geq3).
\end{eqnarray*}
\end{corollary}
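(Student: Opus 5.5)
The plan is to read every entry of the corollary directly off the explicit formula \eqref{For-Bnk-S2},
\[
B(n,k)=\sum_{j=k-1}^{n-1}\binom{n-1}{j}(1-n)^{n-1-j}S(j,k-1),
\]
using two different closed forms for $S(j,k-1)$: one suited to small $k$ (the first three lines) and one suited to $k$ close to $n$ (the last four lines).

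\emph{Small $k$.} For $k=1$ only $S(j,0)=\delta_{j,0}$ contributes. This leaves the single term $j=0$, namely $(1-n)^{n-1}=(-1)^{n-1}(n-1)^{n-1}$.

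For $k=2,3$ I would substitute the classical formula $S(j,m)=\frac{1}{m!}\sum_{\nu=0}^{m}(-1)^{m-\nu}\binom{m}{\nu}\nu^j$, which is \eqref{Exp-Gnkx} at $x=0$ combined with \eqref{defGnk}. The $j$-sum can then be extended to all $0\le j\le n-1$, because $S(j,m)=0$ for $j<m$ already makes those terms vanish. The binomial theorem gives
\[
\sum_j\binom{n-1}{j}(1-n)^{n-1-j}\nu^j=(\nu+1-n)^{n-1}=(-1)^{n-1}(n-1-\nu)^{n-1}.
\]
Hence
\[
B(n,k)=\frac{(-1)^{n-1}}{(k-1)!}\sum_{\nu=0}^{k-1}(-1)^{k-1-\nu}\binom{k-1}{\nu}(n-1-\nu)^{n-1}.
\]
Setting $k=2$ and $k=3$ yields the stated expressions, and reproduces the $k=1$ case as a check. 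The conditions $n\ge k$ are only needed so that the convention $B(n,k)=0$ for $k>n$ is not contradicted.

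\emph{Large $k$.} Here I would reindex with $i=n-1-j$:
\[
B(n,n-r)=\sum_{i=0}^{r}\binom{n-1}{i}(1-n)^{i}S(n-1-i,\,n-1-r).
\]
This is a finite sum of at most $r+1$ terms. Each term uses the standard polynomial forms of Stirling numbers near the diagonal:
\[
S(m,m)=1,\qquad S(m,m-1)=\binom m2,\qquad S(m,m-2)=\binom m3+3\binom m4,
\]
\[
S(m,m-3)=\binom m4+10\binom m5+15\binom m6.
\]
For $r=0$ one gets $B(n,n)=1$ immediately. For $r=1$ the sum is
\[
\binom{n-1}{2}-(n-1)^2=-\binom n2.
\]

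The main obstacle is the bookkeeping for $r=2$ and $r=3$. Each is a polynomial identity in $n$ of degree $2r$, and the terms cancel heavily from high degree down to the stated forms, so the algebra needs care. To keep it controlled, I would verify each identity as an equality of polynomials in $n$ of degree at most $2r$. It suffices to check agreement at $2r+1$ values of $n$, and Lehmer's tabulated values (equivalently, direct evaluation of \eqref{For-Bnk-S2}) supply these, for example $B(4,2)=11$ and $B(5,2)=-110$. Alternatively, one can expand symbolically and factor: for $r=3$ the target $-\frac{(n+2)(n+5)}{2}\binom n4$ has roots $n=0,1,2,3$, so checking these roots together with a few further values pins it down.
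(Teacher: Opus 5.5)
Your route is the paper's: every entry is read off \eqref{For-Bnk-S2}. For $k\le3$ you use closed forms of $S(j,k-1)$; your alternating-sum version packages these uniformly and even gives Lehmer's formula \eqref{Exp-Bnk-power} directly from \eqref{For-Bnk-S2}. For $k$ near $n$ you use the near-diagonal polynomials for $S(m,m-d)$, $d\le 3$. Your reindexed sum for $B(n,n-r)$, the cases $r=0,1$, and the degree bound $2r$ are all correct.

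The one real flaw is in the numbers you propose to interpolate against. The values $B(4,2)=11$ and $B(5,2)=-110$ are wrong. By the $B(n,2)$ line you have just proved, $B(4,2)=3^3-2^3=19$ and $B(5,2)=-(4^4-3^4)=-175$. The same values come from the coefficients of $\psi(x)^2/2$, and the paper records them in its $r$-Stirling example.

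These are exactly nodes of your two hard identities, since $B(4,2)=B(n,n-2)|_{n=4}$ and $B(5,2)=B(n,n-3)|_{n=5}$. The targets there are $4\binom43+3\binom44=19$ and $-\frac{7\cdot 10}{2}\binom54=-175$. With your numbers the check would fail, and you would wrongly conclude the statement is false. Take every node value from \eqref{For-Bnk-S2} itself.

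Also note what you are checking at $n=0$. Write $P_r(n)=\sum_{i=0}^{r}\binom{n-1}{i}(1-n)^i f_{r-i}(n-1-i)$, where $f_d(m)$ is the polynomial form of $S(m,m-d)$. At $n=0$ you must evaluate these polynomials at negative arguments, so the node value is not a Comtet number. For $r=3$ this gives $f_3(-1)-f_2(-2)+f_1(-3)-f_0(-4)=6-11+6-1=0$. Together with $P_3(1)=P_3(2)=P_3(3)=0$ and $B(4,1)=-27$, $B(5,2)=-175$, $B(6,3)=-660$, these seven nodes match the degree-$6$ target, so the identity holds.

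For $r=2$ you can avoid interpolation altogether. With $m=n-1$ the sum collapses to $\binom m3+3\binom m4+m^2(m-1)$. This equals $4\binom{m+1}3+3\binom{m+1}4$ after Pascal's rule and a one-line simplification.
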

\begin{proof}
The identities for $B(n,1)$, $B(n,2)$, and $B(n,3)$ follow by setting $k=1,2,3$ in \eqref{For-Bnk-S2}
and using $S(j,1)=1$ and $S(j,2)=2^{j-1}-1$. The near-diagonal identities follow from
\eqref{For-Bnk-S2} together with the diagonal values $S(m,m)=1$, $S(m,m-1)=\binom{m}{2}$,
$S(m,m-2)=\binom{m}{3}+3\binom{m}{4}$, and $S(m,m-3)=\binom{m}{4}+10\binom{m}{5}+15\binom{m}{6}$,
for which we refer to \cite[Chapter~8]{Ch} and \cite[Chapter~6]{GKP}.
\end{proof}
 
The following identity realizes $B(n,k)$ as a special value of the weighted Stirling polynomial
$G_{n,k}(x)$; it is the second-kind counterpart of \eqref{R-bnk-St1}.
\begin{theorem}
For all $n\ge k\ge0$,
\begin{equation}\label{def-Gnk-Com2}
 G_{n,k}(-n) = k!\, B(n + 1, k + 1),
\end{equation}
or, equivalently, $G_{n-1,k-1}(1-n)=(k-1)!\,B(n,k)$ for $n\ge k\ge1$.
\end{theorem}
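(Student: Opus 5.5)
The plan is to compare the explicit formula \eqref{For-Bnk-S2} for $B(n,k)$ with the explicit expression \eqref{defSP} for $G_{n,k}(x)$, evaluated at $x=-n$. An equivalent route goes through generating functions; that version is sketched at the end as a check.

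First replace $(n,k)$ by $(n+1,k+1)$ in \eqref{For-Bnk-S2}. This gives
\[
B(n+1,k+1)=\sum_{j=k}^{n}\binom{n}{j}(-n)^{n-j}S(j,k).
\]
Next, set $x=-n$ in \eqref{defSP} and use $G(j,k)=k!\,S(j,k)$ from \eqref{defGnk}:
\[
G_{n,k}(-n)=\sum_{j=k}^{n}\binom{n}{j}\,k!\,S(j,k)\,(-n)^{n-j}.
\]
Multiplying the first display by $k!$ gives exactly the second, so $G_{n,k}(-n)=k!\,B(n+1,k+1)$ for all $n\ge k$ with $k\ge0$. For $k=0$ we use the convention $B(n,0)=\delta_{n,0}$ only through the statement's range; there $B(n+1,1)=(-1)^n n^n=(-n)^n=G_{n,0}(-n)$, which is consistent.

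The equivalent form follows by the shift $n\to n-1$, $k\to k-1$, valid for $n\ge k\ge1$.

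The only delicate point is that \eqref{For-Bnk-S2} rests on Lehmer's polynomial identity \eqref{Leh-poly}. If we want to avoid that dependence, we can argue directly from \eqref{EGF-Bnk}, treating this as an independent check.

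By \eqref{stipol}, $G_{n,k}(-n)/k!$ is $n!$ times the coefficient of $t^n$ in $e^{-nt}(e^t-1)^k/k!$. Set $u=e^t-1$, so that $e^{-nt}=(1+u)^{-n}$. We also need $B(n+1,k+1)/n!$ as the coefficient of $x^{n+1}$ in $\psi(x)^{k+1}/(k+1)!$, multiplied by $(n+1)$.

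Apply Lagrange inversion to $\psi$, the inverse of $\phi(y)=(1+y)\log(1+y)$:
\[
[x^{n+1}]\psi^{k+1}=\frac{k+1}{n+1}\,[y^{n-k}]\Bigl(\frac{y}{\phi(y)}\Bigr)^{n+1}.
\]
Substitute $1+y=e^{t}$, so $y=e^t-1$ and $\phi(y)=te^t$. The coefficient extraction becomes a residue in $t$, and the Jacobian $dy=e^t\,dt$ produces a residue of $e^{-nt}(e^t-1)^{k}\,t^{-n-1}$. That residue is exactly $[t^n]\,e^{-nt}(e^t-1)^k$.

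Keeping track of the constants $k+1$ and $n+1$ then recovers \eqref{def-Gnk-Com2}. I expect this bookkeeping of the Lagrange inversion constants to be the main place where errors could occur. The primary proof, however, is the short coefficient comparison given first.
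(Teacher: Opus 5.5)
Your primary argument is the paper's own proof. You shift \eqref{For-Bnk-S2} by $(n,k)\mapsto(n+1,k+1)$ and match it term by term with \eqref{defSP} at $x=-n$ using $G(j,k)=k!\,S(j,k)$, and you confirm $k=0$ via $G_{n,0}(-n)=(-n)^n=B(n+1,1)$. Your Lagrange-inversion check is also sound, since the constants combine to give exactly $k!\,B(n+1,k+1)=n!\,[t^n]\,e^{-nt}(e^t-1)^k$. It has the added merit of not depending on Lehmer's identity \eqref{Leh-poly}, which the paper quotes rather than proves.
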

\begin{proof}
Up to the factor $(k-1)!$, the right-hand side of \eqref{For-Bnk-S2} is precisely the expansion
\eqref{defSP} of the weighted Stirling polynomial $G_{n-1,k-1}(x)$ evaluated at $x=1-n$. Thus
$B(n,k)$ is a special value of $G_{n-1,k-1}(x)$, namely $G_{n-1,k-1}(1-n)=(k-1)!\,B(n,k)$; replacing
$(n,k)$ by $(n+1,k+1)$ yields \eqref{def-Gnk-Com2} for $k\ge1$. The case $k=0$ holds as well: by
\eqref{defGnraa} one has $G_{n,0}(x)=x^{n}$, so the left-hand side is $(-n)^{n}$, while
$B(n+1,1)=(-1)^{n}n^{n}=(-n)^{n}$ by the first identity of the preceding corollary.
\end{proof}

Another expression for $B(n,k)$ is given by the following formula. 
\begin{theorem}
    \begin{equation}
        B(n,k)= \sum_{j=k}^{n} \binom{n-1}{j-1}(-n)^{n-j}S(j,k). \label{For-2-Bnk-S2}
    \end{equation}
\end{theorem}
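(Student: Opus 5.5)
Our plan is to derive the claimed formula \eqref{For-2-Bnk-S2} from the realization $G_{n-1,k-1}(1-n)=(k-1)!\,B(n,k)$ in \eqref{def-Gnk-Com2}. We will not use the expansion \eqref{defSP} centred at $x=0$, which gives \eqref{For-Bnk-S2}. Instead we will expand about a point that produces the weight $(-n)^{n-j}$ together with $S(j,k)$ rather than $S(j,k-1)$. The natural route is to raise the lower index by one, so that Stirling numbers $S(j,k)$ appear. To do this we use the difference formula \eqref{D-Gnr}, $\D G_{n-1,k-1}(x)=G_{n-1,k}(x)$, or alternatively the recurrence \eqref{RR-Gnkx}.

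Concretely, we write $(k-1)!\,B(n,k)=G_{n-1,k-1}(1-n)$. We then use \eqref{RR-Gnkx} with $(n,k)$ replaced by $(n-1,k)$:
\[
G_{n,k}(x)=(x+k)G_{n-1,k}(x)+kG_{n-1,k-1}(x).
\]
Setting $x=-n$ would not quite match. A cleaner idea is to relate $G_{n-1,k-1}$ to $G_{n,k}$ via the differentiation formula \eqref{DFGnkx} combined with the generating function. We have
\[
\sum_n G_{n,k}(x)\frac{t^n}{n!}=e^{xt}(e^t-1)^k .
\]
Differentiating in $t$ gives
\[
G_{n+1,k}(x)=xG_{n,k}(x)+k\bigl(G_{n,k-1}(x+1)\bigr),
\]
since $\frac{d}{dt}(e^t-1)^k=k e^t(e^t-1)^{k-1}$ and $e^{(x+1)t}(e^t-1)^{k-1}$ generates $G_{n,k-1}(x+1)$. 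Evaluating at $x=-n$ with $n\to n-1$, that is, in the form $G_{n,k}(1-n-1)\cdots$, we will choose the indices so that
\[
kG_{n-1,k-1}(1-n)=G_{n,k}(-n)+nG_{n-1,k}(-n).
\]
This holds because $G_{n,k}(x)=xG_{n-1,k}(x)+kG_{n-1,k-1}(x+1)$ at $x=-n$.

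Next we expand both $G_{n,k}(-n)$ and $G_{n-1,k}(-n)$ via \eqref{defSP} with $G(j,k)=k!S(j,k)$. This gives
\[
k!\,B(n,k)=k!\sum_j\Bigl[\binom{n}{j}+n\binom{n-1}{j}\frac{1}{-n}\Bigr](-n)^{n-j}S(j,k).
\]
Here the second term uses $n(-n)^{n-1-j}=-(-n)^{n-j}$. The bracket simplifies by Pascal's rule, $\binom{n}{j}-\binom{n-1}{j}=\binom{n-1}{j-1}$, which yields exactly \eqref{For-2-Bnk-S2} after dividing by $k!$.

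The main obstacle is getting the shifted identity $G_{n,k}(x)=xG_{n-1,k}(x)+kG_{n-1,k-1}(x+1)$ right, including the argument shift and signs. We will verify it directly from \eqref{stipol}. Alternatively, it follows by combining \eqref{RR-Gnkx} with \eqref{D-Gnr}, since $G_{n-1,k-1}(x+1)=G_{n-1,k-1}(x)+G_{n-1,k}(x)$. The remaining steps are bookkeeping of summation ranges: the $j=n$ term and the $j<k$ terms vanish. The case $k=0$ is excluded, since $k\ge1$.
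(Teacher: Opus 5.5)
Your argument is correct, and it takes a different route from the paper.

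The paper does not use $G_{n,k}$ for this formula. It returns to Lehmer's polynomial identity \eqref{Leh-poly}, expands $(-1)^n(x+n)^{n-1}$ in powers of $x$, and reads off the coefficients in the basis $\binom{x+k}{k}$ via the $J$-expansion \eqref{CoS1kx3}, where $J(j,k)=(-1)^{j+k}k!\,S(j+1,k+1)$. The shift $(j,k)\to(j-1,k-1)$ then gives \eqref{For-2-Bnk-S2}. This mirrors its proof of \eqref{For-Bnk-S2}, which applies the $K$-basis to the shifted identity. So in the paper, the difference between $S(j,k)$ and $S(j,k-1)$ is just a change of binomial basis applied to one and the same identity.

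You instead start from the realization \eqref{def-Gnk-Com2}. It is proved before this theorem, so there is no circularity. You then use $G_{n,k}(x)=xG_{n-1,k}(x)+kG_{n-1,k-1}(x+1)$ at $x=-n$ to raise the lower index from $k-1$ to $k$. You justify this correctly in two ways: from \eqref{stipol}, and from \eqref{RR-Gnkx} together with \eqref{D-Gnr}. You finish with \eqref{defSP} and Pascal's rule. The sign step $n(-n)^{n-1-j}=-(-n)^{n-j}$ and the boundary terms ($j=n$, and $j<k$, and $G_{n-1,n}=0$ when $k=n$) are all handled correctly.

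What your route buys is that \eqref{For-2-Bnk-S2} becomes a formal consequence of \eqref{For-Bnk-S2} inside the weighted-Stirling framework. It needs no second appeal to Lehmer's identity or to the $J$/$K$ coefficient machinery. It also uses the same two tools, the recurrence and the difference formula for $G_{n,k}$, that drive the proof of Theorem~\ref{thm-main-B}. The paper's route needs only \eqref{Leh-poly} and does not depend on the realization at all.

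In a write-up, delete the false starts. These are the aborted attempt with \eqref{RR-Gnkx} at $x=-n$, and the sentence about ``$G_{n,k}(1-n-1)\cdots$''. State the shifted recurrence directly, since the identity you actually use is right.
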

\begin{proof}
We make use of \eqref{Leh-poly}.
  By  applying the formula from \eqref{CoS1kx3}, we can write the coefficient in \eqref{Leh-poly} as
    $$
   (-1)^{k+1}k!B(n,k+1) =\sum_{j=k}^{n-1}(-1)^n\binom{n-1}{j}n^{n-1-j}J(j,k).
    $$
 Simplifying this further, we get
    $$
   B(n,k+1) =\sum_{j=k}^{n-1}(-1)^{n-1-j}\binom{n-1}{j}n^{n-1-j}S(j+1,k+1).
    $$
Finally, shifting the indices from $(j,k)$ to $(j-1,k-1)$ yields the desired result \eqref{For-2-Bnk-S2}.  
\end{proof}
The following equation represents the inversion formula for \eqref{Exp-ff-bnk}:
\begin{equation}\label{Exp-power-ff}
    (x - n)^n = \sum_{k=0}^{n} B(n + 1, k + 1)\, (x)_{k}.
\end{equation}
This identity is obtained by substituting \( x = -n \) into \eqref{defGnraa}, 
applying \eqref{def-Gnk-Com2}, and then replacing \( y \) with \( x \).

The following result  presents  convolution identities for 
$B(n,k)$ and signed Stirling numbers of the first kind.
\begin{proposition}
\begin{eqnarray}
     \sum_{j=k}^nB(n+1,j+1)s(j,k) &=& \binom{n}{k}(-n)^{n-k}.\label{Pow-B-ff-a}\\
\sum_{j=k}^nB(n+1,j+1)s(j+1,k+1) &=& \binom{n}{k}(-n-1)^{n-k}.\label{Pow-B-ff-b}\\
\sum_{k=0}^nB(n+1,k+1)(n+r)_{k} &=& r^n . \label{Pow-B-ff-c}
\end{eqnarray}
\end{proposition}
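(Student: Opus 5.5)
We derive all three identities from the realization \eqref{def-Gnk-Com2}, $G_{j,k}$-type expansions, and the definition \eqref{defGnraa} at $x=-n$. Setting $x=-n$ in \eqref{defGnraa} and using $G_{n,k}(-n)=k!\,B(n+1,k+1)$ gives
\[
(y-n)^n=\sum_{k=0}^{n}B(n+1,k+1)\,(y)_k ,
\]
which is \eqref{Exp-power-ff}.

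For \eqref{Pow-B-ff-c}, we substitute $y=n+r$ into this identity. The left side becomes $r^n$ and the right side is $\sum_k B(n+1,k+1)(n+r)_k$, as claimed.

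For \eqref{Pow-B-ff-a}, we expand each falling factorial by \eqref{defsnk}, $(y)_j=\sum_k s(j,k)y^k$. After interchanging the sums, the coefficient of $y^k$ on the right is $\sum_{j=k}^n B(n+1,j+1)s(j,k)$. By the binomial theorem, the coefficient of $y^k$ in $(y-n)^n$ is $\binom{n}{k}(-n)^{n-k}$. Comparing the two gives \eqref{Pow-B-ff-a}.

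For \eqref{Pow-B-ff-b}, we need an expansion producing $s(j+1,k+1)$. By \eqref{IFHRRsnk}, $s(j+1,k+1)=s_{j,k}(-1)$, and \eqref{defsnkx} gives $(y-1)_j=\sum_k s_{j,k}(-1)y^k$. So we replace $y$ by $y-1$ in the identity above, obtaining
\[
(y-1-n)^n=\sum_j B(n+1,j+1)(y-1)_j .
\]
We then expand the right side as $\sum_k\bigl(\sum_j B(n+1,j+1)s(j+1,k+1)\bigr)y^k$. The coefficient of $y^k$ on the left is $\binom{n}{k}(-n-1)^{n-k}$, which yields \eqref{Pow-B-ff-b}.

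The only point needing care is that last step: checking the index ranges and confirming that $s_{j,k}(-1)$ indeed corresponds to the shift $y\mapsto y-1$ in \eqref{defsnkx}, i.e.\ $(-1+y)_j=\sum_k s_{j,k}(-1)y^k$. This holds directly from the definition. Everything else is coefficient comparison of polynomials in $y$, legitimate because both sides are polynomials of degree $n$.
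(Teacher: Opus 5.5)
Your proposal is correct and is essentially the paper's proof. Both start from \eqref{Exp-power-ff}, obtained by setting $x=-n$ in \eqref{defGnraa}, and get \eqref{Pow-B-ff-c} by substituting $n+r$. Your coefficient extraction in $y$, directly and after the shift $y\mapsto y-1$, is the same as the paper's step of applying $D^k/k!$ to get $\binom{n}{k}(x-n)^{n-k}=\sum_{j}B(n+1,j+1)s_{j,k}(x)$ and then evaluating at $x=0$ and $x=-1$, with \eqref{IFHRRsnk} supplying $s_{j,k}(-1)=s(j+1,k+1)$.
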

\begin{proof}
Applying the differentiation formula $\frac{D^k}{k!}$ in \eqref{DFsnkx} to \eqref{Exp-power-ff} yields
 $$ \binom{n}{k}(x-n)^{n-k} =\sum_{j=k}^nB(n+1,j+1)s_{j,k}(x)$$
By setting $x=0$ and $x=-1$, we  derive \eqref {Pow-B-ff-a} and \eqref {Pow-B-ff-b}, using \eqref{IFHRRsnk} for the latter. Additionally, it is straightforward to see that
\eqref {Pow-B-ff-c} follows by substituting $x=n+r$ into \eqref{Exp-power-ff}.
\end{proof}

We present two expressions for $B(n,k)$ in terms of $S(n,j).$
\begin{proposition}
\begin{eqnarray}
  (-1)^{n-k}B(n+1,k+1) &=& \sum_{j=k}^n \frac{j!}{k!}\binom{n-k}{j-k}S(n,j).  \label{Rep-Bnkone} \\
  B(n+1,k+1) &=& \sum_{j=k}^n (-1)^{j-k}\frac{j!}{k!}\binom{n-1+j-k}{j-k}S(n,j). \label{Rep-Bnkone-b}
\end{eqnarray}
\end{proposition}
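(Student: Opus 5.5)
The plan is to start from the realization \eqref{def-Gnk-Com2}, $k!\,B(n+1,k+1)=G_{n,k}(-n)$, and to expand $G_{n,k}$ about the origin using the second addition formula \eqref{AFGnkx2}. With $y=0$ and using $G_{n,j}(0)=G(n,j)=j!\,S(n,j)$ from \eqref{defGnk}, that formula reads
\[
G_{n,k}(x)=\sum_{j=k}^{n} j!\,S(n,j)\binom{x}{j-k}.
\]
Both identities should then follow by evaluating this at a suitable point.

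For \eqref{Rep-Bnkone-b}, I substitute $x=-n$ directly. The negative binomial coefficient is rewritten as
\[
\binom{-n}{j-k}=(-1)^{j-k}\binom{n-1+j-k}{j-k}.
\]
This gives $k!\,B(n+1,k+1)=\sum_{j}(-1)^{j-k}j!\binom{n-1+j-k}{j-k}S(n,j)$. Dividing by $k!$ yields \eqref{Rep-Bnkone-b}.

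For \eqref{Rep-Bnkone}, I first move the evaluation point to a nonnegative integer, so that the binomial coefficients become ordinary ones. This uses the identity $G_{n,k}(x)=(-1)^{n+k}G_{n,k}(-x-k)$ from the proof of the reflection formula \eqref{RecF-Gnkx}. At $x=-n$ it gives $G_{n,k}(-n)=(-1)^{n+k}G_{n,k}(n-k)$. Expanding $G_{n,k}(n-k)$ by the displayed formula at $x=n-k$ gives
\[
k!\,B(n+1,k+1)=(-1)^{n-k}\sum_{j=k}^n j!\binom{n-k}{j-k}S(n,j).
\]
Multiplying by $(-1)^{n-k}/k!$ yields \eqref{Rep-Bnkone}.

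There is no real obstacle here. The only points needing care are the sign bookkeeping in the negative binomial identity and in the reflection step, and the range of summation. Terms with $j<k$ are absent because $\binom{x}{j-k}=0$ for negative lower index. Terms with $j>n$ vanish because $S(n,j)=0$ there.
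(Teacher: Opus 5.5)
Your proof is correct. For \eqref{Rep-Bnkone-b} it matches the paper's. The paper obtains $G_{n,k}(x)=\sum_{j}j!\,S(n,j)\binom{x}{j-k}$ by applying $\Delta^k$ to the Newton expansion $x^n=\sum_j j!\,S(n,j)\binom{x}{j}$, which is the formula you read off from \eqref{AFGnkx2} at $y=0$, and then sets $x=-n$.

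For \eqref{Rep-Bnkone} the routes differ. The paper does not use the reflection formula. It starts from the $K$-basis expansion $x^n=\sum_j(-1)^{n-j}j!\,S(n,j)\binom{x+j-1}{j}$ and applies $\Delta^k$, using $\Delta\binom{x+j-1}{j}=\binom{x+j-1}{j-1}$. This gives $G_{n,k}(x)=\sum_j(-1)^{n-j}j!\,S(n,j)\binom{x+j-1}{j-k}$, which it evaluates at $x=-n$, where $\binom{j-n-1}{j-k}=(-1)^{j-k}\binom{n-k}{j-k}$.

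You instead keep a single expansion and move the evaluation point from $-n$ to $n-k$ via $G_{n,k}(x)=(-1)^{n+k}G_{n,k}(-x-k)$. The two arguments are the same symmetry in different guises. The $K$-expansion is the $G$-expansion at $x=0$ after $x\mapsto -x$, and applying $\Delta^k$ to it gives exactly your reflection step combined with your $G$-expansion.

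Your version is more economical: one master formula serves both identities, and the reflection step is already proved in the paper. It also ties \eqref{Rep-Bnkone} directly to the combinatorics. It literally says $T(n+1,k+1)=G_{n,k}(n-k)/k!$, which is the $r$-Stirling formula \eqref{T-rStirling} expanded in the $\binom{x}{j}$ basis. The paper's version stays inside its basis-expansion framework, using the $K$-basis of \eqref{CoS1kx4}, and never needs the reflection formula.
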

\begin{proof}
    We use the following polynomial identity from \eqref{defKnraa}:
\[
x^{n} = \sum_{j=0}^{n} (-1)^{\,n-j} j!\, S(n, j)\, \binom{x + j - 1}{j}.
\]
Applying the $k$th difference operator $\Delta ^k$  to this equation results in
$$ G_{n,k}(x) = \sum_{j=k}^{n}(-1)^{n-j}j!S(n,j)
    \binom{x+j-1}{j-k}. $$
    By setting $x=-n$ in this equation and using \eqref{def-Gnk-Com2}, we obtain
    $$
    k!B(n+1,k+1) = (-1)^{n-k}\sum_{j=k}^{n}j!S(n,j)
    \binom{n-k}{j-k}. 
    $$ 
    Hence this equation becomes \eqref{Rep-Bnkone}.
    Next, for \eqref{Rep-Bnkone-b}, we derive the following polynomial identity by applying \( \D^k \) to \eqref{defGnraa} with \( x = 0 \) and then replacing \( y \) with \( x \).
    $$ 
    G_{n,k}(x) = \sum_{j=0}^{n-k}(j+k)!S(n,j+k)\binom{x}{j}.
    $$
    By substituting $x=-n$ into this equation and using \eqref{def-Gnk-Com2}, we obtain the desired result in \eqref{Rep-Bnkone-b}.
\end{proof}

\begin{remark}
 {\rm  It is clear from \eqref{Rep-Bnkone} that the sign  of $B(n,k)$
 is determined by the condition
 $$ (-1)^{n-k}B(n,k) >0.$$}
\end{remark}
 
Lehmer \cite[Theorem 10]{Leh} established the following identity through a proof by induction on $n.$
\begin{proposition}
\begin{equation}\label{Exp-Bnk-power}
    (k-1)!B(n,k) =\sum_{\nu=0}^{k-1}(-1)^{n-k-\nu} \binom{k-1}{\nu}(n-1-\nu)^{n-1}.
\end{equation}
   \begin{proof}
       By \eqref{Exp-Gnkx}, replacing $(n,k)$ with $(n-1,k-1)$, setting $x=1-n$, and using
$(1-n+\nu)^{n-1}=(-1)^{n-1}(n-1-\nu)^{n-1}$, we obtain
$$
G_{n-1,k-1}(1-n)=\sum_{\nu=0}^{k-1}(-1)^{n-k-\nu}\binom{k-1}{\nu}
(n-1-\nu)^{n-1}.
$$
The result follows immediately from \eqref{def-Gnk-Com2}.
   \end{proof} 
\end{proposition}
\begin{remark}
{\rm     Using \cite[Theorem 9.1]{QG} and \eqref{Exp-power-ff}, we can  provide an alternative proof of \eqref{Exp-Bnk-power}. To do this, we rewrite \eqref{Exp-power-ff} as
     $$ (x-n)^n =\sum_{k=0}^n k!B(n+1,k+1)\binom{x}{k}.$$
     where the coefficient $ k!B(n+1,k+1)$ is given by 
     $$ k!B(n+1,k+1) =\Delta^k(x-n)^n|_{x=0} =\sum_{j=0}^{k}(-1)^{k-j}\binom{k}{j}(j-n)^{n}.$$
     By replacing $(n,k)$ with $(n-1,k-1),$ we obtain \eqref{Exp-Bnk-power}.}
\end{remark}
The convolution for $B(n,k)$ is provided in the following result, which is the exact counterpart of
the convolution for $b(n,k)$ obtained above.
\begin{proposition}\label{prop:convB}
    \begin{equation}\label{Conv-Bnk}
        \binom{k+j}{k}B(n,k+j) = \sum_{r=k}^{n-j} \binom{n}{r}B (r,k)\, B (n-r,j).
    \end{equation}
\end{proposition}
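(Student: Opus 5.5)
The quickest route is through the exponential generating function \eqref{EGF-Bnk}, since $\psi(x)^{k+j}/(k+j)!$ factors as a product of two such series. Set $F_k(x)=\sum_{n\ge0}B(n,k)x^n/n!=\psi(x)^k/k!$. For $k=0$ this gives $F_0=1$, which agrees with the convention $B(n,0)=\delta_{n,0}$. The proof then has three steps.

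\textbf{Step 1.} For $k,j\ge0$, form the product
\[
F_k(x)F_j(x)=\frac{\psi(x)^{k+j}}{k!\,j!}=\binom{k+j}{k}\frac{\psi(x)^{k+j}}{(k+j)!}=\binom{k+j}{k}F_{k+j}(x).
\]

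\textbf{Step 2.} Compare coefficients of $x^n/n!$ on the two sides. The Cauchy product of two exponential generating functions gives
\[
\binom{k+j}{k}B(n,k+j)=\sum_{r=0}^{n}\binom{n}{r}B(r,k)B(n-r,j).
\]

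\textbf{Step 3.} Trim the range of summation. Since $B(r,k)=0$ for $r<k$ and $B(n-r,j)=0$ for $n-r<j$, only the terms with $k\le r\le n-j$ survive. This is exactly \eqref{Conv-Bnk}. The conventions must be checked at the boundary cases $k=0$ or $j=0$. There $B(r,0)=\delta_{r,0}$ still makes the identity hold, because for $k=0$ the sum collapses to $B(n,j)$.

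Alternatively, the identity can be derived in parallel with the proof for $b(n,k)$, using the convolution \eqref{Con-Gnkx} together with the realization \eqref{def-Gnk-Com2}. This is more delicate. The evaluation point $-n$ depends on $n$, and \eqref{def-Gnk-Com2} shifts both indices, so the indices of \eqref{Con-Gnkx} do not match \eqref{Conv-Bnk} directly. Setting $x=-r$ and $y=-(n-r)$ in \eqref{Con-Gnkx} fails because $x$ and $y$ must be fixed while $r$ varies in the sum. The obstacle in that route is therefore the $n$-dependent evaluation point. This is why I would use the generating-function argument above, which sidesteps the issue entirely. The only care needed is the index bookkeeping and the boundary conventions in Step 3.
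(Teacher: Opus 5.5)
Your proposal is correct and is essentially the paper's proof: multiply the generating functions $\psi(x)^k/k!$ and $\psi(x)^j/j!$, recognize $\binom{k+j}{k}\psi(x)^{k+j}/(k+j)!$, and compare coefficients of $x^n/n!$ via the Cauchy product, with the vanishing conventions trimming the range to $k\le r\le n-j$. Your explanation of why the convolution \eqref{Con-Gnkx} cannot be applied directly also matches the remark the paper makes right after this proposition: the evaluation point $-n$ varies with the summation index.
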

\begin{proof}
By the defining generating function \eqref{EGF-Bnk},
\[
\begin{aligned}
  \Bigl(\sum_{n}B(n,k)\tfrac{x^{n}}{n!}\Bigr)\Bigl(\sum_{n}B(n,j)\tfrac{x^{n}}{n!}\Bigr)
  &=\frac{\psi(x)^{k}}{k!}\cdot\frac{\psi(x)^{j}}{j!}
  =\binom{k+j}{k}\frac{\psi(x)^{k+j}}{(k+j)!}\\
  &=\binom{k+j}{k}\sum_{n}B(n,k+j)\frac{x^{n}}{n!},
\end{aligned}
\]
and the Cauchy product on the left gives \eqref{Conv-Bnk}.
\end{proof}

\begin{remark}
{\rm
The specialization $B(n+1,k+1)=G_{n,k}(-n)/k!$ of \eqref{def-Gnk-Com2} cannot be fed into the
convolution \eqref{Con-Gnkx} for $G_{n,k}(x)$, because the point of evaluation $-n$ depends on the
first index and therefore varies with the summation variable, whereas $x$ and $y$ in
\eqref{Con-Gnkx} are fixed. This is why \eqref{Conv-Bnk} is proved from the generating function
instead. For $b(n,k)$ no such obstruction arises, since there the point of evaluation is the second
index, which is constant along the convolution.
}
\end{remark}

We present a new summation formula for $B(n,k)$, the counterpart of the vanishing property for
$b(n,k)$ established in Proposition~\ref{prop:sum-b-vanish}.
\begin{proposition}
For $n>0$,
    \begin{equation}\label{sum-Bnk}
        \sum_{k=0}^n \frac{n!}{(n-k)!}B(n+1,k+1) =0.
    \end{equation}
\end{proposition}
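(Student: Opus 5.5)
The identity follows at once from \eqref{Pow-B-ff-c}. Note that $\frac{n!}{(n-k)!}=(n)_k$, so the left-hand side of \eqref{sum-Bnk} is
\[
\sum_{k=0}^{n}B(n+1,k+1)\,(n)_k .
\]
This is exactly the left-hand side of \eqref{Pow-B-ff-c} with $r=0$, since $(n+r)_k=(n)_k$ there. That proposition then gives the value $r^n=0^n$, which equals $0$ for $n>0$. This completes the argument.

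As a cross-check, the same conclusion can be reached directly from \eqref{Exp-power-ff}:
\[
(x-n)^n=\sum_{k=0}^{n}B(n+1,k+1)\,(x)_k .
\]
Substituting $x=n$ gives $0^n$ on the left, which vanishes for $n\ge1$. On the right we get $\sum_k B(n+1,k+1)(n)_k$, which is our sum.

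Alternatively, through \eqref{def-Gnk-Com2}, each term equals $\binom{n}{k}G_{n,k}(-n)$. By the Newton expansion \eqref{defGnraa} with $x=-n$ and $y=n$, the sum $\sum_k G_{n,k}(-n)\binom{n}{k}$ equals $(-n+n)^n=0$.

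The only care needed concerns the degenerate ranges. The terms with $k>n$ do not occur, and $(n)_k$ vanishes for no $k$ in the range $0\le k\le n$, so there are no boundary issues. The hypothesis $n>0$ is needed exactly to ensure $0^n=0$; for $n=0$ the sum is $B(1,1)=1$. I expect no real obstacle: the statement is a direct specialization of \eqref{Exp-power-ff}.
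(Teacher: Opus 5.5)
Your proof is correct and is essentially the paper's argument: the paper substitutes $x=-n$, $y=n$ into \eqref{defGnraa} and rewrites each term by \eqref{def-Gnk-Com2}, which is exactly your third argument. Your main route through \eqref{Pow-B-ff-c} with $r=0$, or equivalently \eqref{Exp-power-ff} at $x=n$, is the same computation, since \eqref{Exp-power-ff} is just \eqref{defGnraa} at $x=-n$ rewritten via \eqref{def-Gnk-Com2}.
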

\begin{proof}
Substituting $x=-n$ and $y=n$ into \eqref{defGnraa} gives
\[
\sum_{k=0}^n G_{n,k}(-n)\binom{n}{k}=(-n+n)^n=0\qquad(n>0).
\]
By \eqref{def-Gnk-Com2}, it is evident that  the
left-hand side equals $\sum_{k=0}^n \frac{n!}{(n-k)!}B(n+1,k+1)$, which proves \eqref{sum-Bnk}.
\end{proof}

We  provide several identities for sums associated with  $B(n,k)$, with the first identity \eqref{ev1-exp-HyH-power} given by Lehmer (see \cite[Theorem 6]{Leh}). 

 By substituting $-x$ for $x$ in  \eqref{Leh-poly}, we obtain
  \begin{equation}\label{def-HyH}
    (x+n)^n =\sum_{k=0}^n (-1)^{n-k} B(n+1,k+1)x^{\overline{k}}.
\end{equation} 
Using the fact \cite[Eqn. 48]{Cer2022} that 
\begin{equation}\label{HyH-der-Binom}
    H_{n}^{(x)}= \frac{d}{dx}\binom{x+n-1}{n} =\frac{1}{n!}\frac{d}{dx} x^{\overline{n}},  
\end{equation}
where $ H_{n}^{(x)}$ is the hyperharmonic polynomial (for generalized harmonic numbers and their
Riordan-array treatment, see \cite{CEl}), differentiating this equation yields
\begin{equation}\label{Power-HyH-exp}
    n(x+n)^{n-1} =\sum_{k=1}^n (-1)^{n-k} k! B(n+1,k+1)H_k^{(x)}.
\end{equation} 
\begin{proposition}
    \begin{eqnarray}
       \sum_{k=1}^{n}(-1)^{k}(k-1)!B(n+1,k+1)&=&(-1)^nn^n.\label{nn-Bnk}\\
       \sum_{k=2}^{n}(-1)^k(k-2)!B(n,k)&=&(-1)^n(n-1)^{n-1}.\label{ev1-exp-HyH-power}\\
      \sum_{k=2}^{n}(-1)^k(k-1)!B(n,k)H_{k-1}&=&(-1)^n (n-1)n^{n-2}. \label{ev2-exp-HyH-power}\\
      \sum_{k=1}^{n}(-1)^k(k-1)!B(n,k)&=&(-1)^nn^{n-1}.\label{ev3-exp-HyH-power}\\
      \sum_{k=2}^{n}(-1)^kk(k-2)!B(n,k)&=&(-1)^nn^{n-1}.\label{ev4-exp-HyH-power}
      \end{eqnarray}
      \begin{equation}
          \sum_{k=3}^{n}(-1)^k(k-3)!B(n,k) =(-1)^n\left[(n-1)^{n-1}-(n-2)^{n-1} - (n-1)(n-2)^{n-2}\right].\label{ev5-exp-HyH-power}
      \end{equation}
\end{proposition}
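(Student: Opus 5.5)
The plan is to obtain all six identities by evaluating the two polynomial identities \eqref{def-HyH} and \eqref{Power-HyH-exp}, or their $x$-derivatives, at the small integer points $x=0,1,-1,-2$. At these points the rising factorial $x^{\overline{k}}=x(x+1)\cdots(x+k-1)$ and its derivative take simple factorial values. Afterwards I shift $n\mapsto n-1$ and reindex $k\mapsto k-1$, and use the closed forms $B(n,1)=(-1)^{n-1}(n-1)^{n-1}$ and $B(n,2)$, $B(n,3)$ from the corollary to move boundary terms to the right-hand side.

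\emph{Identities \eqref{nn-Bnk} and \eqref{ev1-exp-HyH-power}.} Differentiate \eqref{def-HyH} in $x$, which gives $n(x+n)^{n-1}$ on the left. At $x=0$ the factor $x$ in $x^{\overline{k}}$ kills all terms of the product rule except the one in which $x$ itself is differentiated, so $\frac{d}{dx}x^{\overline{k}}\big|_{x=0}=(k-1)!$ for $k\ge1$. This yields $n^n=\sum_{k\ge1}(-1)^{n-k}(k-1)!B(n+1,k+1)$, which is \eqref{nn-Bnk}. At $x=-1$ only the factor $(x+1)$ vanishes for $k\ge2$, so the derivative equals $(-1)\cdot1\cdot2\cdots(k-2)=-(k-2)!$, while it equals $1$ for $k=1$. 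I then replace $n$ by $n-1$ and $k$ by $k-1$. The $k=1$ term and the $k\ge2$ terms then combine into $\sum_{k\ge2}(-1)^k(k-2)!B(n,k)$, up to the stated sign, which gives \eqref{ev1-exp-HyH-power}. The only work is the sign bookkeeping.

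\emph{Identities \eqref{ev2-exp-HyH-power}, \eqref{ev3-exp-HyH-power} and \eqref{ev4-exp-HyH-power}.} Setting $x=1$ directly in \eqref{def-HyH} and using $1^{\overline{k}}=k!$ gives $(n+1)^n=\sum_k(-1)^{n-k}k!\,B(n+1,k+1)$; the shift $n\mapsto n-1$, $k\mapsto k-1$ then gives \eqref{ev3-exp-HyH-power}. For \eqref{ev2-exp-HyH-power}, I evaluate \eqref{Power-HyH-exp} at $x=1$. Since $H_k^{(1)}=\frac{d}{dx}\binom{x+k-1}{k}\big|_{x=1}=H_k$, the same shift gives \eqref{ev2-exp-HyH-power}. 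For \eqref{ev4-exp-HyH-power} I write $k(k-2)!=(k-1)!+(k-2)!$. The sum then splits as \eqref{ev3-exp-HyH-power} with its $k=1$ term $-B(n,1)=(-1)^n(n-1)^{n-1}$ removed, plus \eqref{ev1-exp-HyH-power}. The two $(n-1)^{n-1}$ contributions cancel, leaving $(-1)^nn^{n-1}$.

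\emph{Identity \eqref{ev5-exp-HyH-power}.} I differentiate \eqref{def-HyH}, with $n$ replaced by $n-1$, and evaluate at $x=-2$. For $k\ge3$ the factor $(x+2)$ vanishes there, so the derivative is the product of the remaining factors, $(-2)(-1)\cdot1\cdots(k-3)=2(k-3)!$. For $k=1$ the derivative is $1$, and for $k=2$ it is $2x+1=-3$. After reindexing, this expresses $\sum_{k\ge3}(-1)^k(k-3)!B(n,k)$ in terms of $(n-2)^{n-2}$, $B(n,2)$ and $B(n,3)$. Substituting $B(n,2)=(-1)^n\bigl((n-1)^{n-1}-(n-2)^{n-1}\bigr)$ from the corollary should give the bracket $(n-1)^{n-1}-(n-2)^{n-1}-(n-1)(n-2)^{n-2}$. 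I expect this last step to be the main obstacle: it is where the boundary terms and the factor $2$ in $2(k-3)!$ have to line up exactly with the stated right-hand side. If the constants do not match on the first attempt, I would check small $n$ numerically against the corollary's values to locate the sign or factor error.
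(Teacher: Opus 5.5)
Your derivations of \eqref{nn-Bnk}, \eqref{ev2-exp-HyH-power} and \eqref{ev3-exp-HyH-power} are correct. For \eqref{ev3-exp-HyH-power}, evaluating \eqref{def-HyH} directly at $x=1$ is cleaner than the paper's route, which integrates \eqref{Power-HyH-exp} over $[0,1]$ and then restores the $k=1$ term from $B(n,1)$. The split $k(k-2)!=(k-1)!+(k-2)!$ for \eqref{ev4-exp-HyH-power} is exactly the paper's.

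The genuine error is that both of your negative evaluation points are off by one: you forgot that the shift $k\mapsto k-1$ also lowers the factorial. In general $\frac{d}{dx}x^{\overline{k}}\big|_{x=-m}=(-1)^m m!\,(k-1-m)!$ for $k>m$, and this becomes $(-1)^m m!\,(k-2-m)!$ after $k\mapsto k-1$. At $x=-1$ you get
\[
n(n-1)^{n-1}=(-1)^{n-1}B(n+1,2)-\sum_{k=2}^{n}(-1)^{n-k}(k-2)!\,B(n+1,k+1).
\]
After $n\mapsto n-1$, $k\mapsto k-1$, the left side is $(n-1)(n-2)^{n-2}$, not $(n-1)^{n-1}$, and the weights are $(k-3)!$ on $k\ge3$. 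So nothing combines into $\sum_{k\ge2}(-1)^k(k-2)!B(n,k)$. For $n=4$ the shifted identity reads $12=12$, whereas \eqref{ev1-exp-HyH-power} reads $27=27$. Hence your proof of \eqref{ev1-exp-HyH-power} fails as written, and so does your proof of \eqref{ev4-exp-HyH-power}, which uses it. The repair is the paper's: \eqref{ev1-exp-HyH-power} is just \eqref{nn-Bnk} with $n\mapsto n-1$, $k\mapsto k-1$, multiplied by $-1$.

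For the same reason, $x=-2$ is the wrong point for \eqref{ev5-exp-HyH-power}. It produces $(n-1)(n-3)^{n-2}$ on the left and, after reindexing, weights $\pm2(k-4)!$ on $k\ge4$, plus boundary terms in $B(n,2)$ and $B(n,3)$. That is a true identity, but the sum $\sum_{k\ge3}(-1)^k(k-3)!B(n,k)$ never occurs in it. No substitution of the closed forms of $B(n,2)$ and $B(n,3)$ can produce the stated bracket, so the numerical check you planned would not locate a mere sign slip.

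What you need is precisely the $x=-1$ computation displayed above, which you misassigned to \eqref{ev1-exp-HyH-power}. This is how the paper proceeds, via \eqref{Power-HyH-exp} with $k!\,H_k^{(-1)}=-(k-2)!$ for $k\ge2$. Reindexing it gives
\[
\sum_{k=3}^{n}(-1)^k(k-3)!\,B(n,k)=B(n,2)+(-1)^{n-1}(n-1)(n-2)^{n-2}.
\]
Substituting only $B(n,2)=(-1)^n\bigl((n-1)^{n-1}-(n-2)^{n-1}\bigr)$ then gives \eqref{ev5-exp-HyH-power}; $B(n,3)$ is not needed.
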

\begin{proof}
  From \eqref{HyH-der-Binom} one finds $H_k^{(0)}=\tfrac1k$ and $H_k^{(1)}=H_k$, the $k$th harmonic
  number. Setting $x=0$ in \eqref{Power-HyH-exp} gives \eqref{nn-Bnk}, and \eqref{ev1-exp-HyH-power}
  follows from it on replacing $n$ by $n-1$, shifting $k\mapsto k-1$, and multiplying by $(-1)^n$.
  Setting instead $x=1$ gives
  \[
     n(n+1)^{n-1}=\sum_{k=1}^{n}(-1)^{n-k}k!\,B(n+1,k+1)H_k,
  \]
  which yields \eqref{ev2-exp-HyH-power} after the same reindexing.  
   Note also from \eqref{HyH-der-Binom} that $\int_{0}^1 H_k^{(x)}dx =1$ for $k\geq1.$
  By  substituting $x$ with $x-1$ in  \eqref{Power-HyH-exp} 
  and integrating the resulting equation from 0 to $1$, we obtain 
  \begin{equation}\label{Middle-exp}
      \sum_{k=2}^{n}(-1)^k(k-1)!B(n,k)=(-1)^n(n^{n-1}-(n-1)^{n-1})
  \end{equation}
  Substituting the value $B(n,1)=(-1)^{n-1}(n-1)^{n-1}$ into this equation then gives us \eqref{ev3-exp-HyH-power}.
  Next, \eqref{ev4-exp-HyH-power} is obtained by adding \eqref{Middle-exp} and  \eqref{ev1-exp-HyH-power}. 
  Finally, to  verify \eqref{ev5-exp-HyH-power}, we evaluate the expression in \eqref{Power-HyH-exp} at $x=-1.$
  Since $H_{1}^{(-1)} =1$ and $H_{k}^{(-1)} =-\frac{1}{k(k-1)}$ for $k>1$ (see \cite{Cer2022}), so that
  $k!\,H_{k}^{(-1)} =-(k-2)!$ for $k\geq2,$ we obtain
  $$
    n(n-1)^{n-1} =(-1)^{n-1}B(n+1,2) - \sum_{k=2}^n (-1)^{n-k} (k-2)! B(n+1,k+1).
  $$
  Replacing $n$ by $n-1$ and then shifting the summation index $k$ to $k+1$, this becomes
 $$
    \sum_{k=3}^{n}(-1)^k(k-3)!B(n,k) = B(n,2) +(-1)^{n-1}(n-1)(n-2)^{n-2}.
  $$
  Finally, substituting the value of $B(n,2)$ recorded in the corollary following
\eqref{For-Bnk-S2} yields
  \eqref{ev5-exp-HyH-power}.
\end{proof}

Now, we give a combinatorial interpretation of \eqref{ev3-exp-HyH-power} in terms of the unsigned Comtet numbers of the second kind $T(n,k)$, defined by
\[
T(n,k)=(-1)^{\,n-k}B(n,k)
\]
as recorded in A354794 \cite{OEIS354794}. Then \eqref{ev3-exp-HyH-power} becomes
\[
\sum_{k=1}^{n}(k-1)!\,T(n,k)=n^{\,n-1}.
\]

The first interpretation follows naturally from \eqref{EGF-Bnk}:
\[
\sum_{n\ge k}T(n,k)\frac{z^n}{n!}
=\frac{\tilde\psi(z)^k}{k!},
\qquad
\tilde\psi(z)
=
\sum_{\nu\ge1}
(\nu-1)^{\nu-1}
\frac{z^\nu}{\nu!}.
\]
The coefficient $(\nu-1)^{\nu-1}$ counts the endofunctions on a
$(\nu-1)$-element labeled set (see, e.g.,
Flajolet and Sedgewick \cite[Chapter~II]{FlajoletSedgewick}).
Hence $\tilde\psi(z)$ is the exponential generating function for
endofunction blocks.

Moreover, $(k-1)!$ is the number of cyclic orderings of $k$ labeled
objects, so weighting by $(k-1)!$ corresponds to the classical cyclic
construction in the symbolic method
\cite[Chapter~I]{FlajoletSedgewick}. Consequently,
\[
\sum_{k\ge1}\frac{\tilde\psi(z)^k}{k}
=
-\log\!\left(1-\tilde\psi(z)\right).
\]
Since
\[
1-\tilde\psi(z)=e^{-\mathcal T(z)},
\]
we obtain
\[
-\log\!\left(1-\tilde\psi(z)\right)
=
\mathcal T(z),
\]
where
\[
\mathcal T(z)
=
\sum_{n\ge1}
n^{\,n-1}\frac{z^n}{n!}
=
-W(-z)
\]
is the classical tree function satisfying
$\mathcal T=z\,e^{\mathcal T}$
(\cite{Cayley1889,Lambert1758,Corless1996}).

Therefore the weighted sum
\[
\sum_{k=1}^{n}(k-1)!\,T(n,k)
\]
counts cyclic assemblies of endofunction blocks, and the above identity
asserts that these objects are equinumerous with rooted labeled trees
on $n$ vertices, whose number is $n^{\,n-1}$ by Cayley's theorem
\cite{Cayley1889}.

A second interpretation of $T(n,k)$ is provided by the following set-partition formula, recorded
in A354794 \cite{OEIS354794} (N.~Skirrow). We prove it algebraically by means of the reflection formula.
\begin{proposition}
For all $n\ge k\ge1$, the unsigned Comtet number $T(n,k)$ is an $r$-Stirling number of the second
kind in the sense of Broder \cite{Bro}, with $r=n-k$:
\begin{equation}\label{T-rStirling}
T(n,k)=\bracenom{2n-k-1}{n-1}_{n-k},
\qquad\text{equivalently}\qquad
B(n,k)=(-1)^{\,n-k}\bracenom{2n-k-1}{n-1}_{n-k}.
\end{equation}
\end{proposition}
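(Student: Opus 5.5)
We combine the realization \eqref{def-Gnk-Com2} with Broder's formula \eqref{Broder-Gnk} and the reflection formula \eqref{RecF-Gnkx}. The point is that $B$ is a value of $G$ at the \emph{negative} argument $1-n$, while Broder's $r$-Stirling numbers are values of $G$ at the \emph{nonnegative} integer $r$. The reflection formula moves the argument across the centre $-k/2$ and so converts the one into the other.

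First, by \eqref{def-Gnk-Com2} with indices shifted, for $n\ge k\ge1$ we have
\[
(k-1)!\,B(n,k)=G_{n-1,k-1}(1-n).
\]
The reflection formula in the form established in its proof, $G_{N,K}(x)=(-1)^{N+K}G_{N,K}(-x-K)$, with $N=n-1$, $K=k-1$ and $x=1-n$, gives
\[
G_{n-1,k-1}(1-n)=(-1)^{n+k}\,G_{n-1,k-1}\bigl(n-1-(k-1)\bigr)=(-1)^{n-k}\,G_{n-1,k-1}(n-k).
\]
Hence $T(n,k)=(-1)^{n-k}B(n,k)=G_{n-1,k-1}(n-k)/(k-1)!$.

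Next we apply Broder's identity \eqref{Broder-Gnk}, $\frac{1}{K!}G_{N,K}(r)=\bracenom{N+r}{K+r}_{r}$, with $N=n-1$, $K=k-1$ and $r=n-k\ge0$. The indices become $N+r=2n-k-1$ and $K+r=n-1$, so
\[
T(n,k)=\bracenom{2n-k-1}{n-1}_{n-k}.
\]
Multiplying by $(-1)^{n-k}$ gives the equivalent form for $B(n,k)$.

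The only delicate step is the applicability of Broder's identity at $r=0$, which occurs when $k=n$. There the claim reads $T(n,n)=\bracenom{n-1}{n-1}_0=1$, which agrees with $B(n,n)=1$ from the corollary, so this case can also be checked directly. Otherwise $r=n-k$ is a positive integer and \eqref{Broder-Gnk} applies as stated. The remaining work is sign bookkeeping, in particular $(-1)^{(n-1)+(k-1)}=(-1)^{n-k}$.
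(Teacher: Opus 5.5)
Your proof is correct and is essentially the paper's argument: it chains the same three ingredients, namely the realization \eqref{def-Gnk-Com2}, the reflection formula \eqref{RecF-Gnkx} and Broder's relation \eqref{Broder-Gnk}, only run from $B(n,k)$ toward the $r$-Stirling number rather than the reverse. Your separate check of the case $k=n$ (where $r=0$) is a small extra that the paper does not spell out.
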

\begin{proof}
Writing $\bracenom{2n-k-1}{n-1}_{n-k}=\bracenom{(n-1)+r}{(k-1)+r}_{r}$ with
$r=n-k$, the relation \eqref{Broder-Gnk} between the $r$-Stirling numbers and $G_{n,k}(x)$ gives
\[
\bracenom{2n-k-1}{n-1}_{n-k}=\frac{1}{(k-1)!}\,G_{n-1,k-1}(n-k).
\]
The reflection formula \eqref{RecF-Gnkx} yields $G_{n-1,k-1}(n-k)=(-1)^{\,n+k}G_{n-1,k-1}(1-n)$, and
\eqref{def-Gnk-Com2} gives $G_{n-1,k-1}(1-n)=(k-1)!\,B(n,k)$. Hence
\[
\bracenom{2n-k-1}{n-1}_{n-k}=(-1)^{\,n-k}B(n,k)=T(n,k),
\]
which proves \eqref{T-rStirling}.
\end{proof}
Combinatorially, \eqref{T-rStirling} states that $T(n,k)$ counts the set partitions of
$\{1,\dots,2n-k-1\}$ into $n-1$ blocks in which the first $n-k$ elements lie in distinct blocks. For
instance, $T(4,2)=\bracenom{5}{3}_{2}=19$ and $T(5,2)=\bracenom{7}{4}_{3}=175$, so
that $B(4,2)=19$ and $B(5,2)=-175$.

\subsection{Recurrence relations for $b$ and $B$}\label{sec:rec}
In this subsection we establish recurrence relations for both Comtet families, working from the
properties of the weighted Stirling polynomials $s_{n,k}(x)$ and $G_{n,k}(x)$. For $B(n,k)$ we first
prove the main result, Theorem~\ref{thmA} of the introduction, which answers Lehmer's question by
providing a recurrence with a fixed number of terms. We then give a new proof of Comtet's four-term
recurrence \eqref{RR-b-intro} for $b(n,k)$.

We now prove the main result, Theorem~\ref{thmA}.

\begin{theorem}\label{thm-main-B}
  \begin{equation}\label{RR-B}
  kB(n+1,k+1)=B(n,k-1)-(n-k)B(n,k)-B(n+1,k).
\end{equation}
\end{theorem}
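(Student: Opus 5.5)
The plan is to translate every term of \eqref{RR-B} into special values of $G_{n,k}(x)$ via \eqref{def-Gnk-Com2}, and then to recognize the resulting polynomial identity as a combination of just two elementary facts: the difference formula \eqref{D-Gnr} and the recurrence \eqref{RR-Gnkx}. I first treat $k\ge2$ and then check the boundary case $k=1$ separately.

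\emph{Step 1 (translation).} Multiply \eqref{RR-B} by $(k-1)!$. Using $G_{n-1,k-1}(1-n)=(k-1)!\,B(n,k)$ and its shifted forms, the four terms become
\[
(k-1)!\,k\,B(n+1,k+1)=G_{n,k}(-n),\qquad (k-1)!\,B(n+1,k)=G_{n,k-1}(-n),
\]
\[
(k-1)!\,B(n,k)=G_{n-1,k-1}(1-n),\qquad (k-1)!\,B(n,k-1)=(k-1)\,G_{n-1,k-2}(1-n).
\]
The last equality uses $(k-2)!\,B(n,k-1)=G_{n-1,k-2}(1-n)$. So \eqref{RR-B} is equivalent to
\[
G_{n,k}(-n)+G_{n,k-1}(-n)=(k-1)\,G_{n-1,k-2}(1-n)+(k-n)\,G_{n-1,k-1}(1-n).
\]

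\emph{Step 2 (difference formula).} By \eqref{D-Gnr}, $G_{n,k}(x)=\D G_{n,k-1}(x)=G_{n,k-1}(x+1)-G_{n,k-1}(x)$. Taking $x=-n$, the left-hand side collapses to the single value $G_{n,k-1}(1-n)$.

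\emph{Step 3 (recurrence).} Apply \eqref{RR-Gnkx} with $(n,k)$ replaced by $(n-1,k-1)$ and evaluate at $x=1-n$:
\[
G_{n,k-1}(1-n)=\bigl((1-n)+(k-1)\bigr)G_{n-1,k-1}(1-n)+(k-1)\,G_{n-1,k-2}(1-n).
\]
Since $(1-n)+(k-1)=k-n$, this is exactly the right-hand side of the reduced identity, which completes the argument for $k\ge2$.

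\emph{Step 4 (the case $k=1$).} This is the only point needing care. Here $B(n,0)=\delta_{n,0}=0$ for $n\ge1$, and in Step 3 the term $(k-1)G_{n-1,k-2}$ carries the factor $k-1=0$ (equivalently $G_{n-1,-1}:=0$). The same computation therefore goes through with $G_{n,0}(x)=x^n$. The case $k=1$ can also be checked directly from the identity $G_{n,1}(-n)+G_{n,0}(-n)=(1-n)^n$.

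I do not expect a genuine obstacle. The main risk is bookkeeping: the evaluation point in \eqref{def-Gnk-Com2} depends on the first index, so one must confirm that the point $-n$ appears for first index $n$ and the point $1-n$ for first index $n-1$. The argument hinges on the shift by $+1$ in the difference operator converting the former into the latter.
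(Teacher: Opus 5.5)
Your proposal is correct and is essentially the paper's proof. The paper computes $G_{n+1,k}(-n)$ in two ways, via \eqref{RR-Gnkx} at $x=-n$ and via the difference formula \eqref{D-Gnr} at $x=-n-1$, and then shifts $(n,k)\mapsto(n-1,k-1)$; this is exactly your two-way evaluation of $G_{n,k-1}(1-n)$, with the index shift done at the start instead of the end, and your explicit check of $k=1$ (using $B(n,0)=0$ and the vanishing factor $k-1$) covers a boundary case the paper leaves implicit.
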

\begin{proof}
From the recurrence \eqref{RR-Gnkx} and the specialization
\eqref{def-Gnk-Com2}, with $x=-n$, we obtain
\[
G_{n+1,k}(-n)
=
-(n-k)\,k!\,B(n+1,k+1)
+
k!\,B(n+1,k).
\]

On the other hand, the difference relation
\[
G_{n+1,k}(x+1)
=
G_{n+1,k}(x)
+
G_{n+1,k+1}(x),
\]
which follows from the first identity in \eqref{D-Gnr}, gives, upon setting
$x=-n-1$ and using \eqref{def-Gnk-Com2},
\[
G_{n+1,k}(-n)
=
k!\,B(n+2,k+1)
+
(k+1)!\,B(n+2,k+2).
\]

Equating the two expressions for $G_{n+1,k}(-n)$ and dividing by $k!$ yields
\[
B(n+2,k+1)
+
(k+1)B(n+2,k+2)
=
-(n-k)B(n+1,k+1)
+
B(n+1,k).
\]

Replacing $(n,k)$ by $(n-1,k-1)$ gives
\[
kB(n+1,k+1)
=
B(n,k-1)
-
(n-k)B(n,k)
-
B(n+1,k),
\]
which is \eqref{RR-B}.
\end{proof}

In the unsigned normalization $T(n,k)=(-1)^{\,n-k}B(n,k)$, Theorem~\ref{thm-main-B} takes the
nonnegative form \eqref{RR-T-intro}, the recurrence conjectured by Kurkov in A354794 \cite{OEIS354794}.

We next treat $b(n,k)$.
By setting $x=k$ in the recurrence relation \eqref{RRsnkx} for $s_{n,k}(x)$ and using \eqref{R-bnk-St1}, we get
\begin{eqnarray}
    b(n+1,k) &=&s_{n+1,k}(k) = (k-n)s_{n,k}(k)+ s_{n,k-1}(k) \nonumber \\
    &=& (k-n) b(n,k) +s_{n,k-1}(k). \label{RR-b-exp1}
\end{eqnarray}
Next, by substituting $x=1$ and $y=k$ into the addition formula \eqref{AFsnkx2} and using \eqref{R-bnk-St1} again, we have
\begin{eqnarray}
s_{n,k}(k+1) &=& s_{n,k}(k)+ ns_{n-1,k}(k) \nonumber  \\
&=&b(n,k) +nb(n-1,k).\label{RR-b-exp2}
\end{eqnarray}
By combining \eqref{RR-b-exp1} with \eqref{RR-b-exp2} after replacing $k$ by $k-1$ in the latter, we obtain Comtet's four-term recurrence
\[
    b(n+1,k)=nb(n-1,k-1)+b(n,k-1)-(n-k)b(n,k),
\]
which is \eqref{RR-b-intro}.

%\begin{remark}
 % {\rm  The well known recurrence relation for $S(n,k)$ can be obtained by setting $x=0$ in \eqref{RR-Gnkx} and dividing the resulting equation by $k!$:
  %  $$ S({n+1,k})= kS({n,k}) +  S({n,k-1}).$$}
%\end{remark}

\subsection{Touchard-type polynomials and partial Bell polynomial representations}\label{sec:bell}
It is interesting to study the Touchard-type polynomials $\sigma_n(t)$ and $\Sigma_n(t)$ attached to the
two Comtet families. Starting from their differentiation formulas and exponential generating functions, we
derive recurrence and differential--difference relations for these polynomials and, finally,
recover the known realizations of $b(n,k)$ and $B(n,k)$ as special values of the partial Bell
polynomials.

The Bell number, denoted $B_n,$ is defined by the row sums for $S=\{S(n,k)\}:$
$$ B_n =\sum_{k=0}^n S(n,k)$$
Similarly,  as is defined in \cite{Leh}, $\Sigma_n$ denotes the row sums for  $B=\{B(n,k)\}.$ 
That is,
$$ \Sigma_n =\sum_{k=0}^n B(n,k).$$
In terms of the Bell numbers, two explicit expressions for $\Sigma_n$ follow immediately from \eqref{For-Bnk-S2} and  \eqref{For-2-Bnk-S2}.
\begin{proposition}
\begin{eqnarray}
    \Sigma_n &=& \sum_{k=0}^{n-1} \binom{n-1}{k}(1-n)^{n-1-k} B_k. \label{Sigma is a sum} \\
    \Sigma_n &=& \sum_{k=0}^{n} \binom{n-1}{k-1}(-n)^{n-k} B_k.
\end{eqnarray}
\end{proposition}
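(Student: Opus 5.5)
The plan is to sum the explicit formulas \eqref{For-Bnk-S2} and \eqref{For-2-Bnk-S2} over $k$ and interchange the order of summation. Before doing so, I will pin down the range of $k$, since the convention $B(n,0)=\delta_{n,0}$ means the $k=0$ term vanishes for $n\ge1$. So for $n\ge1$ the row sum is $\Sigma_n=\sum_{k=1}^n B(n,k)$, and I will treat that case; the case $n=0$ is degenerate and can be checked or excluded separately.

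For the first identity, substitute \eqref{For-Bnk-S2}:
\[
\Sigma_n=\sum_{k=1}^{n}\sum_{j=k-1}^{n-1}\binom{n-1}{j}(1-n)^{n-1-j}S(j,k-1).
\]
Next interchange the sums, letting $j$ run from $0$ to $n-1$ and $k-1$ run from $0$ to $j$. The inner sum then becomes $\sum_{m=0}^{j}S(j,m)=B_j$, and renaming $j$ as $k$ gives \eqref{Sigma is a sum}.

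For the second identity, substitute \eqref{For-2-Bnk-S2}:
\[
\Sigma_n=\sum_{k=1}^{n}\sum_{j=k}^{n}\binom{n-1}{j-1}(-n)^{n-j}S(j,k).
\]
Swap the sums so that $j$ runs from $1$ to $n$ and $k$ from $1$ to $j$. Because $S(j,0)=0$ for $j\ge1$, the inner sum is exactly $B_j$. The $j=0$ term in the stated formula carries the factor $\binom{n-1}{-1}=0$, so adding it does no harm. Renaming $j$ as $k$ gives the second formula.

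The only delicate point is the bookkeeping at the boundary: the $k=0$ and $j=0$ terms must be handled consistently with the convention $B(n,0)=\delta_{n,0}$ and with $S(0,0)=1$. In particular, in the first identity the $j=0$ term contributes $(1-n)^{n-1}B_0$. This term comes from $k=1$, i.e. from $B(n,1)=(-1)^{n-1}(n-1)^{n-1}$, and it does not come from $B(n,0)$. Everything else is a routine interchange of finite sums.
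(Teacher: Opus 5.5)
Your proof is correct and matches the paper, which just says both formulas follow immediately from \eqref{For-Bnk-S2} and \eqref{For-2-Bnk-S2}; summing those over $k$ and interchanging the finite sums, with the boundary bookkeeping you give, is exactly that argument. One small correction: at $n=0$ the first formula is an empty sum while $\Sigma_0=1$, so $n\ge1$ has to be assumed outright rather than ``checked separately.''
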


We define the Touchard-type polynomial for $b(n,k)$ by
$$ \sum_{k=0}^n b(n,k)t^k =\sigma_n(t),$$
and, similarly, $\Sigma_n(t)$ denotes the counterpart for $B(n,k),$ so that
$$ \sum_{k=0}^n B(n,k)t^k=\Sigma_n(t).$$
By the inversion (orthogonality) relation of Lehmer \cite{Leh}, the preceding two equations yield
\begin{equation}\label{Mono-bnk-Bnk}
\sum_{k=0}^n B(n,k)\sigma_k(t) =t^n; \quad
 \sum_{k=0}^n b(n,k)\Sigma_k(t) =t^n.
\end{equation}
In particular, the two families $\sigma_k(t)$ and $\Sigma_k(t)$ each form a basis of ${\mathcal{P}}_n(t).$
\begin{proposition}
For any $f(t)=\sum_{j=0}^n a_jt^j \in {\mathcal{P}}_n(t),$
\begin{eqnarray}
      f(t) &=&\sum_{k=0}^{n} \left(\sum_{j=k}^{n}
      a_j B(j,k) \right)\sigma_k(t); \label{Coskx} \\
      f(t) &=&\sum_{k=0}^{n} \left(\sum_{j=k}^{n}
      a_j b(j,k) \right)\Sigma_k(t).  \label{CoSkx}
 \end{eqnarray}
\end{proposition}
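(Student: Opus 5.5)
The statement is the basis-expansion proposition: every $f\in\mathcal{P}_n(t)$ can be expanded in the $\sigma_k$ basis and in the $\Sigma_k$ basis with the coefficients shown in \eqref{Coskx} and \eqref{CoSkx}. I would prove it the same way the analogous proposition for the four binomial bases was proved: expand each monomial in the new basis, then substitute and swap sums. The needed monomial expansions are exactly \eqref{Mono-bnk-Bnk}, namely
\[
t^j=\sum_{k=0}^{j}B(j,k)\sigma_k(t)\quad\text{and}\quad t^j=\sum_{k=0}^{j}b(j,k)\Sigma_k(t).
\]

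To derive these, substitute $\sigma_k(t)=\sum_{r}b(k,r)t^r$ into the first sum. Interchanging the order of summation gives
\[
\sum_{k=0}^{j}B(j,k)\sigma_k(t)=\sum_{r=0}^{j}t^r\sum_{k=r}^{j}B(j,k)b(k,r).
\]
By the orthogonality relation from the introduction, the inner sum is $\delta_{j,r}$, so the right side is $t^j$. The only care needed is the $k=0$ and $r=0$ boundary terms. Orthogonality is stated with $k$ running from $1$, so I would extend it to $k=0$ using the conventions $b(n,0)=B(n,0)=\delta_{n,0}$. These make every $0$-index term vanish unless $j=r=0$, and in that case $\sigma_0=1$ handles it directly. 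The second expansion follows by the symmetric argument with the roles of $b$ and $B$ swapped.

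Now take $f(t)=\sum_{j=0}^n a_jt^j$ and replace each $t^j$ by its expansion. This gives
\[
f(t)=\sum_{j=0}^{n}a_j\sum_{k=0}^{j}B(j,k)\sigma_k(t).
\]
The index region is $0\le k\le j\le n$, so swapping the sums yields
\[
f(t)=\sum_{k=0}^{n}\Bigl(\sum_{j=k}^{n}a_jB(j,k)\Bigr)\sigma_k(t),
\]
which is \eqref{Coskx}. Doing the same with the second monomial expansion gives \eqref{CoSkx}.

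It remains to justify the basis claim and uniqueness of the coefficients. Since $b(k,k)=B(k,k)=1$ and both families vanish for $k>n$, each of $\sigma_k$ and $\Sigma_k$ is monic of degree exactly $k$. A family of polynomials with one element of each degree $0,1,\dots,n$ is triangular with respect to the monomials and hence a basis of $\mathcal{P}_n(t)$, so the coefficients found above are the unique ones. There is no real obstacle here; the only point requiring attention is the index bookkeeping at $k=0$ noted above.
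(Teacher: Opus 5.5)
Your proposal is correct and follows the paper's route. The paper gets the monomial expansions \eqref{Mono-bnk-Bnk} from Lehmer's orthogonality relations and then substitutes them into $f$ and interchanges the sums, calling the result immediate; your handling of the index $0$ via $b(n,0)=B(n,0)=\delta_{n,0}$ and your triangularity argument for the basis property just make explicit what the paper leaves implicit.
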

\begin{proof}
  It follows immediately from the two expressions in \eqref{Mono-bnk-Bnk}.
\end{proof}
Using $'$ to denote differentiation, we next present recurrence relations for $\sigma_n(t)$ and
$\Sigma_n(t)$ expressed through their derivatives.
\begin{proposition}
\begin{eqnarray}
\sigma_{n+1}(t) &=& nt\sigma_{n-1}(t) +(t-n)\sigma_{n}(t) +t\sigma_{n}'(t); \label{EXp-deri-RR} \\
     \Sigma_{n+1}'(t) &=& (t-n)\Sigma_{n}(t) +t\Sigma_{n}'(t) +(t^{-1}-1)\Sigma_{n+1}(t),
     \label{EXp-deri-RR-b}
\end{eqnarray}
where $t^{-1}\Sigma_{n+1}(t)$ is again a polynomial because $B(n+1,0)=0$.
\end{proposition}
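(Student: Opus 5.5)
Both identities are immediate consequences of the four-term recurrences already proved in Section~\ref{sec:rec}. The plan is to multiply those recurrences by $t^k$, sum over $k$, and recognise each piece as $\sigma_n$, $\Sigma_n$, a shifted copy, or a derivative. Throughout I use the conventions $b(n,0)=B(n,0)=\delta_{n,0}$ and $b(n,k)=B(n,k)=0$ for $k>n$, so that boundary terms can be handled by extending sums freely.

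\emph{First identity \eqref{EXp-deri-RR}.} I start from Comtet's recurrence \eqref{RR-b-intro}, $b(n+1,k)=nb(n-1,k-1)+b(n,k-1)-(n-k)b(n,k)$, multiply by $t^k$ and sum over $k$. The pieces are identified as follows:
\begin{itemize}
\item the left side is $\sigma_{n+1}(t)$;
\item the first term gives $nt\sigma_{n-1}(t)$;
\item the second gives $t\sigma_n(t)$;
\item the last gives $-n\sigma_n(t)+\sum_k k\,b(n,k)t^k=-n\sigma_n(t)+t\sigma_n'(t)$.
\end{itemize}
Collecting these yields $\sigma_{n+1}=nt\sigma_{n-1}+(t-n)\sigma_n+t\sigma_n'$.

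\emph{Second identity \eqref{EXp-deri-RR-b}.} I use Theorem~\ref{thm-main-B}, $kB(n+1,k+1)=B(n,k-1)-(n-k)B(n,k)-B(n+1,k)$, again multiplied by $t^k$ and summed. The right side gives $t\Sigma_n(t)-n\Sigma_n(t)+t\Sigma_n'(t)-\Sigma_{n+1}(t)$. For the left side I write $k=(k+1)-1$:
\[
\sum_k kB(n+1,k+1)t^k=\sum_k (k+1)B(n+1,k+1)t^k-\sum_k B(n+1,k+1)t^k=\Sigma_{n+1}'(t)-t^{-1}\Sigma_{n+1}(t).
\]
The second sum equals $t^{-1}\Sigma_{n+1}(t)$ because $B(n+1,0)=0$ for $n\ge0$. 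Rearranging gives \eqref{EXp-deri-RR-b}.

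\emph{Main obstacle.} The only delicate point is the range of validity at the edges. Theorem~\ref{thmA} is stated for $n\ge k\ge1$, so I must check the terms $k=0$ and $k=n+1$ separately using the conventions.
\begin{itemize}
\item At $k=0$: the left side is $0$ and the right side is $B(n,-1)-nB(n,0)-B(n+1,0)$, which is $0$ for $n\ge1$ since $B(n,0)=B(n+1,0)=0$.
\item At $k=n+1$: the relation reads $(n+1)B(n+1,n+2)=B(n,n)-(-1)B(n,n+1)-B(n+1,n+1)$, i.e.\ $0=1+0-1$, which holds.
\end{itemize}
The same edge checks are needed for Comtet's recurrence: the terms $k=0$ and $k=n+1$ must be verified from $b(n,n)=1$ and the delta conventions, so that summing over all $k$ is legitimate. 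The edge case $n=0$ will be verified directly.
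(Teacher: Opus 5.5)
Your proposal is correct and follows the paper's proof: multiply Comtet's recurrence \eqref{RR-b-intro} and Theorem~\ref{thm-main-B} by $t^k$, sum over $k$, and use $\sum_k kB(n+1,k+1)t^k=\Sigma_{n+1}'(t)-t^{-1}\Sigma_{n+1}(t)$. Your checks of the boundary terms $k=0$ and $k=n+1$, which lie outside the range $n\ge k\ge1$ of Theorem~\ref{thmA} as stated, are correct and fill in a point the paper leaves implicit; the $n=0$ case you defer is equally immediate from $\sigma_0=\Sigma_0=1$ and $\sigma_1(t)=\Sigma_1(t)=t$.
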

\begin{proof}
Multiplying \eqref{RR-b-intro} by $t^{k}$ and summing over $k$, the four terms become
\[
  \sigma_{n+1}(t),\quad nt\,\sigma_{n-1}(t),\quad t\,\sigma_{n}(t),\quad -n\sigma_n(t)+t\sigma_n'(t),
\]
where we used $\sum_{k}kb(n,k)t^{k}=t\sigma_n'(t)$. This is \eqref{EXp-deri-RR}. Doing the same with \eqref{RR-B}, the left-hand side gives
\[
\begin{aligned}
  \sum_{k\ge0}k\,B(n+1,k+1)t^{k}
  &=\sum_{k\ge0}(k+1)B(n+1,k+1)t^{k}-\sum_{k\ge0}B(n+1,k+1)t^{k}\\
  &=\Sigma_{n+1}'(t)-t^{-1}\Sigma_{n+1}(t),
\end{aligned}
\]
while the right-hand side gives
$t\,\Sigma_n(t)-n\Sigma_n(t)+t\Sigma_n'(t)-\Sigma_{n+1}(t)$; rearranging yields
\eqref{EXp-deri-RR-b}.
\end{proof}
\begin{remark}
{\rm By setting $t=1$ in \eqref{EXp-deri-RR-b} and using \eqref{Sigma is a sum}, we get
$$ \Sigma_{n+1}'(1) -\Sigma_{n}'(1) = (1-n)\Sigma_{n} = \sum_{k=0}^{n-1} \binom{n-1}{k}(1-n)^{n-k} B_k,$$
where $\Sigma_{n}$ denotes $\Sigma_{n}(1).$}
\end{remark}
Lehmer showed in \cite[Theorem 1]{Leh} that the exponential generating function of $\s_n(t)$ is
\begin{equation}\label{def-snkt-EGF}
    \sum_{n\geq0}\s_n(t) \frac{x^n}{n!} =(1+x)^{t(1+x)}.
\end{equation}
From this we derive a differential--difference equation for $\s_n(t).$
\begin{proposition}
\begin{equation}\label{EXp-Deri-sigmax}
   \s_n'(t) = n\s_{n-1}(t) + n!\sum_{k=1}^{n-1}
   \frac{(-1)^{k-1}}{k(k+1)}\frac{\s_{n-1-k}(t)}{(n-1-k)!}.
\end{equation}
\end{proposition}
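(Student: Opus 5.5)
Differentiate the generating function \eqref{def-snkt-EGF} with respect to $t$. Write $F(x,t)=(1+x)^{t(1+x)}=\exp\bigl(t(1+x)\log(1+x)\bigr)$. Then
\[
\frac{\partial F}{\partial t}=(1+x)\log(1+x)\,F(x,t),
\]
and the left-hand side is $\sum_{n\ge0}\s_n'(t)\,x^n/n!$. The task is therefore to expand $(1+x)\log(1+x)$ as a power series and read off the coefficient of $x^n/n!$ in the Cauchy product with $\sum_m \s_m(t)x^m/m!$.

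\textbf{Step 1: expand the factor.} Using $\log(1+x)=\sum_{j\ge1}(-1)^{j-1}x^j/j$, we get
\[
(1+x)\log(1+x)=\sum_{j\ge1}\frac{(-1)^{j-1}}{j}x^j+\sum_{j\ge2}\frac{(-1)^{j}}{j-1}x^{j}.
\]
Reindexing with $j=k+1$, the coefficient of $x^{k+1}$ for $k\ge1$ is
\[
\frac{(-1)^k}{k+1}+\frac{(-1)^{k+1}}{k}=(-1)^{k-1}\Bigl(\frac1k-\frac1{k+1}\Bigr)=\frac{(-1)^{k-1}}{k(k+1)},
\]
and the coefficient of $x$ is $1$. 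Hence
\[
(1+x)\log(1+x)=x+\sum_{k\ge1}\frac{(-1)^{k-1}}{k(k+1)}x^{k+1}.
\]
Equivalently, this is $b(n,1)$ read off from \eqref{EGF-bnk} with $k=1$.

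\textbf{Step 2: extract coefficients.} Multiply by $F(x,t)=\sum_m\s_m(t)x^m/m!$ and compare coefficients of $x^n$.

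The term $x$ contributes $\s_{n-1}(t)/(n-1)!$, which after multiplying by $n!$ gives $n\s_{n-1}(t)$.

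The term $x^{k+1}$ pairs with $m=n-1-k$. Since $m\ge0$ forces $k\le n-1$, it contributes
\[
\frac{(-1)^{k-1}}{k(k+1)}\,\frac{\s_{n-1-k}(t)}{(n-1-k)!}.
\]
Multiplying by $n!$ yields exactly the sum in \eqref{EXp-Deri-sigmax}.

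\textbf{Step 3: edge cases.} The only care needed is at the ends of the sum. For $n=0$ both sides vanish, since $\s_0(t)=1$. For $n=1$ the sum is empty and the identity reads $\s_1'(t)=\s_0(t)=1$, which is consistent with $\s_1(t)=b(1,1)t=t$.

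None of these steps is a real obstacle. The one place to take care is the simplification of the coefficient to $(-1)^{k-1}/(k(k+1))$, in particular getting the sign right.
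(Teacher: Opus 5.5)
Your proposal is correct and follows the paper's proof: you differentiate Lehmer's generating function $(1+x)^{t(1+x)}$ with respect to $t$, expand $(1+x)\log(1+x)=x+\sum_{k\ge1}\frac{(-1)^{k-1}}{k(k+1)}x^{k+1}$, and compare coefficients of $x^n$ in the Cauchy product. Your explicit derivation of that coefficient and your check of the cases $n=0,1$ are details the paper leaves implicit.
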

\begin{proof}
    By differentiating \eqref{def-snkt-EGF} with respect to $t$, we get
    $$
    \sum_{n\geq0} \s_n'(t) \frac{x^n}{n!} =D_t (1+x)^{t(1+x)} = (1+x)^{t(1+x)} (1+x)\log (1+x).
    $$
  Using the Taylor expansion of $\log (1+x)$, we find the Taylor expansion of $(1+x)\log (1+x):$
  \begin{equation} \label{Taylor-xlogx}
    (1+x)\log (1+x) = x +\sum_{k\geq 1} \frac{(-1)^{k-1}}{k(k+1)}x^{k+1}.
  \end{equation}
 Combining the two equations above we obtain
 \begin{eqnarray*}
   \sum_{n\geq0} \s_n'(t) \frac{x^n}{n!} &=& \sum_{j\geq0}\s_j(t) \frac{x^j}{j!}
    \left( x +\sum_{k\geq 1} \frac{(-1)^{k-1}}{k(k+1)}x^{k+1}  \right) \\
    &=& \sum_{j\geq0}\s_j(t) \frac{x^{j+1}}{j!}
    + \sum_{j\geq0}\s_j(t) \frac{x^j}{j!} \sum_{k\geq 1} \frac{(-1)^{k-1}}{k(k+1)}x^{k+1}.
 \end{eqnarray*}
 Next, by applying the Cauchy product to the right-hand side and equating the coefficients of $x^n$,
 we obtain the explicit expression for $\s_n'(t)$ given in \eqref{EXp-Deri-sigmax}.
\end{proof}
We present a recurrence relation for $\s_n:=\s_n(1).$
\begin{theorem}
\begin{equation}\label{RR-sigma}
 \s_{n+1} =(1-n)\s_n +2n\s_{n-1} +
n!\sum_{k=1}^{n-1}
   \frac{(-1)^{k-1}}{k(k+1)}\frac{\s_{n-1-k}}{(n-1-k)!}.
\end{equation}
\end{theorem}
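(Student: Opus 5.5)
The plan is to combine the two relations already available for $\sigma_n(t)$ and evaluate them at $t=1$. These are the derivative-recurrence \eqref{EXp-deri-RR},
\[
\sigma_{n+1}(t)=nt\,\sigma_{n-1}(t)+(t-n)\sigma_n(t)+t\,\sigma_n'(t),
\]
and the differential--difference equation \eqref{EXp-Deri-sigmax}, which expresses $\sigma_n'(t)$ through lower-index $\sigma_j(t)$. The idea is to use the second relation to eliminate the derivative term $t\sigma_n'(t)$ from the first. The result is a pure recurrence among the values $\sigma_j(t)$, which can then be specialized to $t=1$.

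Concretely, I would substitute \eqref{EXp-Deri-sigmax} into \eqref{EXp-deri-RR}. This gives
\[
\sigma_{n+1}(t)=nt\,\sigma_{n-1}(t)+(t-n)\sigma_n(t)+nt\,\sigma_{n-1}(t)+t\,n!\sum_{k=1}^{n-1}\frac{(-1)^{k-1}}{k(k+1)}\frac{\sigma_{n-1-k}(t)}{(n-1-k)!}.
\]
Setting $t=1$ and writing $\sigma_j=\sigma_j(1)$ then yields
\[
\sigma_{n+1}=(1-n)\sigma_n+2n\sigma_{n-1}+n!\sum_{k=1}^{n-1}\frac{(-1)^{k-1}}{k(k+1)}\frac{\sigma_{n-1-k}}{(n-1-k)!},
\]
which is \eqref{RR-sigma}.

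Two consistency checks are needed. The first is that $\sigma_n(1)$ coincides with Lehmer's $\sigma_n$ from \eqref{def-L-x2x}. This holds because \eqref{def-snkt-EGF} at $t=1$ is $(1+x)^{1+x}$; equivalently, it is the row-sum identity \eqref{Exp-sigma_n}. The second is the range of validity. Both \eqref{EXp-deri-RR} and \eqref{EXp-Deri-sigmax} should hold for $n\ge1$, so the recurrence holds for $n\ge1$. For $n=1$ the sum is empty and the formula gives $\sigma_2=2\sigma_0=2$, which matches $(1+x)^{1+x}=1+x+x^2+\cdots$, since then $\sigma_2=2!\cdot 1=2$.

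The only step needing care is the index bookkeeping when \eqref{EXp-deri-RR} is used at small $n$, where $\sigma_{n-1}$ must be interpreted suitably. One must confirm that Comtet's recurrence \eqref{RR-b-intro}, from which \eqref{EXp-deri-RR} was derived, holds for all $n\ge1$ under the conventions $b(n,0)=\delta_{n,0}$. This can be checked either directly from the exponential generating function or by verifying the boundary terms $k=0,1$ by hand. Beyond that, the argument is a direct substitution.
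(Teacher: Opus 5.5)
Your proposal is correct and is the paper's own argument: eliminate $\sigma_n'(t)$ between \eqref{EXp-deri-RR} and \eqref{EXp-Deri-sigmax}, then set $t=1$; your substitution correctly yields the $2n\sigma_{n-1}$ term and the sum. Your additional checks (the identification $\sigma_n(1)=\sigma_n$, validity of Comtet's recurrence for all $n\ge1$, and the case $n=1$ giving $\sigma_2=2$) are sound, and the paper leaves them implicit.
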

\begin{proof}
    By equating the two expressions for $\s_n'(t)$ from \eqref{EXp-deri-RR} and \eqref{EXp-Deri-sigmax}
    and evaluating at $t=1$, we obtain \eqref{RR-sigma}.
\end{proof}

We now present alternative forms of the exponential generating functions of $\sigma_n(t)$ and
$\Sigma_n(t)$. Using \eqref{def-snkt-EGF} and \eqref{Taylor-xlogx}, we obtain
\begin{eqnarray}
   \sum_{n\geq0}\sigma_n(t) \frac{x^n}{n!} &=&
   (1+x)^{t(1+x)}= \exp({t(1+x)\log(1+x)}) \nonumber\\ 
   &=& \exp(tx +t\sum_{\nu\geq 1} \frac{(-1)^{\nu-1}}{\nu(\nu+1)}x^{\nu+1}). \label{Exp-EGF-snx}
\end{eqnarray}
Similarly, using \cite[Equations (5) and (6)]{Leh}, we obtain 
\begin{eqnarray}
   \sum_{n\geq0}\Sigma_n(t) \frac{x^n}{n!} &=&
    \sum_{n\geq0}\sum_{k=0}^{n} B(n,k)t^k \frac{x^n}{n!} =\sum_{k\geq0}t^k  \sum_{n\geq0} B(n,k)\frac{x^n}{n!}\nonumber  \\
 &=&
    \sum_{k\geq0}t^k \frac{{\psi(x)}^k}{k!}
    = \exp(t \psi(x)) \nonumber\\
    &=& \exp(t \sum_{\nu\geq1}(-1)^{\nu-1}(\nu-1)^{\nu-1}\frac{x^\nu}{\nu!}). \label{EXp-EGF-Snx}
\end{eqnarray}

By \cite[Equation 3b on p.134]{Comt}, a series $\exp(\sum_{m\geq 1}t_m\frac{x^m}{m!})$ in countably
many variables $t_i$ is expressed through the complete Bell polynomials ${\bf Y}_{n}(t_1,\cdots,t_n)$ as
$$ \exp(\sum_{m\geq 1}t_m\frac{x^m}{m!})
=1 + \sum_{n\geq 1}{\bf Y}_{n}(t_1,\cdots,t_n)\frac{x^n}{n!},$$
with
$${\bf Y}_{n}(t_1,\cdots t_n) = \sum_{k=1}^n {\bf B}_{n,k}(t_1, \cdots, t_{n-k+1}).$$
%where for $1\leq \nu \leq n-k+1,$
%$$ t_\nu = (-1)^{\nu}(\nu-1)^{\nu-1}\frac{1}%{\nu!}t.$$

The partial Bell polynomials ${\bf B}_{n,k}$ are explicitly given by 
\[
\begin{aligned}
{\bf B}_{n,k}(t_1, \cdots, t_{n-k+1})
&=\sum \frac{n!}{c_1!\,c_2!\cdots c_{n-k+1}!\;(1!)^{c_1}(2!)^{c_2}\cdots\bigl((n-k+1)!\bigr)^{c_{n-k+1}}}\\
&\quad\times t_1^{c_1}t_2^{c_2} \cdots t_{n-k+1}^{c_{n-k+1}},
\end{aligned}
\]
where the sum runs over all non-negative integers $c_1,c_2,\dots,c_{n-k+1}$ such that
$$ c_1+2c_2+ \cdots +(n-k+1)c_{n-k+1}=n ;\quad c_1+c_2+\cdots +c_{n-k+1}=k.$$
For $1\leq \nu \leq n-k+1,$ by choosing either $$
t_\nu =(-1)^{\nu}(\nu-2)!\,t \quad (\nu\geq 2);\quad t_1=t$$
or
$$ t_\nu = (-1)^{\nu-1}(\nu-1)^{\nu-1}t$$ from \eqref{Exp-EGF-snx} and \eqref{EXp-EGF-Snx} respectively,
we recover the following representations.
\begin{proposition}\label{thm-bell}
\begin{eqnarray*}
{\bf B}_{n,k}(1,\, 1,\, -1,\, 2,\, \cdots,\, (-1)^{n-k+1}(n-k-1)!)  &=&b(n,k); \\
   {\bf B}_{n,k}(1,\, -1,\, 4,\, -27,\, \cdots,\, (-1)^{n-k}(n-k)^{n-k})  &=&B(n,k),
\end{eqnarray*}
\end{proposition}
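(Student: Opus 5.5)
The plan is to compare the exponential generating functions in $x$ and read off coefficients of $t^k$. The comparison uses the Bell-polynomial framework just recalled: for fixed $k$, the coefficient of $t^k$ in $\exp\bigl(\sum_{m\ge1}t_m x^m/m!\bigr)$, with each $t_m$ carrying one factor of $t$, is
\[
\sum_{n\ge k}{\bf B}_{n,k}(t_1,\dots,t_{n-k+1})\frac{x^n}{n!}
=\frac{1}{k!}\Bigl(\sum_{m\ge1}t_m\frac{x^m}{m!}\Bigr)^{k}
\]
(now with $t$ set to $1$). This is the standard partial-Bell identity underlying Comtet's formula quoted above. The proof therefore reduces to identifying the inner series in each case with $(1+x)\log(1+x)$ and with $\psi(x)$ respectively, and then invoking \eqref{EGF-bnk} and \eqref{EGF-Bnk}.

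For the first identity, I expand \eqref{Taylor-xlogx} and write $(1+x)\log(1+x)=\sum_{m\ge1}u_m x^m/m!$. Then $u_1=1$, and for $m=\nu+1\ge2$,
\[
u_m=\frac{(-1)^{\nu-1}(\nu+1)!}{\nu(\nu+1)}=(-1)^{m}(m-2)!.
\]
This gives the sequence $1,1,-1,2,-6,\dots$, with last entry $u_{n-k+1}=(-1)^{n-k+1}(n-k-1)!$, matching the stated arguments. It is also consistent with the choice $t_\nu=(-1)^\nu(\nu-2)!\,t$ already made in the text.

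Hence $\sum_n {\bf B}_{n,k}(u_1,\dots)\,x^n/n!=[(1+x)\log(1+x)]^k/k!$. By \eqref{EGF-bnk} this is $\sum_n b(n,k)x^n/n!$. Comparing coefficients gives the claim, with the conventions $b(n,0)=\delta_{n,0}$ and $b(n,k)=0$ for $k>n$ matching ${\bf B}_{0,0}=1$ and ${\bf B}_{n,k}=0$ for $k>n$.

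For the second identity, $\psi(x)=\sum_{\nu\ge1}(-1)^{\nu-1}(\nu-1)^{\nu-1}x^\nu/\nu!$ by definition. The coefficient sequence is therefore $v_\nu=(-1)^{\nu-1}(\nu-1)^{\nu-1}$, i.e. $1,-1,4,-27,\dots$ (using $0^0=1$), with last entry $v_{n-k+1}=(-1)^{n-k}(n-k)^{n-k}$. Applying the same identity together with \eqref{EGF-Bnk} gives ${\bf B}_{n,k}(v_1,\dots,v_{n-k+1})=B(n,k)$.

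Equivalently, one can work directly from \eqref{Exp-EGF-snx} and \eqref{EXp-EGF-Snx}. Extracting the coefficient of $t^k x^n/n!$ from $\exp(t\sum u_m x^m/m!)$ gives exactly ${\bf B}_{n,k}(u)$, since ${\bf B}_{n,k}$ is homogeneous of degree $k$. It also gives $b(n,k)$ by the definition of $\sigma_n(t)$, and likewise $B(n,k)$ from $\Sigma_n(t)$.

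The only step needing care is the index bookkeeping. One has to check that the general term of \eqref{Taylor-xlogx}, rescaled by $m!$, equals $(-1)^m(m-2)!$ with the right sign, and that the final listed argument in each case is indexed by $n-k+1$. The rest is the standard coefficient comparison.
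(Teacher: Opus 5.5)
Your proposal is correct and is essentially the paper's argument. The paper substitutes $t_\nu=t\,u_\nu$ into the exponential forms \eqref{Exp-EGF-snx} and \eqref{EXp-EGF-Snx} of the generating functions of $\sigma_n(t)$ and $\Sigma_n(t)$ and reads off the coefficient of $t^k$ (your ``equivalently'' paragraph), and its remark identifies this with Comtet's fundamental formula applied to \eqref{EGF-bnk} and \eqref{EGF-Bnk} (your main route); your values $u_m=(-1)^m(m-2)!$ for $m\ge2$ and $v_\nu=(-1)^{\nu-1}(\nu-1)^{\nu-1}$ are right, and when $k=n$ the argument list reduces to $u_1=v_1=1$.
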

\begin{remark}
\normalfont
Proposition~\ref{thm-bell} is included for completeness; in each identity the argument list contains
$n-k+1$ entries and reduces to the single value $1$ when $k=n$. It is Comtet's fundamental formula
\cite[Th\'eor\`eme A, (3a), p.~134]{Comt} applied to \eqref{EGF-bnk} and \eqref{EGF-Bnk}. Both
evaluations are recorded in the \emph{On-Line Encyclopedia of Integer Sequences} as the Bell
transforms of $(-1)^{n-1}(n-1)!$ and of $(-n)^n$ respectively (P.~Luschny, 16 January 2016, in
A008296 \cite{OEIS8296} and A039621 \cite{OEIS39621}); the unsigned array A354794
\cite{OEIS354794} is defined there as the Bell transform of $\{m^m\}_{m\ge0}$.
\end{remark}

% =====================================================================
\appendix
\setcounter{equation}{0}

\section{Proofs of the identities of Section~\ref{sec:wsp}}\label{app:proofs}

The proofs of the identities in Theorems~\ref{BPsnkx} and~\ref{BPGnkx} and in
Proposition~\ref{prop:GJKH} are collected here to keep the paper self-contained. They rest on the
two expansions
\begin{equation}\label{A-mother}
   \sum_{n\ge0}(x+y)_n\frac{t^n}{n!}=(1+t)^{x+y},
   \qquad
   \sum_{n\ge0}(x+y)^n\frac{t^n}{n!}=e^{(x+y)t},
\end{equation}
the first being Newton's binomial series and the second the exponential series, together with the
Vandermonde convolution \cite[Chapter~5]{GKP}: for indeterminates $x,y$ and integers $n,j\ge0$,
\begin{equation}\label{A-vdm-ff}
   (x+y)_n=\sum_{i=0}^{n}\binom{n}{i}(x)_{n-i}(y)_i,
\end{equation}
\begin{equation}\label{A-vdm-bin}
   \binom{x+y}{j}=\sum_{i=0}^{j}\binom{x}{\,j-i\,}\binom{y}{i}.
\end{equation}
Throughout we use that $\{y^k\}_{k\ge0}$ and $\bigl\{\binom{y}{k}\bigr\}_{k\ge0}$ are each a basis of
$\mathbb{C}[y]$, so that comparing coefficients with respect to either family is legitimate.

% ---------------------------------------------------------------------
\subsection{The first kind}\label{app:first}

\begin{proof}[Proof of Theorem~\ref{BPsnkx}]
We begin the proof by recalling the defining expansion \eqref{defsnkx}, $(x+y)_n=\sum_{k}s_{n,k}(x)y^{k}$, together with the equalities above.

\smallskip
\noindent\emph{(v) Double generating function.} Substituting \eqref{defsnkx} into the first identity
of \eqref{A-mother} gives \eqref{DGFsnkx} at once.

\smallskip
\noindent\emph{(iv) Exponential generating function.} Writing
\[
  (1+t)^{x+y}=(1+t)^{x}e^{\,y\ln(1+t)}=(1+t)^{x}\sum_{k\ge0}\frac{\bigl(\ln(1+t)\bigr)^{k}}{k!}\,y^{k}
\]
and comparing the coefficient of $y^{k}$ in \eqref{DGFsnkx} gives \eqref{EGFsnkx}.

\smallskip
\noindent\emph{(iii) Recurrences.} From $(x+y)_{n+1}=(x+y-n)(x+y)_n$,
\[
  \sum_{k}s_{n+1,k}(x)y^{k}
  =(x-n)\sum_{k}s_{n,k}(x)y^{k}+\sum_{k}s_{n,k}(x)y^{k+1};
\]
comparing the coefficients of $y^{k}$ gives \eqref{RRsnkx}, and setting $x=0$ there gives
\eqref{RRsnk}.

\smallskip
\noindent\emph{(i) Addition formulas.} Apply \eqref{defsnkx} first with the pair $(x+y,\,z)$ and
then with the pair $(y,\,x+z)$:
\[
\begin{aligned}
  \sum_{k}s_{n,k}(x+y)z^{k}
  &=\bigl((x+y)+z\bigr)_n
  =\bigl(y+(x+z)\bigr)_n
  =\sum_{j}s_{n,j}(y)(x+z)^{j}\\
  &=\sum_{j}s_{n,j}(y)\sum_{k}\binom{j}{k}x^{\,j-k}z^{k}.
\end{aligned}
\]
Comparing the coefficients of $z^{k}$ gives \eqref{AFsnkx}. For \eqref{AFsnkx2}, apply
\eqref{A-vdm-ff} to the pair $(x,\,y+z)$ and then expand $(y+z)_j$ by \eqref{defsnkx}:
\[
  \sum_{k}s_{n,k}(x+y)z^{k}
  =\bigl(x+(y+z)\bigr)_n
  =\sum_{j}\binom{n}{j}(x)_{n-j}(y+z)_j
  =\sum_{j}\binom{n}{j}(x)_{n-j}\sum_{k}s_{j,k}(y)z^{k},
\]
and comparing the coefficients of $z^{k}$ gives \eqref{AFsnkx2}.

\smallskip
\noindent\emph{(ii) Explicit expressions.} Setting $y=0$ in \eqref{AFsnkx} and in \eqref{AFsnkx2},
and using $s_{n,k}(0)=s(n,k)$, gives \eqref{EEsnk1} and \eqref{EEsnk2} respectively.

\smallskip
\noindent\emph{(vi) Convolution.} By \eqref{EGFsnkx}, with $L=\ln(1+t)$,
\[
  \Bigl(\sum_{n}s_{n,k}(x)\tfrac{t^{n}}{n!}\Bigr)
  \Bigl(\sum_{n}s_{n,j}(y)\tfrac{t^{n}}{n!}\Bigr)
  =(1+t)^{x+y}\frac{L^{k+j}}{k!\,j!}
  =\binom{k+j}{k}\sum_{n}s_{n,k+j}(x+y)\frac{t^{n}}{n!},
\]
and the Cauchy product on the left gives \eqref{CFsnkx}.
\end{proof}

% ---------------------------------------------------------------------
\subsection{The second kind}\label{app:second}

In this subsection we establish Proposition~\ref{prop:KJH-G}, Theorem~\ref{BPGnkx}, and
Proposition~\ref{prop:GJKH} in this order, as the proof of Proposition~\ref{prop:GJKH} relies on both
the substitutions given in Proposition~\ref{prop:KJH-G} and the identities established in
Theorem~\ref{BPGnkx}. We therefore begin with the substitutions expressing $J$, $K$, and $H$ in terms
of $G$.

\begin{proof}[Proof of Proposition~\ref{prop:KJH-G}]
Since
$\binom{-y+k-1}{k}=\frac{(-y+k-1)(-y+k-2)\cdots(-y)}{k!}=(-1)^{k}\binom{y}{k}$,
replacing $(x,y)$ by $(-x,-y)$ in \eqref{defKnraa} gives
\[
  (-1)^{n}(x+y)^{n}=\sum_{k}(-1)^{k}K_{n,k}(-x)\binom{y}{k},
  \qquad\text{i.e.}\qquad
  (x+y)^{n}=\sum_{k}(-1)^{\,n+k}K_{n,k}(-x)\binom{y}{k};
\]
comparison with \eqref{defGnraa} gives \eqref{R-KnrGnr}. Next, $\binom{y+k}{k}=\binom{(y+1)+k-1}{k}$,
so applying \eqref{defKnraa} to the pair $(x-1,\,y+1)$ yields
$(x+y)^{n}=\sum_{k}K_{n,k}(x-1)\binom{y+k}{k}$; comparison with \eqref{defJnraa} gives
$J_{n,k}(x)=K_{n,k}(x-1)$, which combined with \eqref{R-KnrGnr} is \eqref{R-JnrGnr}. Finally,
applying \eqref{defGnraa} to the pair $(x+1,\,y-1)$ yields
$(x+y)^{n}=\sum_{k}G_{n,k}(x+1)\binom{y-1}{k}$, and comparison with \eqref{defHnraa} gives
\eqref{R-HnrGnr}.
\end{proof}

\begin{proof}[Proof of Theorem~\ref{BPGnkx}]
Recall the defining expansion \eqref{defGnraa}, $(x+y)^{n}=\sum_{k}G_{n,k}(x)\binom{y}{k}$.

\smallskip
\noindent\emph{(v) Double generating function.} Substituting \eqref{defGnraa} into the second
identity of \eqref{A-mother} gives \eqref{DGFGnkx}.

\smallskip
\noindent\emph{(iv) Exponential generating function.} Since $e^{t}-1$ has zero constant term,
Newton's binomial series applies to it, and
\[
  e^{(x+y)t}=e^{xt}\bigl(1+(e^{t}-1)\bigr)^{y}
            =e^{xt}\sum_{k\ge0}\binom{y}{k}\bigl(e^{t}-1\bigr)^{k}.
\]
Comparing the coefficients of $\binom{y}{k}$ in \eqref{DGFGnkx} gives \eqref{stipol}.

\smallskip
\noindent\emph{(ii) Explicit expressions.} Expanding
$(e^{t}-1)^{k}=\sum_{\nu=0}^{k}(-1)^{\,k-\nu}\binom{k}{\nu}e^{\nu t}$ in \eqref{stipol} gives
$e^{xt}(e^{t}-1)^{k}=\sum_{\nu}(-1)^{\,k-\nu}\binom{k}{\nu}e^{(x+\nu)t}$, and the coefficient of
$t^{n}/n!$ is \eqref{Exp-Gnkx}. Taking instead the Cauchy product of $e^{xt}$ with
$\sum_{n}G(n,k)t^{n}/n!=(e^{t}-1)^{k}$, which is \eqref{stipol} at $x=0$, gives \eqref{defSP}.

\smallskip
\noindent\emph{(iii) Recurrences.} Differentiating \eqref{stipol} with respect to $t$ and writing
$e^{t}=(e^{t}-1)+1$ in the second term,
\[
  \partial_t\Bigl[e^{xt}\bigl(e^{t}-1\bigr)^{k}\Bigr]
  =x\,e^{xt}\bigl(e^{t}-1\bigr)^{k}+k\,e^{xt}\bigl(e^{t}-1\bigr)^{k-1}e^{t}
  =(x+k)\,e^{xt}\bigl(e^{t}-1\bigr)^{k}+k\,e^{xt}\bigl(e^{t}-1\bigr)^{k-1}.
\]
Comparing the coefficients of $t^{n}/n!$ gives \eqref{RR-Gnkx}, and setting $x=0$ there gives
\eqref{RR-Gnk}.

\smallskip
\noindent\emph{(i) Addition formulas.} Identity \eqref{AFGnkx} is the Cauchy product of $e^{xt}$
with $e^{yt}(e^{t}-1)^{k}$. For \eqref{AFGnkx2}, apply \eqref{defGnraa} first with the pair
$(x+y,\,z)$ and then with the pair $(y,\,x+z)$, expanding by \eqref{A-vdm-bin}:
\[
  \sum_{k}G_{n,k}(x+y)\binom{z}{k}
  =\bigl((x+y)+z\bigr)^{n}
  =\sum_{j}G_{n,j}(y)\binom{x+z}{j}
  =\sum_{j}G_{n,j}(y)\sum_{k}\binom{x}{\,j-k\,}\binom{z}{k},
\]
and comparing the coefficients of $\binom{z}{k}$ gives \eqref{AFGnkx2}.

\smallskip
\noindent\emph{(vi) Convolution.} Since
$\bigl(e^{xt}(e^{t}-1)^{k}\bigr)\bigl(e^{yt}(e^{t}-1)^{j}\bigr)=e^{(x+y)t}(e^{t}-1)^{k+j}$,
the Cauchy product gives \eqref{Con-Gnkx}.
\end{proof}

\begin{proof}[Proof of Proposition~\ref{prop:GJKH}]
Setting $x=0$ in \eqref{Exp-Gnkx} gives
\[
  G(n,k)=\sum_{\nu=0}^{k}(-1)^{\,k-\nu}\binom{k}{\nu}\nu^{n}=k!\,S(n,k),
\]
the classical formula for the Stirling numbers of the second kind
\cite[Chapter~8]{Ch}, \cite[Chapter~6]{GKP}; this is \eqref{defGnk}, and \eqref{defKnk} follows by
setting $x=0$ in \eqref{R-KnrGnr}.

For the remaining two evaluations we compute $G_{n,k}(1)$. Replacing $k$ by $k+1$ in \eqref{stipol}
at $x=0$ gives $\sum_{m}(k+1)!\,S(m,k+1)\frac{t^{m}}{m!}=(e^{t}-1)^{k+1}$; differentiating with
respect to $t$ and dividing by $k+1$ yields
\[
  \sum_{n\ge0}k!\,S(n+1,k+1)\frac{t^{n}}{n!}=e^{t}\bigl(e^{t}-1\bigr)^{k}
  =\sum_{n\ge0}G_{n,k}(1)\frac{t^{n}}{n!},
\]
the last equality being \eqref{stipol} at $x=1$. Hence $G_{n,k}(1)=k!\,S(n+1,k+1)$, and setting
$x=0$ in \eqref{R-HnrGnr} and in \eqref{R-JnrGnr} gives \eqref{defHnk} and \eqref{defJnk}
respectively.
\end{proof}

%\printnomenclature
%\printindex

\end{document}